\newtheorem{thm}{Theorem}[section]
\newtheorem{lem}[thm]{Lemma}
\newtheorem{prop}[thm]{Proposition}
\newtheorem{thmintro}{Theorem}
\theoremstyle{definition}
\newtheorem{rem}[thm]{Remark}
\newtheorem{cond}[thm]{Condition}
\newcommand{\N}{\mathbb N}
\newcommand{\Z}{\mathbb Z}
\newcommand{\R}{\mathbb R}
\newcommand{\C}{\mathbb C}
\newcommand{\mf}{\mathfrak}
\newcommand{\mc}{\mathcal}
\newcommand{\mb}{\mathbf}
\newcommand{\mh}{\mathbb}
\def\cC{{\mathcal C}}
\def\cL{{\mathcal L}}
\def\cE{{\mathcal E}}
\def\Irr{{\rm Irr}}
\newcommand{\mr}{\mathrm}
\newcommand{\ind}{\mathrm{ind}}
\newcommand{\enuma}[1]{\begin{enumerate}[\textup{(}a\textup{)}] {#1} \end{enumerate}}
\newcommand{\Fr}{\mathrm{Frob}}
\newcommand{\nr}{\mathrm{nr}}
\newcommand{\Rep}{\mathrm{Rep}}
\newcommand{\der}{\mathrm{der}}
\newcommand{\matje}[4]{\left(\begin{smallmatrix} #1 & #2 \\ 
#3 & #4 \end{smallmatrix}\right)}
\newcommand{\Mod}{\mathrm{Mod}}
\newcommand{\Hom}{\mathrm{Hom}}
\newcommand{\End}{\mathrm{End}}
\newcommand{\isom}{\xrightarrow{\sim}}
\newcommand{\sgn}{\mathrm{sgn}}
\newcommand{\pt}{\mathrm{pt}}
\newcommand{\IM}{\mathrm{IM}}
\newcommand{\Zy}{{Z^\circ_{G \times \C^\times} (y)}}
\newcommand{\Modf}[1]{\mathrm{Mod}_{\mathrm{fl}, #1}}
\newcommand{\Ad}{\mathrm{Ad}}
\newcommand{\pr}{\mathrm{pr}}
\newcommand{\IC}{\mathrm{IC}}
\begin{document}

\title[Graded Hecke algebras and the $p$-adic Kazhdan--Lusztig conjecture]{Graded Hecke 
algebras, constructible sheaves and the $p$-adic Kazhdan--Lusztig conjecture}
\date{\today}
\subjclass[2010]{20C08, 14F08, 22E57}
\keywords{geometric representation theory, Hecke algebras, reductive groups, equivariant
constructible sheaves}
\maketitle
\vspace{3mm}

\begin{center}
{\Large Maarten Solleveld} \\[1mm]
IMAPP, Radboud Universiteit Nijmegen\\
Heyendaalseweg 135, 6525AJ Nijmegen, the Netherlands \\
email: m.solleveld@science.ru.nl
\end{center}
\vspace{5mm}

\begin{abstract}
Graded Hecke algebras can be constructed in terms of equivariant cohomology and constructible
sheaves on nilpotent cones. In earlier work, their standard modules and their irreducible
modules where realized with such geometric methods. 

We pursue this setup to study properties of module categories of (twisted) graded Hecke algebras,
in particular what happens geometrically upon formal completion with respect to a central character. 
We prove a version of the Kazhdan--Lusztig conjecture for (twisted) graded Hecke algebras. It 
expresses the multiplicity of an irreducible module in a standard module as the multiplicity of
an equivariant local system in an equivariant perverse sheaf.

This is applied to smooth representations of reductive $p$-adic groups. Under some conditions, we
verify the $p$-adic Kazhdan--Lusztig conjecture from \cite{Vog}. Here the equivariant constructible
sheaves live on certain varieties of Langlands parameters. The involved conditions are checked for
substantial classes of groups and representations.
\end{abstract}

\tableofcontents

\section*{Introduction}

The importance of graded Hecke algebras stems from the multitude of ways in which they can arise:
\begin{itemize}
\item in terms of generators and relations,
\item as degenerations of affine Hecke algebras \cite{Lus-Gr},
\item from harmonic analysis and differential operators on Lie algebras \cite{Che,Opd},
\item from progenerators for representations of reductive $p$-adic groups \cite{SolEnd},
\item from constructible sheaves and equivariant cohomology \cite{Lus-Cusp1,Lus-Cusp2,AMS2},
\item from enhanced Langlands parameters for reductive $p$-adic groups \cite{AMS3}.
\end{itemize}
In the last three settings one naturally encounters slightly more general objects $\mh H$ called 
twisted graded Hecke algebras. The irreducible and standard modules of these algebras were classified 
and constructed geometrically in \cite{Lus-Cusp1,Lus-Cusp2,AMS2}. The main goal of this paper is 
to apply that setup to compute the multiplicity of an irreducible $\mh H$-module in another 
$\mh H$-module (typically a standard module).

Via \cite{SolEnd} the analogous issues for smooth representations of a reductive $p$-adic group
$\mc G (F)$ can be translated to modules over twisted graded Hecke algebras. In good cases, that
can also be related to the geometry of varieties of enhanced Langlands parameters for $\mc G (F)$,
via \cite{AMS3}. That links our goals to versions of the Kazhdan--Lusztig conjecture for 
$p$-adic groups \cite{Vog,Zel}.

We hope that this paper may contribute to the long term project of geometrization and categorification
of the (local) Langlands correspondence, for which we refer to \cite{BCHN, FaSc, Hel, Zhu}.
Roughly speaking, it is expected that a derived category of smooth $\mc G (F)$-representations is
equivalent with some derived category of coherent sheaves on a variety or stack of Langlands parameters.

In our setting the complexes of sheaves are equivariant and constructible. We showed in \cite{SolSGHA}
that the associated derived categories are naturally equivalent with derived categories of differential
graded modules over suitable twisted graded Hecke algebras $\mh H$. In particular, one can never
detect all $\mh H$-modules or all smooth $\mc G (F)$-representations with such sheaves -- most of them
are not graded. Nevertheless our setup could provide a stepping stone to relate more appropriate
sheaves to $\mc G (F)$-representations.\\

\textbf{Main results.}

Let $G$ be a complex reductive algebraic group and let $M$ be a Levi subgroup of $G$. For maximal
generality we allow disconnected versions of $G$ and $M$. Let $q\cE$ be an $M$-equivariant cuspidal
local system on a nilpotent orbit in Lie$(M)$. These data give rise to a (twisted) graded Hecke
algebra $\mh H (G,M,q\cE)$, see \cite[\S 2.1]{SolSGHA}. In the introduction and in most of the paper 
we assume that $N_G (M)$ stabilizes $q\cE$, which by \cite{AMS2} can be done without loss of generality.

Let $\mf g_N$ be the nilpotent cone in
$\mf g = \mr{Lie}(G)$. Via some kind of parabolic induction, one constructs from $q\cE$ a semisimple
complex $K_N \in \mc D^b_{G \times \C^\times} (\mf g_N)$. Here $\mc D^b_{G \times \C^\times}$ denotes
an equivariant bounded derived category of constructible sheaves, from \cite{BeLu}. 
This provides an isomorphism of graded algebras \cite[Theorem 2.2]{SolSGHA}:
\begin{equation}\label{eq:1}
\mh H (G,M,q\cE) \cong \End^*_{\mc D^b_{G \times \C^\times} (\mf g_N)} (K_N) .
\end{equation}
The algebra $\mh H (G,M,q\cE)$ comes with a finite ``Weyl-like" group $W_{q\cE}$ acting on   
$\mf t = \mr{Lie}(Z(M))$. Its centre can be described as
\[
Z( \mh H (G,M,q\cE)) \cong \mc O (\mf t \oplus \C)^{W_q\cE} = 
\mc O (\mf t / W_{q\cE}) \otimes_\C \C [\mb r] .
\]
Via a specific injection $\Sigma_v : \mf t \oplus \C \to \mf m \oplus \C$ (the identity on $\mf t$,
but in general not on $\C$), we can regard $Z( \mh H (G,M,q\cE))$ as a quotient of 
$\mc O (\mf g \oplus \C)^G$. A semisimple element $(\sigma,r) \in \mf g \oplus \C$ determines a unique
central character of $\mh H (G,M,q\cE)$ if it lies in $\mr{Ad}(G) \Sigma_v (\mf t \oplus \C)$, and
otherwise it is irrelevant for $\mh H (G,M,q\cE)$. We fix a relevant $(\sigma,r)$ and we denote the 
corresponding formal completion of $Z(\mh H (G,M,q\cE))$ by $\hat{Z}(\mh H (G,M,q\cE))_{\sigma,r}$. 
In the process of localization, $G \times \C^\times$ will be replaced by 
$Z_G (\sigma) \times \C^\times$ and $\mf g_N$ by 
\[
\mf g_N^{\sigma,r} := \{ y \in \mf g_N : [\sigma,y] = 2 r y \} .
\]
A variation on the construction of $K_N$ yields an object 
$K_{N,\sigma,r} \in \mc D^b_{Z_G (\sigma) \times \C^\times} (\mf g_N^{\sigma,r})$.
Since $(\sigma,r)$ belongs to $\mr{Lie}(Z_G (\sigma) \times \C^\times)$, it defines a character of 
\[
H^*_{Z_G (\sigma) \times \C^\times}(\pt) \cong 
\mc O \big( \mr{Lie}(Z_G (\sigma) \times \C^\times) \big)^{Z_G (\sigma) \times \C^\times} ,
\]
and we can formally complete the latter algebra with respect to $(\sigma,r)$.

\begin{thmintro}\label{thm:A}
(see Theorem \ref{thm:2.5}) \\
There is a natural algebra isomorphism
\begin{multline*}
\hat Z (\mh H (G,M,q\cE))_{\sigma,r} \underset{Z (\mh H (G,M,q\cE))}{\otimes} \mh H (G,M,q\cE) \; \isom \\
\hat{H}^*_{Z_G (\sigma) \times \C^\times} (\pt)_{\sigma,r} \underset{H_{Z_G (\sigma) \times \C^\times}^* 
(\pt)} {\otimes} \End^*_{\mc D^b_{Z_G (\sigma) \times \C^\times}(\mf g_N^{\sigma,r})} (K_{N,\sigma,r}) .
\end{multline*}
This induces an equivalence of categories
\[
\Modf{\sigma ,r} (\mh H (G,M,q\cE)) \cong \Modf{\sigma ,r} \big( \End^*_{\mc D^b_{Z_G (\sigma) \times 
\C^\times}(\mf g_N^{\sigma,r})}(K_{N,\sigma,r}) \big) .
\]
Here $\Modf{\sigma ,r}$ denotes the category of finite length modules all whose irreducible
subquotients admit the central character $(\sigma,r)$.
\end{thmintro}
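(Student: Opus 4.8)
The plan is to establish the algebra isomorphism first and then deduce the categorical equivalence as a formal consequence. The left-hand side of the displayed isomorphism is a base change of $\mh H(G,M,q\cE)$ along the completion map $Z(\mh H) \to \hat Z(\mh H)_{\sigma,r}$; via the isomorphism \eqref{eq:1} this is a base change of $\End^*_{\mc D^b_{G\times\C^\times}(\mf g_N)}(K_N)$ along the central subalgebra $H^*_{G\times\C^\times}(\pt) \to \hat H^*_{G\times\C^\times}(\pt)_{\sigma,r}$ (the central character data match because $Z(\mh H)$ is a quotient of $H^*_{G\times\C^\times}(\pt)$, being $\mc O(\mf t\oplus\C)^{W_{q\cE}} = \mc O((\mathfrak h\oplus\C)/W)^{G}$ in the notation of \cite{AMS2}). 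So the content is to compare the formal completion of $\End^*(K_N)$ near the $G\times\C^\times$-orbit of $(\sigma,r)$ with $\End^*(K_{N,\sigma,r})$ computed on the smaller variety $\mf g_N^{\sigma,r}$ for the smaller group $Z_G(\sigma)\times\C^\times$.

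\textbf{First I would} recall the geometric description of $K_N$ and $K_{N,\sigma,r}$ from \cite{AMS2,SolSGHA}: both are obtained by parabolic induction of $q\cE$, and their endomorphism algebras are computed as hypercohomology of a convolution diagram over the relevant variety of pairs. The key input is a \emph{localization} statement: completing equivariant (co)homology of a $G\times\C^\times$-variety $X$ at a semisimple parameter $(\sigma,r)\in\mr{Lie}(G\times\C^\times)$ recovers the equivariant cohomology, for the group $Z_G(\sigma)\times\C^\times$, of the fixed-point locus $X^{(\sigma,r)}$ — here the $\mr{ad}(\sigma)=2r$ eigenspace condition cuts out $\mf g_N^{\sigma,r}$ from $\mf g_N$. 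This is the same mechanism used by Lusztig in \cite{Lus-Cusp2} and by \cite{AMS2} in the classification of irreducibles; the new point is that it must be applied at the level of the whole $\mr{Ext}$-algebra (all degrees, with its ring structure), not just to individual Ext-groups. I would verify that the convolution product is compatible with localization — this amounts to the base-change/projection-formula compatibilities of equivariant localization, applied to the correspondence defining the convolution — and that the parabolic-induction construction of $K_{N,\sigma,r}$ is precisely the localized version of that of $K_N$.

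\textbf{The hard part will be} handling the fact that the $\C^\times$-action (the ``$\mb r$'' grading) is genuinely used here: one is not completing at a generic semisimple element but at $(\sigma,r)$ with $r$ possibly making extra fixed points appear, and the grading shifts in the convolution algebra interact with the completion. So one must be careful that $\hat H^*_{Z_G(\sigma)\times\C^\times}(\pt)_{\sigma,r}$ is flat over $H^*_{Z_G(\sigma)\times\C^\times}(\pt)$ and that the spectral sequences computing $\End^*$ degenerate compatibly, or at least that the relevant $\mr{Ext}$-sheaves are free over the base so that completion is exact — this is where one leans on the semisimplicity of $K_N$ (a direct sum of shifted IC-sheaves, by the decomposition theorem) and the parity/purity vanishing that makes $\End^*(K_N)$ a free module over $H^*_{G\times\C^\times}(\pt)$. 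Once freeness and exactness of completion are in hand, the isomorphism of algebras follows by identifying the two sides degree by degree as completed free modules with matching convolution structure constants.

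\textbf{Finally}, the equivalence of categories is formal: for a Noetherian algebra $A$ with central subalgebra $Z$ and a point giving a maximal ideal $\mf m \subset Z$, the category of finite-length $A$-modules supported (in the sense of all composition factors) at the central character $\mf m$ is equivalent to the category of finite-length modules over the $\mf m$-adic completion $\hat A_{\mf m} = \hat Z_{\mf m}\otimes_Z A$; apply this on both sides and use the algebra isomorphism just proved. I would state this as a short lemma about completions of finite-type algebras over a Noetherian central subalgebra and then invoke it, noting that $\mh H(G,M,q\cE)$ is indeed finite as a module over its centre (it is finitely generated over $\mc O(\mf t\oplus\C)$, e.g.\ by \cite[\S 2]{AMS2} or \cite{SolSGHA}), so the completion $\hat Z(\mh H)_{\sigma,r}\otimes_{Z(\mh H)}\mh H(G,M,q\cE)$ is itself a finite-dimensional algebra and $\Modf{\sigma,r}$ is exactly its module category.
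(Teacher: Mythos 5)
Your overall strategy is the same as the paper's: transport the problem through \eqref{eq:1}, apply Lusztig-style equivariant localization to the whole graded endomorphism algebra, and then get the category equivalence formally (Theorem \ref{thm:2.5} is indeed assembled from Theorem \ref{thm:1.2} and Propositions \ref{prop:2.3}, \ref{prop:2.4}). But there are two genuine gaps. First, the theorem is asserted for \emph{disconnected} $G$ -- exactly the generality in which the twisted algebras with nontrivial $\natural_{q\cE}$ occur -- while for connected $G$ and $r \neq 0$ the statement is essentially Lusztig's. Your argument never leaves the connected world: the localization mechanism you invoke from \cite{Lus-Cusp2} only compares $G^\circ \times \C^\times$- with $Z_{G^\circ}(\sigma) \times \C^\times$-equivariant algebras, and the descent to $G$ is not automatic. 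In the paper this descent is most of the work in Propositions \ref{prop:2.3} and \ref{prop:2.4}: one takes invariants under the relevant component groups in both tensor factors, and there is a non-formal step \eqref{eq:2.5}--\eqref{eq:2.7} where invariance under $Z_G(\sigma) G^\circ \times \C^\times$ is promoted to $G$-invariance, using that distinct cosets in $G / Z_G(\sigma) G^\circ$ give distinct classes in $\mf g /\!/ Z_G(\sigma) G^\circ$, together with Lemma \ref{lem:2.2} on the injectivity of $\mf t / W_{q\cE} \to \mf g /\!/ G$. Without an argument of this kind the completed $G \times \C^\times$-equivariant algebra cannot be matched with the $Z_G(\sigma) \times \C^\times$-equivariant one.

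Second, the place where you propose to justify exactness of completion -- parity/purity vanishing and freeness coming from semisimplicity of $K_N$ -- is precisely where the known technical problem sits: Lusztig's localization theorem \cite[Proposition 4.4]{Lus-Cusp2} is proved under the hypothesis $H_c^{\mathrm{odd}}(X, \mc L^\vee) = 0$, and this hypothesis fails for the relevant varieties (this is the issue treated in Appendix \ref{sec:B}), so one cannot simply cite that mechanism. The paper replaces it by Propositions \ref{prop:B.1} and \ref{prop:B.2}, a K\"unneth/freeness argument translated from the equivariant K-homology of \cite{KaLu}, which reduces to a maximal torus; the freeness used there is that of $H^*_T(\pt)$ over $H^*_G(\pt)$, not a property of $K_N$. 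Relatedly, for $r = 0$ the passage from $(\mf g^{\sigma,0}, K_{\sigma,0})$ to $(\mf g_N^{\sigma,0}, K_{N,\sigma,0})$ is not a tautology (the varieties differ) and needs the extra input of \cite[Proposition 2.10]{SolSGHA}, as in Proposition \ref{prop:2.4}.b. Finally, a small correction to your last step: $\hat Z (\mh H)_{\sigma,r} \otimes_{Z(\mh H)} \mh H (G,M,q\cE)$ is not finite dimensional, only finite over the complete local ring $\hat Z (\mh H)_{\sigma,r}$; the identification of $\Modf{\sigma,r}$ is with finite length modules of the completed algebra that are continuous for the adic topology (as stated before Theorem \ref{thm:2.5}), and with that formulation the categorical statement does follow formally from the algebra isomorphism.
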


When $G$ is connected and $r \neq 0$, Theorem \ref{thm:A} is due to Lusztig \cite{Lus-Cusp2}.
Our investigations revealed a technical problem in the relevant part of \cite{Lus-Cusp2}, which was 
resolved in collaboration with Lusztig, see \cite[\# 121]{Lus-Corr} and Appendix \ref{sec:B}. 

Graded Hecke algebras coming from reductive $p$-adic groups have the variable $\mb r \in \mh H (G,M,q\cE)$ 
specialized to a positive real number. In that respect a version of Theorem \ref{thm:A} for 
$\mh H (G,M,q\cE) / (\mb r - r)$ is fitting. As $\C [\mb r] = \mc O (\mr{Lie}(\C^\times))$
comes from the $\C^\times$-actions, a natural attempt is to replace $G \times \C^\times$-equivariance by 
$G$-equivariance. That does not work well directly in \eqref{eq:1}, only after localization. 

\begin{thmintro}\label{thm:B}
(see Paragraph \ref{par:fixed}) \\
Fix $r \in \C$. Theorem \ref{thm:A} becomes valid with $\mh H (G,M,q\cE) / (\mb r - r)$ instead
of $\mh H (G,M,q\cE)$ once we forget the $\C^\times$-equivariant structure everywhere.
\end{thmintro}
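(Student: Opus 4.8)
The plan is to revisit the proof of Theorem~\ref{thm:2.5} and to apply the forgetful functor $\mr{For}\colon\mc D^b_{H\times\C^\times}(X)\to\mc D^b_H(X)$ (with $H=G$ and with $H=Z_G(\sigma)$) at every occurrence of a $\C^\times$-equivariant object. Two ingredients of that proof have to be re-examined: the localization step, which contracts along the $\C^\times$-action so as to concentrate all relevant cohomology onto the eigenvariety $\mf g_N^{\sigma,r}$, and the identification of a completed endomorphism algebra with a completion of $\mh H(G,M,q\cE)$. The localization step uses only the $\C^\times$-\emph{action} on $\mf g_N$ and on $\mf g_N^{\sigma,r}$, together with purity (equivariant formality) of the semisimple complexes involved, so it is unaffected by applying $\mr{For}$; observe in particular that $\mf g_N^{\sigma,r}$ and $\mr{For}\,K_{N,\sigma,r}$ still depend on $r$, so this complex retains the information distinguishing it from its analogues at other values of $r$ even after the $\C^\times$-equivariant structure has been forgotten.

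The crux is the interaction of $\mr{For}$ with the variable $\mb r$. Under \eqref{eq:1}, $\mb r$ is the image of the generator of $H^*_{\C^\times}(\pt)$, so forgetting $\C^\times$-equivariance corresponds, on the algebra side, to a base change along $\C[\mb r]\to\C[\mb r]/(\mb r-c)$, where $c$ is the value at which the $\C^\times$-direction is ``centred''. Globally that centre is the identity $1\in\C^\times$, i.e.\ $c=0$, and this is exactly why $\mr{For}$ applied directly to \eqref{eq:1} recovers only $\mh H(G,M,q\cE)/(\mb r)$ and ``does not work well''. After formal completion at $(\sigma,r)$, however, one has $\hat{H}^*_{H\times\C^\times}(\pt)_{\sigma,r}\cong\hat{H}^*_H(\pt)_\sigma\,\widehat{\otimes}_\C\,\C[[\mb r-r]]$, so the $\C^\times$-direction has become the formal disc centred at $r$, and the relevant base change is now $\C[[\mb r-r]]\to\C[[\mb r-r]]/(\mb r-r)=\C$, i.e.\ $c=r$. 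Concretely, the key step is to show that $\mr{For}$ induces, after completion at $(\sigma,r)$, an isomorphism
\[
\hat{H}^*_{Z_G(\sigma)}(\pt)_\sigma\otimes_{H^*_{Z_G(\sigma)}(\pt)}\End^*_{\mc D^b_{Z_G(\sigma)}(\mf g_N^{\sigma,r})}(\mr{For}\,K_{N,\sigma,r})\;\isom\;\Big(\hat{H}^*_{Z_G(\sigma)\times\C^\times}(\pt)_{\sigma,r}\otimes_{H^*_{Z_G(\sigma)\times\C^\times}(\pt)}\End^*_{\mc D^b_{Z_G(\sigma)\times\C^\times}(\mf g_N^{\sigma,r})}(K_{N,\sigma,r})\Big)\big/(\mb r-r).
\]
Here one uses that, the complexes being semisimple and hence pure, the endomorphism algebras in question are free over $\C[\mb r]=H^*_{\C^\times}(\pt)$ — as is visible in the global case from $\End^*_{\mc D^b_{G\times\C^\times}(\mf g_N)}(K_N)=\mh H(G,M,q\cE)$, which is $\C[\mb r]$-free — so the base change by $(\mb r-r)$ is non-derived and no $\mr{Tor}$-terms intervene.

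Granting the displayed isomorphism, Theorem~\ref{thm:2.5} supplies the rest. Since $Z(\mh H(G,M,q\cE))/(\mb r-r)\cong\mc O(\mf t/W_{q\cE})$, reducing the algebra isomorphism of Theorem~\ref{thm:2.5} modulo the central element $\mb r-r$ yields exactly the algebra isomorphism claimed for $\mh H(G,M,q\cE)/(\mb r-r)$ with the $\C^\times$-equivariant structure forgotten. The equivalence of $\Modf{\sigma,r}$-categories then follows formally: an isomorphism of the relevant completed algebras respecting central characters transports the finite-length modules admitting the central character $(\sigma,r)$ from one side to the other, and $\Modf{\sigma,r}$ of an algebra coincides with that of its completion at $(\sigma,r)$. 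For disconnected $G$ one checks in addition that $\mr{For}$ respects the cocycle twist defining the \emph{twisted} graded Hecke algebra; this is immediate, as that twist lives on the equivariant category of the disconnected group and is not touched by forgetting $\C^\times$.

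The step I expect to be the main obstacle is the displayed isomorphism: establishing that, after localization, ``forgetting the $\C^\times$-equivariant structure'' is the base change at $\mb r=r$ and not at $\mb r=0$. This amounts to tracking, through the localization of Theorem~\ref{thm:2.5}, how the scaling $\C^\times$-parameter on $\mf g_N$ is shifted by $r$ upon restriction to the eigenvariety $\mf g_N^{\sigma,r}$ — precisely the sort of bookkeeping that is sensitive to the point in \cite{Lus-Cusp2} corrected in Appendix~\ref{sec:B}. Once that shift and the $\C[\mb r]$-freeness are in hand, the remaining verifications are routine.
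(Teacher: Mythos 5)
Your global strategy (first complete at $(\sigma,r)$, only then discard the $\C^\times$-equivariance, and finally reduce modulo $\mb r - r$) is the same as the paper's, and you correctly isolate the displayed isomorphism as the statement that carries all the content. But that statement is exactly where your argument stops: you assert it and yourself call it ``the main obstacle'', and the justification you offer -- semisimplicity/purity, hence freeness of the endomorphism algebra over $\C[\mb r]$, hence an underived base change -- does not prove it. Freeness over $H^*_{\C^\times}(\pt)=\C[\mb r]$ would at best identify the non-$\C^\times$-equivariant Ext-algebra with the fibre of the equivariant one at $\mb r=0$ (the augmentation), and completing the base at $(\sigma,r)$ does not by itself move that evaluation point to $\mb r=r$; saying that ``the $\C^\times$-direction has become the formal disc centred at $r$'' is a restatement of what has to be shown, not an argument. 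Moreover, the freeness you invoke is inferred from the \emph{global} identity $\End^*_{\mc D^b_{G\times\C^\times}(\mf g_N)}(K_N)\cong \mh H (G,M,q\cE)$, which says nothing a priori about the localized algebra $\End^*_{\mc D^b_{Z_G(\sigma)\times\C^\times}(\mf g_N^{\sigma,r})}(K_{N,\sigma,r})$.

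What actually produces the shift in the paper (Lemmas \ref{lem:8.1} and \ref{lem:8.2}, feeding into Theorem \ref{thm:8.3}) is a geometric splitting, not a formal base-change argument. One first reduces, via \cite[\S 4.11]{Lus-Cusp2} and Proposition \ref{prop:B.1}, to a maximal torus $T'$ of $Z_G^\circ(\sigma)$; then one observes that the pointwise stabilizer $Z'=Z_{T'\times\C^\times}(\mf g^{\sigma,r})$ contains $\exp(\C(\sigma,r))$, which for $r\neq 0$ projects \emph{onto} $\C^\times$, so that $T'\times\C^\times = Z'\times Z^\perp$ with $Z^\perp\subset T'$ as in \eqref{eq:8.7}--\eqref{eq:8.8}. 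Since $Z'$ acts trivially on $\mf g^{\sigma,r}$ and on $K_{\sigma,r}$, this gives the canonical factorization \eqref{eq:8.10}, $\End^*_{\mc D^b_{T'\times\C^\times}(\mf g^{\sigma,r})}(K_{\sigma,r})\cong \C[\mb r]\otimes \End^*_{\mc D^b_{T'}(\mf g^{\sigma,r})}(K_{\sigma,r})$, and completing the first factor at the point $(\sigma,r)$ is what turns it into $\C[[\mb r-r]]$; dividing by $(\mb r-r)$ then removes it. This mechanism genuinely requires $r\neq 0$ (otherwise $Z'$ need not surject onto $\C^\times$), so the case $r=0$ must be, and is, handled separately (Lemma \ref{lem:8.1}, again via \cite[\S 4.11]{Lus-Cusp2}), and disconnectedness is dealt with by taking $\pi_0(Z_G(\sigma))$-invariants as in Proposition \ref{prop:2.3}. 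None of these ingredients -- the torus reduction, the trivially acting diagonal subgroup through $(\sigma,r)$, the dichotomy $r=0$ versus $r\neq 0$, the invariants step -- appears in your proposal, so as it stands the proof has a genuine gap at its central step, even though the statement you reduced everything to is the right one.
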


Theorems \ref{thm:A} and \ref{thm:B} could be helpful for a geometric construction of
Ext-groups between objects of $\Modf{\sigma ,r} (\mh H (G,M,q\cE))$ or
$\Modf{\sigma} (\mh H (G,M,q\cE) / (\mb r - r))$.

For more concrete results, it is necessary to understand standard and irreducible 
$\mh H (G,M,q\cE)$-modules better. They were obtained in \cite{Lus-Cusp1,AMS2} via the equivariant 
cohomology of certain flag varieties with local systems. They can be parametrized by the following 
data (considered up to $G$-conjugation): 
\[
\text{semisimple } \sigma \in \mf g ,\: r \in \C ,\: y \in \mf g_N^{\sigma,r} \text{ and certain }
\rho \in \Irr \big( \pi_0 (Z_G (\sigma,y)) \big).
\] 
The standard $\mh H (G,M,q\cE)$-module $E_{y,\sigma,r,\rho}$ has a distinguished (unique if $r \neq 0$)
irreducible quotient $M_{y,\sigma,r,\rho}$. In Paragraph \ref{par:involutions} we observe that the
parametrization from \cite{AMS2} is more complicated than necessary, and we make it more natural.

Next we provide several useful alternative constructions of $E_{y,\sigma,r,\rho}$:

\begin{thmintro}\label{thm:C}
(see Proposition \ref{prop:6.2} and Lemma \ref{lem:6.3}) \\
Suppose that $\rho \in \Irr \big( \pi_0 (Z_G (\sigma,y)) \big)$ fulfills the condition to parametrize
an \\ $\mh H (G,M,q\cE)$-module. Let $i_y : \{y\} \to \mf g_N^{\sigma,r}$ be the inclusion.
\enuma{
\item There is an isomorphism of $\mh H (G,M,q\cE)$-modules
\[
\Hom_{\pi_0 (Z_G (\sigma,y))} \big( \rho, H^* (i_y^! K_{N,\sigma,r}) \big) \cong E_{y,\sigma,r,\rho}.
\]
\item Denote the dual of a local system or representation by a $\vee$. There is an isomorphism of 
$\mh H (G,M,q\cE^\vee)$-modules
\[
\Hom_{\pi_0 (Z_G (\sigma,y))} \big( \rho^\vee, H^* (i_y^* K_{N,\sigma,r})^\vee \big) \cong
E_{y,\sigma,r,\rho^\vee} .
\]
\item Let $\mr{ind}_{Z_G (\sigma,y)}^{Z_G (\sigma)} \rho$ be the $Z_G (\sigma) \times \C^\times
$-equivariant local system on $\mr{Ad}(Z_G (\sigma)) y$ determined by $\rho$, and let
$j_N : \mr{Ad}(Z_G (\sigma)) y \to \mf g_N^{\sigma,r}$ be the inclusion. Then
\begin{equation}\label{eq:2}
\Hom^*_{\mc D^b_{Z_G (\sigma) \times \C^\times}(\mf g_N^{\sigma,r})} \big( K_{N,\sigma,r},
j_N^* \mr{ind}_{Z_G (\sigma,y)}^{Z_G (\sigma)} \rho \big)
\end{equation}
is a graded $\mh H (G,M,q\cE^\vee)$-module, 
\begin{equation}\label{eq:3}
\Hom^*_{\mc D^b_{Z_G (\sigma)}(\mf g_N^{\sigma,r})} \big( K_{N,\sigma,r},
j_N^* \mr{ind}_{Z_G (\sigma,y)}^{Z_G (\sigma)} \rho \big)
\end{equation}
is a graded right $\End^*_{\mc D^b_{Z_G (\sigma)}(\mf g_N^{\sigma,r})}(K_{N,\sigma,r})$-module and
there are canonical surjections of $\mh H (G,M,q\cE^\vee)$-modules
\[
\eqref{eq:2} \to \eqref{eq:3} \to E_{y,\sigma,r,\rho^\vee} .
\]
}
\end{thmintro}

We note that $E_{y,\sigma,r,\rho^\vee}$ is not a graded $\mh H (G,M,q\cE^\vee)$-module, because the 
surjection in Theorem \ref{thm:C}.c consists of dividing out the submodule generated by the 
inhomogeneous ideal 
\[
\ker (\mr{ev}_{\sigma,r}) \subset H^*_{Z_G (\sigma) \times \C^\times}(\pt) \cong
\mc O (Z_{\mf g}(\sigma))^{Z_G (\sigma)} \otimes_\C \C [\mb r] .
\]
In Appendix \ref{sec:parabolic} we prove that, under a mild condition, standard modules of twisted
graded Hecke algebras are compatible with parabolic induction. That is relevant for parts of 
\cite{AMS2,AMS3} which are related to Section \ref{sec:KL}, and it shows that our standard modules
are really analogous to the standard representations for reductive groups studied by Langlands
and others.

The original Kazhdan--Lusztig conjecture \cite{KaLu1} concerned the multiplicities of irreducible
modules in standard modules for semisimple complex Lie algebras. One can ask for the analogous 
multiplicities for any group or algebra with a good notion of standard modules, in particular for
a (twisted) graded Hecke algebra. The next result relies on some properties of $(\mf g_N^{\sigma,r},
K_{N,\sigma,r})$ from Section \ref{sec:structure}.

\begin{thmintro}\label{thm:D} 
(see Proposition \ref{prop:2.9}) \\
Suppose that $(y,\sigma,r,\rho)$ and $(y',\sigma,r,\rho')$ parametrize $\mh H (G,M,q\cE)$-modules.
The multiplicity of the irreducible module $M_{y',\sigma,r,\rho'}$ in the standard module
$E_{y,\sigma,r,\rho}$ equals the multiplicity of the local system 
$\mr{ind}_{Z_G (\sigma,y)}^{Z_G (\sigma)} \rho$ in the pullback to $\mr{Ad}(Z_G (\sigma))y$ of
the cohomology sheaf $\mc H^* \big( \IC_{Z_G (\sigma)} (\mf g_N^{\sigma,r}, 
\mr{ind}_{Z_G (\sigma,y')}^{Z_G (\sigma)} \rho') \big)$.
\end{thmintro}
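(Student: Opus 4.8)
The plan is to transport both sides to the geometry underlying the isomorphism \eqref{eq:1} and then apply the standard computation of composition multiplicities through the endomorphism algebra of a semisimple complex. First I would use Theorem~\ref{thm:A}: the induced equivalence $\Modf{\sigma,r}(\mh H(G,M,q\cE)) \cong \Modf{\sigma,r}(\End^*_{\mc D^b_{Z_G(\sigma)\times\C^\times}(\mf g_N^{\sigma,r})}(K_{N,\sigma,r}))$ is exact, hence preserves composition series, so it is enough to compute $[E_{y,\sigma,r,\rho} : M_{y',\sigma,r,\rho'}]$ over $A := \End^*_{\mc D^b_{Z_G(\sigma)\times\C^\times}(\mf g_N^{\sigma,r})}(K_{N,\sigma,r})$. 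Write $K := K_{N,\sigma,r}$ and $G' := Z_G(\sigma)$. By Section~\ref{sec:structure}, $K$ is a semisimple complex on $\mf g_N^{\sigma,r}$, constructible for the stratification into $G'$-orbits, so the decomposition theorem gives
\[
K \;\cong\; \bigoplus_{(\mc O,\mc L)} \IC_{G'}(\overline{\mc O},\mc L) \otimes_\C W_{\mc O,\mc L},
\]
the sum over pairs of a $G'$-orbit $\mc O \subseteq \mf g_N^{\sigma,r}$ and an irreducible $G'$-equivariant local system $\mc L$ on it, with finite-dimensional multiplicity spaces $W_{\mc O,\mc L}$; as in \cite{AMS2} and Section~\ref{sec:structure}, the nonzero $W_{\mc O,\mc L}$ are precisely the simple $A$-modules, and $W_{\mr{Ad}(G')y',\, \mr{ind}_{Z_G(\sigma,y')}^{G'}\rho'} \cong M_{y',\sigma,r,\rho'}$.

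Next I would realize the standard module geometrically: by Theorem~\ref{thm:C}.a one has $E_{y,\sigma,r,\rho} \cong \Hom_{\pi_0(Z_G(\sigma,y))}(\rho, H^*(i_y^! K))$, where $H^*(i_y^!(-)) = \Hom^*_{\mc D^b_{G'}(\mf g_N^{\sigma,r})}(i_{y*}\underline\C,-)$ carries the left $A$-action by postcomposition with $\End^*(K)$, and this commutes with the $\pi_0(Z_G(\sigma,y))$-action on $i_y^! K$. Substituting the decomposition of $K$ yields, as graded $\pi_0(Z_G(\sigma,y))$-modules,
\[
H^*(i_y^! K) \;=\; \bigoplus_{(\mc O,\mc L)} H^*\!\big(i_y^! \IC_{G'}(\overline{\mc O},\mc L)\big) \otimes W_{\mc O,\mc L},
\]
but --- and this is the crux --- \emph{not} as $A$-modules, since $A \cong \bigoplus_{(\mc O,\mc L),(\mc O',\mc L')} \Hom^*\!\big(\IC_{G'}(\overline{\mc O},\mc L), \IC_{G'}(\overline{\mc O'},\mc L')\big) \otimes \Hom_\C(W_{\mc O,\mc L}, W_{\mc O',\mc L'})$ has off-diagonal parts moving one summand into another.

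The computation now rests on the standard structural facts: $\Hom^{<0}$ between simple perverse sheaves vanishes, $\Hom^0$ between them is $\C\cdot\mathrm{id}$ on a sheaf and $0$ between non-isomorphic ones, so $A$ is nonnegatively graded with semisimple degree-zero part $A_0 \cong \bigoplus_{(\mc O,\mc L)}\End_\C(W_{\mc O,\mc L})$ and its simple modules are therefore exactly the $W_{\mc O,\mc L}$; moreover $\End^*(\IC_{G'}(\overline{\mc O},\mc L))$ has one-dimensional degree-zero part, so its positive part acts nilpotently on the finite-dimensional space $H^*(i_y^!\IC_{G'}(\overline{\mc O},\mc L))$. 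Feeding these facts into the usual analysis of modules over the endomorphism algebra of a semisimple complex --- reading off $W_{\mc O,\mc L}$-multiplicities from the $A_{>0}$-adic (equivalently, closure-order) filtration of $H^*(i_y^! K)$, whose layers are built from the pieces $H^*(i_y^!\IC_{G'}(\overline{\mc O},\mc L))\otimes W_{\mc O,\mc L}$ --- and applying the exact functor $\Hom_{\pi_0(Z_G(\sigma,y))}(\rho,-)$, I would obtain
\[
[E_{y,\sigma,r,\rho} : M_{y',\sigma,r,\rho'}] \;=\; \dim_\C \Hom_{\pi_0(Z_G(\sigma,y))}\!\big(\rho,\, H^*(i_y^! \IC_{G'}(\overline{\mr{Ad}(G')y'},\, \mr{ind}_{Z_G(\sigma,y')}^{G'}\rho'))\big).
\]

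Finally I would match this costalk with the stalk in the statement. Under the identification of $G'$-equivariant local systems on $\mr{Ad}(G')y$ with representations of $\pi_0(Z_G(\sigma,y))$ (with $\mr{ind}_{Z_G(\sigma,y)}^{G'}\rho$ corresponding to $\rho$), the right-hand side of the statement equals $\dim_\C\Hom_{\pi_0(Z_G(\sigma,y))}(\rho, H^*(i_y^*\IC_{G'}(\overline{\mr{Ad}(G')y'},\, \mr{ind}_{Z_G(\sigma,y')}^{G'}\rho')))$, so it remains to pass from $i_y^!$ to $i_y^*$ for these $\IC$-sheaves. I would deduce this from the self-duality of $K_{N,\sigma,r}$ supplied by Section~\ref{sec:structure} together with the cleanness of cuspidal local systems; alternatively, from Verdier duality combined with the interchange $q\cE\leftrightarrow q\cE^\vee$, which gives a contravariant equivalence $\Modf{\sigma,r}(\mh H(G,M,q\cE)) \to \Modf{\sigma,r}(\mh H(G,M,q\cE^\vee))$ carrying $E_{y,\sigma,r,\rho}$ to $E_{y,\sigma,r,\rho^\vee}$ and $M_{y',\sigma,r,\rho'}$ to $M_{y',\sigma,r,{\rho'}^\vee}$ --- hence preserving the multiplicities --- while $\mathbb D$ interchanges $i_y^!$ with $i_y^*$ and $\IC_{G'}(\overline{\mc O},\mc L)$ with $\IC_{G'}(\overline{\mc O},\mc L^\vee)$. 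The hard part will be precisely this concluding bookkeeping: checking that the nonnegatively-graded structure of the endomorphism algebra of a semisimple complex --- and hence the multiplicity formula --- survives the disconnectedness of $G$ and $M$, the twist, and the formal completion of Theorem~\ref{thm:A}, and tracking the dualities carefully enough that the costalk computation lands exactly on the stalk quantity of the statement without an extra dualization; the filtration and multiplicity counting themselves are routine.
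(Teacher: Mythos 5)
Your overall strategy -- pass to $A:=\End^*_{\mc D^b_{Z_G(\sigma)\times\C^\times}(\mf g_N^{\sigma,r})}(K_{N,\sigma,r})$ via Theorem \ref{thm:2.5}, decompose $K_{N,\sigma,r}$ into equivariant $\IC$-sheaves as in Theorem \ref{thm:2.7}, match the multiplicity spaces with the simple modules, realize $E_{y,\sigma,r,\rho}$ as the $\rho$-isotypic part of the costalk via Proposition \ref{prop:6.2}.a, and count composition factors through a radical/support filtration -- is precisely the skeleton of Lusztig's argument in \cite[\S 10.4--10.8]{Lus-Cusp2}, which is all the paper's own proof of Proposition \ref{prop:2.9} invokes (supplemented by Proposition \ref{prop:2.6} and Theorem \ref{thm:2.7} to handle disconnected $G$, quasi-supports and $r=0$). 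But there is a genuine gap at the decisive step. What your filtration argument yields for the standard module, which is built from $i_y^!K_{N,\sigma,r}$, is the multiplicity of $\mr{ind}_{Z_G(\sigma,y)}^{Z_G(\sigma)}\rho$ in the \emph{costalk} $H^*\big(i_y^!\,\IC_{Z_G(\sigma)}(\mf g_N^{\sigma,r},\mr{ind}_{Z_G(\sigma,y')}^{Z_G(\sigma)}\rho')\big)$; on the nose this is part (b) of Proposition \ref{prop:2.9} (the costandard module), whereas Theorem \ref{thm:D} asserts the multiplicity in the \emph{stalk}, i.e.\ in the pullback of the cohomology sheaves. The passage from $i_y^!$ to $i_y^*$ is the actual Kazhdan--Lusztig content here, and the devices you propose cannot supply it: $K_{N,\sigma,r}$ is not self-dual (by \eqref{eq:2.13} its Verdier dual is the $q\cE^\vee$-version, and cuspidal local systems need not be self-dual), and Verdier duality combined with the interchange $q\cE\leftrightarrow q\cE^\vee$ only shows that the $\rho$-multiplicity in the costalk of the $q\cE$-sheaf $\IC(\mr{ind}\,\rho')$ equals the $\rho^\vee$-multiplicity in the stalk of the $q\cE^\vee$-sheaf $\IC(\mr{ind}\,\rho'^\vee)$ -- it transfers the question to the dual datum rather than answering it. Relatedly, the contravariant equivalence you invoke carries standard modules to costandard modules of $\mh H(G,M,q\cE^\vee)$ (compare Proposition \ref{prop:6.2}.b), not $E_{y,\sigma,r,\rho}$ to $E_{y,\sigma,r,\rho^\vee}$. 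The required equality of isotypic stalk and costalk multiplicities is exactly what \cite[\S 10.4--10.8]{Lus-Cusp2} establishes, using finer properties of these complexes than formal duality.

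A secondary imprecision: in the equivariant setting the positive-degree part of $A$ contains $H^{>0}_{Z_G(\sigma)\times\C^\times}(\pt)\cdot\mathrm{id}$, which acts on $E_{y,\sigma,r,\rho}$ through evaluation at $(\sigma,r)$, hence by generally nonzero scalars; so ``$A_{>0}$ acts nilpotently'' and the $A_{>0}$-adic filtration are only meaningful after the completion at the central character provided by Theorem \ref{thm:2.5} (this is the role of the localization machinery of Appendix \ref{sec:B}), and the degree-zero part of $A$ is not simply $\bigoplus\End_\C(W_{\mc O,\mc L})$ once the shifts inside the multiplicity spaces are taken into account. Since you do set up the reduction inside $\Modf{\sigma,r}$, this part is repairable by running the radical (rather than grading) argument on the completed algebra; the unproven $i_y^!$ versus $i_y^*$ comparison above is the substantive gap.
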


A version of the Kazhdan--Lusztig conjecture for the $p$-adic group $GL_n (F)$ appeared in \cite{Zel},
and Vogan \cite{Vog} formulated it for all connected reductive groups over local fields. We point out
that \cite[Conjecture 8.11]{Vog} contains some signs which are useful for real reductive groups but
better omitted in the non-archimedean instances.

To transfer Theorem \ref{thm:D} to reductive $p$-adic groups and their Langlands parameters, we need
to make several assumptions about aspects of the local Langlands correspondence. We refer to Section
\ref{sec:KL} for an explanation of the setup and the conditions.

Let $q_F$ be the cardinality of the residue field of the non-archimedean local field $F$. 
For $r = \log (q_F)/ 2$ the variety
\begin{equation}\label{eq:4}
\mf g_N^{\sigma,-r} = \{ y \in \mf g_N : \Ad (\exp \sigma) y = q_F^{-1} y \} .
\end{equation}
can be identified with a set of unramified Langlands parameters $\phi : \mb W_F \rtimes \C \to G$, 
with $\exp (\sigma)$ the image of a Frobenius element of $\mb W_F$. This can be used to model
varieties of arbitrary Langlands parameters for $\mc G (F)$.

\begin{thmintro}\label{thm:E} 
(see Theorem \ref{thm:8.7}) \\
Consider a Bernstein block in the category of smooth complex representations of a connected reductive 
$p$-adic group $\mc G (F)$, coming from a supercuspidal representation $\omega$ of a Levi subgroup 
$\mc M (F)$. Suppose that the conditions from Section \ref{sec:KL} hold.
\enuma{
\item Let $\Rep_{\mr{fl}} (\mc G (F))^\omega$ be the category of finite length $\mc G (F)
$-representations all whose irreducible subquotients have cuspidal support $(\mc M (F),\omega)$. 

Then $\Rep_{\mr{fl}} (\mc G (F))^\omega$ is equivalent with a category of the form
\[
\Modf{\sigma} \big( \End^*_{\mc D^b_{Z_G (\sigma)}(\mf g_N^{\sigma,-r})}(K_{N,\sigma,-r}) \big)
\quad \text{where} \quad r = \log (q_F) /2 . \vspace{-1mm}
\]
\item The $p$-adic Kazhdan--Lusztig conjecture (as in \cite[Conjecture 8.11]{Vog} without the signs)
holds for irreducible and standard representations in $\Rep_{\mr{fl}} (\mc G (F))^\omega$. It takes the 
form of Theorem \ref{thm:D}, where all objects live on a variety of Langlands parameters \eqref{eq:4}.
\item Parts (a) and (b) hold unconditionally in the following cases:
\begin{itemize}
\item inner forms of general linear groups,
\item inner forms of special linear groups,
\item principal series representations of quasi-split groups,
\item unipotent representations (of arbitrary connected reductive groups over $F$),
\item classical $F$-groups (not necessarily quasi-split).
\end{itemize}
}
\end{thmintro}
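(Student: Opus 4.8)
The plan is to deduce Theorem~\ref{thm:E} from the structural results already assembled, so the real content is a careful bookkeeping of how three separate dictionaries fit together. First I would set up part~(a). By \cite{SolEnd}, the Bernstein block $\Rep_{\mr{fl}}(\mc G(F))^\omega$ attached to $(\mc M(F),\omega)$ is equivalent to the category of finite length modules over a twisted affine Hecke algebra, and on the relevant central character it is equivalent to finite length modules over a twisted graded Hecke algebra $\mh H(G,M,q\cE)/(\mb r - r)$ with $r = \log(q_F)/2$, where $(G,M,q\cE)$ is produced from the local Langlands correspondence for $\mc M(F)$ and its twisting data; the conditions of Section~\ref{sec:KL} are precisely what makes this identification exist and be compatible with cuspidal support. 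Then I apply Theorem~\ref{thm:B} (the $r$-specialized, $\C^\times$-forgotten form of Theorem~\ref{thm:A}) to rewrite $\Modf{\sigma}\big(\mh H(G,M,q\cE)/(\mb r - r)\big)$ as $\Modf{\sigma}\big(\End^*_{\mc D^b_{Z_G(\sigma)}(\mf g_N^{\sigma,-r})}(K_{N,\sigma,-r})\big)$, using the sign of $r$ dictated by \eqref{eq:4} so that $\mf g_N^{\sigma,-r}$ is the variety of unramified parameters with $\exp(\sigma)$ the Frobenius image. Composing the two equivalences gives~(a).

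For part~(b), the strategy is to transport Theorem~\ref{thm:D} across the same chain of equivalences and then identify the geometric objects on the two sides. Under \cite{SolEnd} and \cite{AMS3}, standard and irreducible representations in the block correspond to the standard and irreducible $\mh H(G,M,q\cE)/(\mb r - r)$-modules $E_{y,\sigma,r,\rho}$ and $M_{y,\sigma,r,\rho}$, with the parameter $(y,\sigma,r,\rho)$ matching an enhanced Langlands parameter $(\phi,\rho)$ via $\phi|_{\mb W_F}$ encoded by $(\sigma,-r)$ and the unipotent part $y$. The multiplicity $[E_{y,\sigma,r,\rho} : M_{y',\sigma,r,\rho'}]$ is an invariant of the module category, hence unchanged by the equivalences, so Theorem~\ref{thm:D} already expresses it as the multiplicity of the local system $\mr{ind}_{Z_G(\sigma,y)}^{Z_G(\sigma)}\rho$ in $j_N^*\mc H^*\big(\IC_{Z_G(\sigma)}(\mf g_N^{\sigma,-r}, \mr{ind}_{Z_G(\sigma,y')}^{Z_G(\sigma)}\rho')\big)$. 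It then remains to observe that, under the identification \eqref{eq:4}, these intersection cohomology complexes on the nilpotent variety are exactly the objects appearing in Vogan's microlocal formulation on the variety of Langlands parameters, and that the signs in \cite[Conjecture 8.11]{Vog} are the ones flagged in the introduction as superfluous in the $p$-adic case; dropping them makes the statements coincide. This gives~(b).

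For part~(c) I would go through the four listed families and cite, for each, that the conditions of Section~\ref{sec:KL} are known. For inner forms of $GL_n$ and $SL_n$ this rests on the explicit local Langlands correspondence and the known structure of Hecke algebras for these blocks (e.g.\ from the work on types for $GL_n$ and its inner forms, and the corresponding $SL_n$ results); for principal series of split groups it follows from the established description of the relevant Hecke algebras and Langlands parameters in the principal series; for unipotent representations it follows from the Kazhdan--Lusztig-type geometric realization already available in that setting. In each case one checks that the twisted graded Hecke algebra produced is genuinely of the geometric form $\mh H(G,M,q\cE)$ and that cuspidal support matches the variety \eqref{eq:4}, so (a) and (b) apply.

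The main obstacle will be part~(b): not the multiplicity comparison itself, which is formal once the categories are identified, but the bookkeeping needed to see that the geometric side of Theorem~\ref{thm:D}, phrased on $\mf g_N^{\sigma,-r} \subset \mf g_N$, literally matches Vogan's picture on the variety of Langlands parameters --- including the precise dictionary between $Z_G(\sigma)$-equivariant perverse sheaves there and the micro-packets of \cite{Vog}, and a careful check that the discrepancy with \cite[Conjecture 8.11]{Vog} is exactly the sign normalization mentioned in the introduction and nothing more. Getting the equivariant structures, the role of $\pi_0(Z_G(\sigma,y))$, and the $\C^\times$- versus non-$\C^\times$-equivariance to line up cleanly across \cite{SolEnd}, \cite{AMS3}, Theorem~\ref{thm:B} and Theorem~\ref{thm:D} is where the care is required.
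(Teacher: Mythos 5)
Your proposal is correct and follows essentially the same route as the paper: the equivalence \eqref{eq:8.15} from \cite{SolEnd} combined with the assumed isomorphism \eqref{eq:8.13} and compatibility \eqref{eq:8.14} from Section \ref{sec:KL}, then Theorem \ref{thm:8.3} (Theorem \ref{thm:B}) and Proposition \ref{prop:2.9} (Theorem \ref{thm:D}) assembled through Theorem \ref{thm:8.5}, with part (c) disposed of by the same case-by-case citations (the paper additionally notes how to remove Roche's residual-characteristic restrictions for principal series via \cite{SolEnd} and \cite{SolParam}). The only step you gloss is that matching analytic standard modules to the geometric ones uses the sign involution $\sgn^*$ (Proposition \ref{prop:5.1} and \eqref{eq:2.20}), which is precisely how the paper arranges that the variety $\mf g_N^{\sigma,-r}$ of \eqref{eq:4}, rather than $\mf g_N^{\sigma,r}$, appears.
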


\textbf{Acknowledgements}\\
We thank George Lusztig, Eugen Hellmann and David Ben-Zvi for some helpful conversations.
We are very grateful to the referee for his of her detailed reports, which lead to many
improvements in the paper.

\renewcommand{\theequation}{\arabic{section}.\arabic{equation}}
\counterwithin*{equation}{section}

\section{The setup from \cite{SolSGHA}}
\label{sec:setup}

All our groups will be complex linear algebraic groups. We mainly work in the equivariant bounded
derived categories of constructible sheaves from \cite{BeLu}. For a group $H$ acting on a space $X$,
this category will be denoted $\mc D^b_H (X)$.

Let $G$ be a complex reductive group, possibly disconnected. To construct a graded Hecke algebra
geometrically, we need a cuspidal quasi-support $(M,\cC_v^M,q\cE)$ for $G$ \cite{AMS1}. This consists of:
\begin{itemize}
\item a quasi-Levi subgroup $M$ of $G$, which means that $M^\circ$ is a Levi subgroup of $G^\circ$ and
$M = Z_G (Z (M^\circ)^\circ)$,
\item $\cC_v^M$ is a Ad$(M)$-orbit in the nilpotent variety $\mf m_N$ in the Lie algebra $\mf m$ of $M$,
\item $q\cE$ is a $M$-equivariant cuspidal local system on $\cC_v^M$.
\end{itemize}
We write $T = Z(M)^\circ$, $\mf t = \mr{Lie}(T)$ and 
\[
W_{q\cE} = \mr{Stab}_{N_G (M)} (q\cE) / M = N_G (M,q\cE) / M
\]
To these data one associates a twisted graded Hecke algebra
\begin{equation}\label{eq:1.3}
\mh H (G,M,q\cE) = \mh H (\mf t,W_{q\cE},k,\mb r,\natural_{q\cE}] ,
\end{equation}
see \cite[\S 2.1]{SolSGHA}. As vector space it is the tensor product of 
\begin{itemize}
\item a polynomial algebra $\mc O (\mf t \oplus \C) = \mc O (\mf t) \otimes \C [\mb r]$,
\item a twisted group algebra $\C [W_{q\cE},\natural_{q\cE}]$,
\end{itemize}
and there are nontrivial cross relations between these two subalgebras.

Let $\mf g_N$ be the nilpotent variety in the Lie algebra $\mf g$ of $G$. The algebra \eqref{eq:1.3} can
be realized in terms of suitable equivariant sheaves on $\mf g$ or $\mf g_N$. We let $\C^\times$ act
on $\mf g$ and $\mf g_N$ by $\lambda \cdot X = \lambda^{-2} X$. Then every $M$-equivariant local system 
on $\cC_v^M$, and in particular $q\cE$, is automatically $M \times \C^\times$-equivariant.

Let $P^\circ = M^\circ U$ be a parabolic subgroup of $G^\circ$ with Levi factor $M^\circ$ and unipotent
radical $U$. Then $P = M U$ is a ``quasi-parabolic" subgroup of $G$. Consider the varieties
\begin{align*}
& \dot{\mf g} = \{ (X,gP) \in \mf g \times G/P : 
\Ad (g^{-1}) X \in \cC_v^M \oplus \mf t \oplus \mf u \} , \\
& \dot{\mf g}_N = \dot{\mf g} \cap (\mf g_N \times G/P) .
\end{align*}
We let $G \times \C^\times$ act on these varieties by
\[
(g_1,\lambda) \cdot (X,gP) = (\lambda^{-2} \Ad (g_1) X, g_1 g P) .
\] 
By \cite[Proposition 4.2]{Lus-Cusp1} there are natural isomorphisms of graded algebras
\begin{equation}\label{eq:1.1}
H^*_{G \times \C^\times} (\dot{\mf g}) \cong H^*_{G \times \C^\times} (\dot{\mf g}_N) 
\cong \mc O (\mf t) \otimes_\C \C [\mb r] .
\end{equation}
Consider the maps
\begin{equation}\label{eq:1.2}
\begin{aligned}
& \cC_v^M \xleftarrow{f_1} \{ (X,g) \in \mf g \times G : \Ad (g^{-1}) X \in 
\cC_v^M \oplus \mf t \oplus \mf u\} \xrightarrow{f_2} \dot{\mf g} , \\
& f_1 (X,g) = \mathrm{pr}_{\cC_v^M}(\Ad (g^{-1}) X) , \hspace{2cm} f_2 (X,g) = (X,gP) .
\end{aligned}
\end{equation}
Let $\dot{q\cE}$ be the unique $G \times \C^\times$-equivariant local system on $\dot{\mf g}$ 
such that $f_2^* \dot{q\cE} = f_1^* q\cE$. Let $\mr{pr}_1 : \dot{\mf g} \to \mf g$ be the projection 
on the first coordinate and define
\[
K := \pr_{1,!} \dot{q\cE} \qquad \in \mc D^b_{G \times \C^\times} (\mf g ) .
\]
Let $\dot{q\cE}_N$ be the pullback of $\dot{q\cE}$ to $\dot{\mf g}_N$ and put
\[
K_N := \pr_{1,N,!} \dot{q\cE}_N \qquad \in \mc D^b_{G \times \C^\times} (\mf g_N ) ,
\]
a semisimple complex isomorphic to the pullback of $K$ to $\mf g_N$ \cite[\S 2.2]{SolSGHA}.
From \cite[Theorem 2.2]{SolSGHA}, based on \cite{Lus-Cusp1,Lus-Cusp2,AMS2}, we recall:

\begin{thm}\label{thm:1.2}
There exist natural isomorphisms of graded algebras
\[
\mh H (G,M,q\cE) \longrightarrow \End^*_{\mc D^b_{G \times \C^\times}(\mf g)}(K) \longrightarrow
\End^*_{\mc D^b_{G \times \C^\times}(\mf g_N)}(K_N) .
\]
\end{thm}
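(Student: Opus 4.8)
The two isomorphisms are of a rather different nature, and I would establish them separately and then check compatibility; the whole argument is essentially that of \cite{Lus-Cusp1,Lus-Cusp2}, extended to disconnected $G$ as in \cite{AMS2}.

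For the first arrow $\mh H(G,M,q\cE)\to\End^*_{\mc D^b_{G\times\C^\times}(\mf g)}(K)$ I would first give a geometric model of the target. Writing $K=\pr_{1,!}\dot{q\cE}$ and using the maps \eqref{eq:1.2}, adjunction together with (co)base change identifies $\End^*(K)=\Hom^*_{\mc D^b_{G\times\C^\times}(\mf g)}(K,K)$ with the $G\times\C^\times$-equivariant Borel--Moore homology of the fibre product $\dot{\mf g}\times_{\mf g}\dot{\mf g}$, with coefficients in the external product of $\dot{q\cE}$ and its Verdier dual, the multiplication being convolution of homology classes. Next I would stratify $\dot{\mf g}\times_{\mf g}\dot{\mf g}$ by the relative position of the two quasi-parabolics: the strata on which the coefficient sheaf has nonzero stalks are indexed by the elements of $W_{q\cE}$, and each such stratum is an affine bundle over $G/P$ whose equivariant homology is, up to a shift, $\mc O(\mf t)\otimes_\C\C[\mb r]$ by \eqref{eq:1.1}. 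Assembling these pieces produces a vector-space isomorphism $\End^*(K)\cong\mc O(\mf t\oplus\C)\otimes_\C\C[W_{q\cE},\natural_{q\cE}]$; the $2$-cocycle $\natural_{q\cE}$ is forced upon us because the rank-one local system $\dot{q\cE}$ need not be trivialisable on the $W_{q\cE}$-strata in a way compatible with the convolution. Finally I would compute the ring structure: that $\mc O(\mf t\oplus\C)$ is a subalgebra follows from \eqref{eq:1.1} applied to the diagonal, while the cross relations --- which are exactly the defining relations of the twisted graded Hecke algebra \eqref{eq:1.3}, with the parameter function $k$ read off from how $W_{q\cE}$ moves the cuspidal support --- are obtained by the usual reduction to rank-one sub-data, where the computation lives on a $\mathbb P^1$-bundle over a partial flag variety.

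For the second arrow $\End^*_{\mc D^b_{G\times\C^\times}(\mf g)}(K)\to\End^*_{\mc D^b_{G\times\C^\times}(\mf g_N)}(K_N)$ I would use that $K_N$ is the pullback of $K$ along the closed inclusion $\mf g_N\hookrightarrow\mf g$, so restriction of morphisms gives a natural graded algebra homomorphism. To see it is bijective, observe that the ``$\mf t$-component'' of $\Ad(g^{-1})X$ is a well-defined map $\dot{\mf g}\to\mf t$ which is $G$-invariant and homogeneous for the $\C^\times$-action, that $\dot{\mf g}_N$ is its zero fibre, and that nilpotency of $X$ is equivalent to the vanishing of this component. Hence scaling this component to zero gives a $G\times\C^\times$-equivariant deformation retraction of $\dot{\mf g}$ onto $\dot{\mf g}_N$, and likewise of $\dot{\mf g}\times_{\mf g}\dot{\mf g}$ onto $\dot{\mf g}_N\times_{\mf g_N}\dot{\mf g}_N$; so the restriction induces an isomorphism on equivariant homology, which is the fibre-product version of \eqref{eq:1.1}. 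Functoriality of convolution upgrades this to an algebra isomorphism, and because it identifies the two copies of $\mc O(\mf t\oplus\C)$ and matches the $W_{q\cE}$-strata, the composite of the two arrows is again the structure map of $\mh H(G,M,q\cE)$; naturality in $(G,M,q\cE)$ and in the $\C^\times$-action is then a routine check.

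I expect the main obstacle to be the disconnected, twisted bookkeeping in the first arrow. When $G$ is disconnected, $W_{q\cE}$ is in general not a Coxeter group, the cocycle $\natural_{q\cE}$ genuinely occurs, and the rank-one reductions must be carried out inside quasi-Levi subgroups rather than genuine Levi subgroups; identifying the cocycle and parameters produced geometrically with those in the abstract presentation \eqref{eq:1.3} is where the real work sits. All of this is done in \cite{Lus-Cusp1,Lus-Cusp2} for connected $G$ and in \cite{AMS2} in general, and is packaged as \cite[Theorem 2.2]{SolSGHA}; so the proof here amounts to invoking those results, together with the compatibility of the two arrows described above.
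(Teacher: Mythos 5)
In the paper Theorem \ref{thm:1.2} carries no proof at all: it is recalled verbatim from \cite[Theorem 2.2]{SolSGHA}, which in turn rests on \cite{Lus-Cusp1,Lus-Cusp2,AMS2} --- exactly the reduction your final paragraph makes, so your proposal takes essentially the same route as the paper. (Only note that some details of your heuristic sketch are not how the cited proofs actually run: $q\cE$ need not be of rank one, and the proposed $G \times \C^\times$-equivariant retraction of $\dot{\mf g} \times_{\mf g} \dot{\mf g}$ onto $\dot{\mf g}_N \times_{\mf g_N} \dot{\mf g}_N$ is not well defined, since the two $\mf t$-components of a point of the fibre product differ in general.)
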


Consider the subgroup $N_G (M,q\cE) G^\circ = N_G (P,M,q\cE) G^\circ$ of $G$. It is known from
\cite[(90)]{AMS2} that 
\[
\mh H (G,M,q\cE) = \mh H (N_G (P,M,q\cE) G^\circ, M ,q\cE) .
\]
Moreover, by \cite[Lemma 3.21]{AMS2} the relevant sets of parameters for these two algebras are
in natural bijection. Therefore we may, and will, assume without loss of generality:
\begin{cond}\label{cond:2.1}
$G$ equals $N_G (P,M,q\cE) G^\circ$, or equivalently $N_G (M)$ stabilizes $q\cE$.
\end{cond}

\section{Formal completion at a central character}
\label{sec:completion}

We want to complete $\mh H (G,M,q\cE) \cong \End^*_{\mc D^b_{G \times \C^\times}(\mf g)}(K)$ with respect
to (the kernel of) a central character. Recall from \cite[Lemma 2.3 and \S 4]{AMS2} that 
\begin{equation}\label{eq:2.1}
Z(\mh H (G,M,q\cE)) \supset \mc O (\mf t \oplus \C)^{W_{q\cE}} = 
\mc O (\mf t / W_{q \cE}) \otimes_\C \C [\mb r] .
\end{equation}
In many cases \eqref{eq:2.1} is an equality, namely whenever $W_{q\cE}$ acts faithfully on $\mf t$. 

\begin{lem}\label{lem:2.6}
The algebra $\End^*_{\mc D^b_{G^\circ \times \C^\times}(\mf g)}(K) \cong 
\End^*_{\mc D^b_{G^\circ \times \C^\times}(\mf g_N)}(K_N)$ is finitely generated as a module over
the Noetherian ring $\mc O (\mf t \oplus \C)^{W_{q\cE}}$.
\end{lem}
\begin{proof}
With minor variations, the arguments for Theorem \ref{thm:1.2}, based on \cite{Lus-Cusp1,Lus-Cusp2,AMS2, 
SolSGHA}, also apply here. They show that the two algebras in the statement are isomorphic, and that 
\begin{equation}\label{eq:2.14}
\End^*_{\mc D^b_{G^\circ \times \C^\times}(\mf g)}(K) \cong \mc O (\mf t \oplus \C) \otimes 
\End^0_{\mc D^b_{G^\circ \times \C^\times}(\mf g)}(K)
\end{equation}
as $\mc O (\mf t \oplus \C)$-modules. Since $K$ is a finite direct sum of analogous objects for $G^\circ$,
the algebra $\End^0_{\mc D^b_{G^\circ \times \C^\times}(\mf g)}(K)$ has finite dimension. Hence
\eqref{eq:2.14} is finitely generated as module over $\mc O (\mf t \oplus \C)$. The latter is a finitely 
generated Noetherian algebra and it is integral over $\mc O (\mf t \oplus \C)^{W_{q\cE}}$, so has 
finite rank over $\mc O (\mf t \oplus \C)^{W_{q\cE}}$. 
\end{proof}

To localize in a geometric way, we need to interpret \eqref{eq:2.1} in terms of equivariant homology.
By \cite[\S 1.11]{Lus-Cusp1} there are natural isomorphisms
\begin{equation}\label{eq:2.2}
H_{G \times \C^\times}^* (\mr{pt}) \cong \mc O (\mf g \oplus \C)^{G \times \C^\times} \cong
\mc O (\mf g /\!/ G) \otimes_\C \C [\mb r] .
\end{equation}
The algebra $H_{G \times \C^\times}^* (\mr{pt})$ acts naturally on
$\End^*_{\mc D^b_{G \times \C^\times}(\mf g)}(K)$
by the product in equivariant homology \cite[\S 1.20]{Lus-Cusp2}. That determines a homomorphism
\begin{equation}\label{eq:2.25}
H_{G \times \C^\times}^* (\mr{pt}) \to Z \big( \End^*_{\mc D^b_{G \times \C^\times}(\mf g)}(K) \big)
\cong  \mc O (\mf t \oplus \C)^{W_{q\cE}} .
\end{equation}
Let $\gamma_v : SL_2 (\C) \to M$ be an algebraic homomorphism with d$\gamma_v \matje{0}{1}{0}{0} = v$.
With $\sigma_v := \textup{d} \gamma_v \matje{1}{0}{0}{-1} \in \mf m$ we define an injection
\[
\begin{array}{cccc}
\Sigma_v : & \mf t \oplus \C & \to & \mf m \oplus \C  \\
& (\sigma_0,r) & \mapsto & (\sigma_0 + r \sigma_v,r) 
\end{array} .
\] 

\begin{lem}\label{lem:2.2}
\enuma{
\item The map 
\[
\mf t / W_{q \cE} \times \C = \Irr (\mc O (\mf t \oplus \C)^{W_{q\cE}}) \to 
\Irr ( H^*_{G \times \C^\times}(\mr{pt})) = \mf g /\!/ G \times \C 
\]
dual to \eqref{eq:2.25} is injective and equals the map induced by $\Sigma_v$.
\item \hspace{-2mm} The support of $\End^*_{\mc D^b_{G \times \C^\times}(\mf g)}(K)$ as 
$H_{G \times \C^\times}^* (\mr{pt})$-module is $\mr{Ad}(G) \Sigma_v (\mf t \oplus \C) /\!/ \mr{Ad}(G)$.
}
\end{lem}
\begin{proof}
(a) This follows from \cite[Proposition 1.4]{AMS3} upon specializing all coordinates from
$\vec{r}$ to $r$.\\
(b) Let $T_M$ be a maximal torus of $M^\circ$, whose Lie algebra $\mf t_M$ contains 
$\mf t \oplus \C \sigma_v$. By part (a) 
\[
(\mf g \oplus \C) /\!/ G \cong (\mf t_M \oplus \C) / W(G^\circ ,T_M) .
\] 
Since $\Sigma_v (\mf t \oplus \C)$ is a closed subset of $\mf t_M \oplus \C$, 
$\mr{Ad}(G) \Sigma_v (\mf t \oplus \C) /\!/ \mr{Ad}(G)$ is closed in $(\mf g \oplus \C) /\!/ G$.
Now the statement is a consequence of \eqref{eq:2.1}, \eqref{eq:2.2} and part (a). 
\end{proof}

Lemma \ref{lem:2.2} entails that we can formally complete 
$\End^*_{\mc D^b_{G \times \C^\times}(\mf g)}(K)$ with
respect to elements of $\mf g /\!/ G \times \C$ that come from $\mf t / W_{q\cE} \times \C$.
With the techniques from \cite[\S 4]{Lus-Cusp2}, that can be done geometrically. In Appendix
\ref{sec:B} we discuss why these techniques apply in the setting of \cite[\S 8]{Lus-Cusp2}, 
which is a special case of our current setting.

Fix $(\sigma,r) \in \Ad(G) \Sigma_v (\mf t \times \C)$ and put
\[
C = Z_{G \times \C^\times} (\sigma,r) = Z_G (\sigma) \times \C^\times .
\]
The inclusion $\mf c = \mr{Lie}(C) \subset \mf g$ makes $H_C^* (\pt)$ into a module for
\[
H_{G \times \C^\times}^* (\pt) = \mc O (\mf g)^G \otimes \C [\mb r] .
\]
Further, any $G \times \C^\times$-equivariant sheaf can be regarded as a $C$-equivariant sheaf. 
Like in \eqref{eq:2.25} we obtain a graded algebra homomorphism
\[
H_C^* (\pt) \to \End^*_{\mc D^b_C (\mf g)}(K) .
\]
That induces a graded algebra homomorphism
\begin{equation}\label{eq:2.4}
H_C^* (\pt) \underset{H_{G \times \C^\times}^* (\pt)}{\otimes} \End^*_{\mc D^b_{G \times \C^\times}
(\mf g)}(K) \longrightarrow H_C^* (\pt) \underset{H_C^* (\pt)}{\otimes} \End^*_{\mc D^b_{C}(\mf g)}(K) .
\end{equation}
We denote the completion of $H_{G \times \C^\times}^* (\pt)$ with respect to the maximal ideal 
determined by $(\sigma,r)$ as 
\[
\hat{H}^*_{G \times \C^\times} (\pt)_{\sigma,r} = 
\hat{\mc O} (\mf g \oplus \C /\!/ G \times \C^\times)_{\sigma,r}
\] 
We define $\hat{H}^*_C (\pt)_{\sigma,r} = \hat{\mc O}(\mf c /\!/ C)_{\sigma,r}$ analogously. 

\begin{prop}\label{prop:2.3}
\enuma{
\item The natural map $H^*_{G \times \C^\times}(\pt) \to H_C^* (\pt)$ induces an algebra isomorphism
$\hat{H}^*_{G \times \C^\times} (\pt)_{\sigma,r} \to \hat{H}^*_C (\pt)_{\sigma,r}$.
\item Part (a) and \eqref{eq:2.4} induce an isomorphism of $\hat{H}^*_C (\pt)_{\sigma,r}$-algebras
\[
\hat{H}^*_{G \times \C^\times} (\pt)_{\sigma,r} \underset{H_{G \times \C^\times}^* (\pt)}{\otimes} 
\End^*_{\mc D^b_{G \times \C^\times}(\mf g)}(K) \longrightarrow \hat{H}^*_C (\pt)_{\sigma,r}
\underset{H_C^* (\pt)}{\otimes} \End^*_{\mc D^b_{C}(\mf g)}(K) .
\]
\item The graded algebra $\End^*_{\mc D^b_{C}(\mf g)}(K)$ is Noetherian and only has terms in
even degrees $\geq 0$.
\item Parts (b) and (c) also hold with $(\mf g_N, K_N)$ instead of $(\mf g, K)$.
}
\end{prop}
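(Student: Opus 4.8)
The plan is to treat the four parts in order, since each feeds into the next. For part (a), I would start from the concrete description \eqref{eq:2.2}: $H^*_{G\times\C^\times}(\pt)\cong\mc O(\mf g\oplus\C)^{G\times\C^\times}$ and $H^*_C(\pt)\cong\mc O(\mf c\oplus\C)^C$, the second via the analogue of \cite[\S1.11]{Lus-Cusp1} applied to $C=Z_G(\sigma)\times\C^\times$. The point $(\sigma,r)$ lies in $\mf c$, and the natural map is restriction of invariant functions along $\mf c\hookrightarrow\mf g$. Completing at $(\sigma,r)$, I want to invoke Luna's slice theorem (or the Bardsley--Richardson version over $\C$): since $Z_G(\sigma)$ is the stabilizer of the semisimple element $\sigma$, the adjoint quotient $\mf g/\!/G$ is étale-locally near the image of $\sigma$ isomorphic to $\mf c/\!/C$ (here using that $\sigma$ is semisimple, so its $G$-orbit is closed and the slice argument applies). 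Tensoring with $\C[\mb r]$ and completing at $r$ is harmless. This gives the isomorphism of completed local rings in (a).

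For part (b), I would factor the map in \eqref{eq:2.4} and apply (a) to the left tensor factor. The content is that completing $\End^*_{\mc D^b_{G\times\C^\times}(\mf g)}(K)$ along $(\sigma,r)$ can be computed after restricting equivariance from $G\times\C^\times$ to $C$ — i.e.\ that the natural base-change map
\[
\hat H^*_C(\pt)_{\sigma,r}\underset{H^*_C(\pt)}{\otimes}\End^*_{\mc D^b_C(\mf g)}(K)\ \cong\ \hat H^*_C(\pt)_{\sigma,r}\underset{H^*_{G\times\C^\times}(\pt)}{\otimes}\End^*_{\mc D^b_{G\times\C^\times}(\mf g)}(K)
\]
is an isomorphism. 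This is precisely the type of statement proved in \cite[\S4]{Lus-Cusp2}, using the fixed-point/hyperbolic-localization comparison between $G\times\C^\times$-equivariant cohomology near the central character cut out by $\sigma$ and $Z_G(\sigma)\times\C^\times$-equivariant cohomology; the hypothesis ``$\sigma$ semisimple'' ensures $Z_G(\sigma)$ is reductive and the relevant Atiyah--Bott / Goresky--Kottwitz--MacPherson localization argument goes through. As the introduction flags, one should cite Appendix \ref{sec:B} for why the techniques of \cite[\S4]{Lus-Cusp2} apply in the present (possibly disconnected, twisted) generality — that appendix, together with \cite[\#121]{Lus-Corr}, is exactly where the technical gap was repaired.

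For part (c), Noetherianity: by Theorem \ref{thm:1.2} the uncompleted algebra $\End^*_{\mc D^b_{G\times\C^\times}(\mf g)}(K)\cong\mh H(G,M,q\cE)$ is a finitely generated module over its centre $\mc O(\mf t\oplus\C)^{W_{q\cE}}$, hence Noetherian; the $C$-equivariant version $\End^*_{\mc D^b_C(\mf g)}(K)$ is then finite over $H^*_C(\pt)=\mc O(\mf c\oplus\C)^C$ by the decomposition-theorem description of $K$ as a sum of shifted IC-sheaves (so $\End^*$ is computed by hypercohomology, finite over $H^*_C(\pt)$ because $\mf g$ retracts equivariantly and the cohomology is finitely generated), and $H^*_C(\pt)$ is a finitely generated $\C$-algebra, hence Noetherian. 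That it sits only in even degrees follows from the parity vanishing for the IC-sheaves appearing in $K$ (the stalks of $K$ and the Ext-groups between its summands vanish in odd degrees, a standard consequence of the cleanness/cuspidality of $q\cE$ together with the fact that the relevant flag-variety fibrations have only even cohomology), exactly as in \cite[\S1.20]{Lus-Cusp2}. Finally, part (d): every step above used only formal properties of $K$ as a semisimple, perverse, $G\times\C^\times$-equivariant complex together with the homology-quotient description of $H^*_{G\times\C^\times}(\pt)$, and by \cite[\S2.2]{SolSGHA} the complex $K_N$ is the pullback of $K$ to $\mf g_N$, a semisimple perverse complex with the same endomorphism algebra; so one repeats the arguments verbatim with $(\mf g_N,K_N)$ in place of $(\mf g,K)$, noting that $\mf g_N$ is $G\times\C^\times$-stable and the slice/localization inputs are unaffected. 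The main obstacle I expect is part (b): verifying that the base-change/localization isomorphism of \cite[\S4]{Lus-Cusp2} survives in the disconnected and twisted setting — this is where the delegated argument in Appendix \ref{sec:B} is doing the real work, and where one must be careful that $C=Z_G(\sigma)\times\C^\times$ genuinely captures the formal neighbourhood, using that $\sigma$ (being conjugate into $\mf t$) is semisimple.
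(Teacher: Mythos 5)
Your part (a) is fine and is a genuinely different route: instead of quoting Lusztig's connected-group statement \cite[4.3.(a)]{Lus-Cusp2} and then extending it to disconnected $G$ by taking invariants under the various component groups as in \eqref{eq:2.5}, you invoke Luna's \'etale slice theorem at the closed orbit $\Ad (G)\sigma$; since Luna's theorem holds for reductive (not necessarily connected) groups and the full stabilizer is $Z_G(\sigma)$, this does give the isomorphism of completed local rings directly, with the disconnectedness of $G$ absorbed into the slice argument.

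The genuine gap is in part (b). You reduce it to ``the type of statement proved in \cite[\S 4]{Lus-Cusp2}'' and assert that Appendix \ref{sec:B} is exactly where the disconnected, twisted generality is handled. It is not: Propositions \ref{prop:B.1} and \ref{prop:B.2} are stated and proved only for \emph{connected} reductive groups, and what the appendix repairs is the failure of the odd-cohomology-vanishing hypothesis \eqref{eq:B.1} in Lusztig's localization theorem -- it says nothing about component groups; Lusztig's \S 4 is likewise a connected-group result. So your sketch supplies no argument for the case that actually requires work, namely disconnected $G$ and hence disconnected $C = Z_G (\sigma) \times \C^\times$. The paper's proof of (b) consists almost entirely of this reduction: apply Proposition \ref{prop:B.1} to $G^\circ \times \C^\times$ and $C^\circ$, restrict the resulting isomorphism to $Z_G (\sigma)$-invariants in the first tensor factor using \eqref{eq:2.5}, take $C / C^\circ$-invariants to pass from $\End^*_{\mc D^b_{C^\circ}(\mf g)}(K)$ to $\End^*_{\mc D^b_{C}(\mf g)}(K)$, and then -- the subtle step \eqref{eq:2.7} -- identify $\End^*_{\mc D^b_{Z_G (\sigma) G^\circ \times \C^\times}(\mf g)}(K)$ with $\End^*_{\mc D^b_{G \times \C^\times}(\mf g)}(K)$ after completion, using that the elements of $G / Z_G (\sigma) G^\circ$ move $\sigma$ to distinct points of $\mf g /\!/ Z_G (\sigma) G^\circ$, so that the weaker equivariance is automatically promoted to $G$-equivariance near $(\sigma,r)$. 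None of this can be outsourced to the appendix, and it is absent from your proposal. A smaller, related issue: in (c) you assert module-finiteness of $\End^*_{\mc D^b_C (\mf g)}(K)$ over $H^*_C (\pt)$ and odd-degree vanishing directly from parity of IC-stalks, but these are exactly the points the paper \emph{deduces} from part (b) together with Theorem \ref{thm:1.2} (the $G \times \C^\times$-side is the graded Hecke algebra, visibly even and finite over its centre); as written your (c) rests on unproved claims, though it goes through as you indicate once (b) is established properly, and then (d) follows as in the paper.
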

\begin{proof}(a)
According to \cite[4.3.(a)]{Lus-Cusp2} this holds for connected groups, so for 
$G^\circ \times \C^\times$ and $C^\circ = Z_G (\sigma)^\circ \times \C^\times$. 
From $H_G^* (\pt) = H_{G^\circ}^* (\pt)^{G / G^\circ}$ \cite[\S 1.9]{Lus-Cusp1} we deduce that 
\begin{equation}\label{eq:2.5}
\begin{aligned}
\hat{H}^*_{G \times \C^\times} (\pt)_{\sigma,r} & 
= \Big( \bigoplus\nolimits_{g \in G / Z_G (\sigma) G^\circ} 
\hat{H}^*_{G \times \C^\times} (\pt)_{\Ad (g)\sigma,r} \Big)^{G / G^\circ} \\
& \cong (\hat{H}^*_{G^\circ \times \C^\times} (\pt)_{\sigma, r} )^{Z_G (\sigma) G^\circ / G^\circ} 
\; \cong \; ( \hat{H}^*_{C^\circ} (\pt)_{\sigma,r} )^{Z_G (\sigma)} \\
& = ( \hat{H}^*_{C^\circ} (\pt)_{\sigma,r} )^{C / C^\circ} \; = \; \hat{H}^*_{C} (\pt)_{\sigma,r} .
\end{aligned}
\end{equation}
(b) Part (a) and Proposition \ref{prop:B.1} show this for connected algebraic groups: there
is a natural $C$-equivariant isomorphism of $\hat H^*_{C^\circ} (\pt)_{\sigma,r}$-algebras
\begin{equation}\label{eq:2.17}
\hat{H}^*_{G^\circ \times \C^\times} (\pt)_{\sigma,r} \underset{H_{G^\circ \times \C^\times}^* 
(\pt)}{\otimes} \End^*_{\mc D^b_{G^\circ \times \C^\times}(\mf g)}(K) \to \hat{H}^*_{C^\circ} 
(\pt)_{\sigma,r} \underset{H_{C^\circ}^* (\pt)}{\otimes} \End^*_{\mc D^b_{C^\circ}(\mf g)}(K) .
\end{equation}
By Lemma \ref{lem:2.6}, $\End^*_{\mc D^b_{G^\circ \times \C^\times}(\mf g)}(K)$ has finite rank 
over $\mc O (\mf t \oplus \C)^{W_{q\cE}}$. Lemma \ref{lem:2.2}.a implies that \eqref{eq:2.25} is 
surjective, so $\End^*_{\mc D^b_{G^\circ \times \C^\times}(\mf g)}(K)$ also has 
finite rank as $H^*_{G \times \C^\times} (\pt)$-module. Consequently both sides of \eqref{eq:2.17}
are finitely generated over $\hat{H}^*_{G^\circ \times \C^\times} (\pt)_{\sigma,r}$.
Consider the $C$-stable maximal ideal 
\[
J = \ker (\mr{ev}_{\sigma,r}) \subset \hat{H}^*_{G^\circ \times \C^\times} (\pt)_{\sigma,r} 
\cong \hat{H}^*_{C^\circ} (\pt)_{\sigma,r}. 
\]
For $L \in \{ C^\circ,C, G^\circ \times \C^\times, Z_G (\sigma) G^\circ \times \C^\times \}$ we write
\[
J_L = J \cap H^*_L (\pt) = \ker (\mr{ev}_{\sigma,r} : H^*_L (\pt) \to \C) .
\]
For any $n \in \N$ we can divide out the submodules generated by $J^n$ on both sides 
of \eqref{eq:2.17}, which yields $C$-equivariant isomorphisms 
\begin{equation}\label{eq:2.6}
\begin{aligned}
& \End^*_{\mc D^b_{G^\circ \times \C^\times}(\mf g)}(K) \big/ J^n_{G^\circ \times \C^\times} 
\End^*_{\mc D^b_{G^\circ \times \C^\times}(\mf g)}(K) \cong \\
& \hat{H}^*_{G^\circ \times \C^\times} (\pt)_{\sigma,r} \! \underset{H_{G^\circ \times \C^\times}^* 
(\pt)}{\otimes} \hspace{-2mm} \End^*_{\mc D^b_{G^\circ \times \C^\times}(\mf g)}(K) \big/
J^n_{G^\circ \times \C^\times} \! \underset{H_{G^\circ \times \C^\times}^* 
(\pt)}{\otimes} \hspace{-2mm} \End^*_{\mc D^b_{G^\circ \times \C^\times}(\mf g)}(K) \hspace{-3mm} \\
& \longrightarrow \hat{H}^*_{C^\circ} (\pt)_{\sigma,r} \underset{H_{C^\circ}^* (\pt)}{\otimes} 
\End^*_{\mc D^b_{C^\circ}(\mf g)}(K) \big/ J^n_{C^\circ} \underset{H_{C^\circ}^* (\pt)}{\otimes} 
\End^*_{\mc D^b_{C^\circ}(\mf g)}(K) \cong \\
& \hspace{9mm} \End^*_{\mc D^b_{C^\circ}(\mf g)}(K) \big/ J^n_{C^\circ} \End^*_{\mc D^b_{C^\circ}(\mf g)}(K) .
\end{aligned}
\end{equation}
The finite group $C / C^\circ \cong Z_G (\sigma) G^\circ / G^\circ$ acts naturally on all terms in
\eqref{eq:2.6}. Ave\-ra\-ging over this group, we obtain an isomorphism 
\begin{equation}\label{eq:2.7}
\begin{aligned}
& \End^*_{\mc D^b_{Z_G (\sigma) G^\circ \times \C^\times}(\mf g)}(K) / J^n_{Z_G (\sigma) G^\circ 
\times \C^\times} \End^*_{\mc D^b_{Z_G (\sigma) G^\circ \times \C^\times}(\mf g)}(K) \cong \\
& \Big( \End^*_{\mc D^b_{G^\circ \times \C^\times}(\mf g)}(K) / J^n_{G^\circ \times \C^\times}
\End^*_{\mc D^b_{G^\circ \times \C^\times}(\mf g)}(K) \Big)^{Z_G (\sigma) G^\circ / G^\circ} \longrightarrow \\
& \Big( \End^*_{\mc D^b_{C^\circ}(\mf g)}(K) / J^n_{C^\circ} \End^*_{\mc D^b_{C^\circ}(\mf g)}(K) 
\Big)^{C / C^\circ} \cong \\
& \End^*_{\mc D^b_{C}(\mf g)}(K) / J^n_{C} \End^*_{\mc D^b_{C}(\mf g)}(K) .
\end{aligned}
\end{equation}
Since we are dealing with finitely generated modules, the inverse limit of the instances of 
\eqref{eq:2.7} for $n \in \N$ is a natural isomorphism of $\hat H^*_{C} (\pt)_{\sigma,r}$-algebras
\begin{multline}\label{eq:2.3}
\hat{H}^*_{Z_G (\sigma) G^\circ \times \C^\times} (\pt)_{\sigma,r} \underset{H_{Z_G (\sigma) G^\circ 
\times \C^\times}^* (\pt)}{\otimes} \End^*_{\mc D^b_{Z_G (\sigma) G^\circ \times \C^\times}(\mf g)}(K) \\
\longrightarrow \quad
\hat{H}^*_C (\pt)_{\sigma,r} \underset{H_C^* (\pt)}{\otimes} \End^*_{\mc D^b_{C}(\mf g)}(K) .
\end{multline}
A computation analogous to \eqref{eq:2.5} shows that the domain of \eqref{eq:2.3} is 
naturally isomorphic to
\[
\hat{H}^*_{G \times \C^\times} (\pt)_{\sigma,r} \underset{H_{G \times \C^\times}^* (\pt)}{\otimes} 
\End^*_{\mc D^b_{G \times \C^\times}(\mf g)}(K) .
\]
(c) As in the proof of part (b) one sees that $\mh H (G,M,q\cE)$ has 
finite rank as $H^*_{G \times \C^\times} (\pt)$-module. 
Fix a finite set of generators $F$ and map it to $\tilde F \subset \End^*_{\mc D^b_{C}(\mf g)}(K)$
by \eqref{eq:2.4}. That yields a finite rank $H^*_C (\pt)$-submodule $H^*_C (\pt) \tilde F$ of
$\End^*_{\mc D^b_{C}(\mf g)}(K)$. By parts (a) and (b)
\[
\hat H^*_C (\pt)_{\sigma,r} \tilde F \cong \hat H^*_{G \times \C^\times}(\pt)_{\sigma,r} F
\cong \hat H^*_C (\pt)_{\sigma,r} \End^*_{\mc D^b_{C}(\mf g)}(K)
\]
for all possible $(\sigma,r)$. Since localization is an exact functor, this implies that
\[
\End^*_{\mc D^b_{C}(\mf g)}(K) / H^*_C (\pt) \tilde F
\]
localizes to zero everywhere. Hence this quotient is zero and $\tilde F$ generates  
$\End^*_{\mc D^b_{C}(\mf g)}(K)$ as $H^*_C (\pt)$-module. The image of 
\[
H_C^* (\pt) \to \End^*_{\mc D^b_C (\mf g)}(K).
\]
is Noetherian because it is finitely generated and commutative. Hence $\End^*_{\mc D^b_C (\mf g)}(K)$
is Noetherian as well. 

By Theorem \ref{thm:1.2} the left hand side of part (b) only involves elements of even degrees $\geq 0$. 
Hence so does the right hand side, and its subalgebra $\End^*_{\mc D^b_C (\mf g)}(K)$.\\
(d) This can be shown in the same way as parts (b) and (c).
\end{proof}

\subsection{Localization on $\mf g$ and $\mf g_N$} \
\label{par:localization}

Having completed $\End^*_{\mc D^b_{G \times \C^\times}(\mf g)}(K)$ with respect to a central character,
we want to see how this affects the underlying variety $\mf g$. Let $T_{\sigma,r}$ be the smallest
algebraic torus in $G^\circ \times \C^\times$ whose Lie algebra contains $(\sigma,r)$. Then
$\mf g^{\sigma,r} := \mf g^{T_{\sigma,r}}$ is $C$-stable and
\begin{equation}\label{eq:2.10}
\begin{aligned}
\mf g^{\sigma,r} \; = \; \mf g^{\exp (\C (\sigma,r))} \; & = 
\{ X \in \mf g : e^{-2zr} \Ad (\exp (z \sigma)) X = X \; \forall z \in \C \} \\
& = \{ X \in \mf g : \Ad (\exp (z \sigma)) X = e^{2zr} X \; \forall z \in \C \} \\
& = \{ X \in \mf g : \mr{ad}(\sigma) X = 2 r X \} . 
\end{aligned}
\end{equation}
Notice that $\mf g^{\sigma,r}$ consists entirely of nilpotent elements (unless $r = 0$). We write
\begin{align*}
& \dot{\mf g} = \{ (X,gP) \in \mf g \times G/P : 
\Ad (g^{-1}) X \in \cC_v^M \oplus \mf t \oplus \mf u \} , \\
& \dot{\mf g}^{\sigma,r} = \dot{\mf g}^{T_{\sigma,r}} = 
\dot{\mf g} \cap \big( \mf g^{\sigma,r} \times (G / P)^{\exp (\C \sigma)} \big) .
\end{align*}
Consider the commutative diagram
\begin{equation}\label{eq:2.12}
\begin{array}{ccc}
\dot{\mf g}^{\sigma,r} & \xrightarrow{j_{\sigma,r}} & \dot{\mf g} \\
\downarrow \pr_1 & & \downarrow \pr_1 \\
\mf g^{\sigma,r} & \xrightarrow{j_{\sigma,r}} & \mf g
\end{array},
\end{equation}
where the vertical maps are inclusions. We define 
\[
K_{\sigma,r} = \pr_{1,!} j^*_{\sigma,r} (\dot{q\cE}) \qquad \in \mc D^b_C (\mf g^{\sigma,r}).
\]
Since \eqref{eq:2.12} is often not a pullback diagram, $K_{\sigma,r}$ need not be isomorphic to
$j^*_{\sigma,r} (K) = j^*_{\sigma,r} \pr_{1,!} (\dot{q\cE})$. Nevertheless $K_{\sigma,r}$ can be 
regarded as some kind of restriction of $K$ to $\mf g^{\sigma,r}$. According to \cite[\S 8.12]{Lus-Cusp2}
\begin{equation}\label{eq:2.21}
K_{\sigma,r} = \mr{pr}_{1,!} \IC_{Z_G (\sigma) \times \C^\times} 
\big( \mf g^{\sigma,r} \times (G / P)^{\exp (\C \sigma)},j^*_{\sigma,r} \dot{q\cE} \big) ,
\end{equation}
where now 
\begin{equation}\label{eq:2.13}
\mr{pr}_1 : \mf g^{\sigma,r} \times (G / P)^{\exp (\C \sigma)} \to \mf g^{\sigma,r} 
\quad \text{is proper.}
\end{equation}
As noted in  \cite[\S 5.3]{Lus-Cusp2} (where $K$ is called $B$ and pullbacks to 
$T_{\sigma,r}$-fixed subvarieties are indicated by a superscript tilde), this implies that 
$K_{\sigma,r}$ is a semisimple complex, that is, a direct sum of degree shifts of simple 
perverse sheaves on $\mf g^{\sigma,r}$.
Notice that for $(\sigma,r) = (0,0)$ we have 
\begin{equation}\label{eq:2.11}
\mf g^{0,0} = \mf g ,\quad \dot{\mf g}^{0,0} = \dot{\mf g} \quad \text{and} \quad K_{0,0} = K. 
\end{equation}
Thus the objects in Theorem \ref{thm:1.2} are special cases of their localized versions in this 
paragraph. Write 
\[
\mf g_N^{\sigma,r} = \mf g^{\sigma,r} \cap \mf g_N ,\qquad 
\dot{\mf g}_N^{\sigma,r} = \dot{\mf g}_N^{T_{\sigma,r}} =
\dot{\mf g}_N \cap \big( \mf g^{\sigma,r} \times (G / P)^{\exp (\C \sigma)} \big) .
\] 
Let $j_{N,\sigma,r} : \dot{\mf g}_N^{\sigma,r} \to \dot{\mf g}$ be the inclusion and define
\[
K_{N,\sigma,r} = (\pr_{1,N})_! j_{N,\sigma,r}^* (\dot{q\cE}_N) \qquad \in \mc D^b_C (\mf g_N^{\sigma,r}) .
\]
From the diagram
\[
\xymatrix{
\dot{\mf g}^{\sigma,r}_N \ar[r]^{j_{N,\sigma,r}} \ar[d]^{\mr{pr}_{1,N}} & 
\dot{\mf g}^{\sigma,r} \ar[d]^{\mr{pr}_1} \\
\mf g_N^{\sigma,r} \ar[r]^{j_{N,\sigma,r}} & \mf g^{\sigma,r}
}
\]
we see with base change \cite[Theorem 3.4.3]{BeLu} that $K_{N,\sigma,r}$ is the pullback of
$K_{\sigma,r}$ to $\mf g_N^{\sigma,r}$. We record that 
\[
\begin{array}{cccccccccc}
\mf g_N^{\sigma,r} & = & \mf g^{\sigma,r} , & \dot{\mf g}_N^{\sigma,r} & = &
\dot{\mf g}^{\sigma,r} , & K_{N,\sigma,r} & = & K_{\sigma,r} & \text{ for } r \neq 0 ,\\
\mf g_N^{\sigma,0} & = & Z_{\mf g} (\sigma)_N , & \dot{\mf g}_N^{\sigma,0} & = &
\dot{Z_{\mf g} (\sigma)}_N , & K_{N,\sigma,0} & = & K_{N,\sigma} & \text{ for } r = 0 .
\end{array}
\]
As both $K_{\sigma,r}$ (see above) and $K_{N,\sigma}$ are semisimple complexes 
\cite[Lemma 2.8]{SolSGHA}, $K_{N,\sigma,r}$ is always a semisimple object of 
$\mc D^b_C (\mf g_N^{\sigma,r})$.

\begin{prop}\label{prop:2.4}
\enuma{
\item There exists a natural isomorphism of $\hat{H}^*_C (\pt)_{\sigma,r}$-algebras
\[
\hat{H}^*_C (\pt)_{\sigma,r} \underset{H_C^* (\pt)}{\otimes} 
\End^*_{\mc D^b_{C}(\mf g^{\sigma,r})}(K_{\sigma,r}) \longrightarrow 
\hat{H}^*_C (\pt)_{\sigma,r} \underset{H_C^* (\pt)}{\otimes} \End^*_{\mc D^b_{C}(\mf g)}(K) .
\]
\item The algebras in part (a) are naturally isomorphic with
\[
\hat{H}^*_C (\pt)_{\sigma,r} \underset{H_C^* (\pt)}{\otimes} 
\End^*_{\mc D^b_{C}(\mf g_N^{\sigma,r})}(K_{N,\sigma,r}) \cong
\hat{H}^*_C (\pt)_{\sigma,r} \underset{H_C^* (\pt)}{\otimes} \End^*_{\mc D^b_{C}(\mf g_N)}(K_N) .
\]
\item The graded algebras $\End^*_{\mc D^b_{C}(\mf g^{\sigma,r})}(K_{\sigma,r})$ and 
$\End^*_{\mc D^b_{C}(\mf g_N^{\sigma,r})}(K_{N,\sigma,r})$ are Noetherian and only have terms
in even degrees.
}
\end{prop}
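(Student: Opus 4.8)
I would deduce all three parts from the corresponding statements on $\mf g$ in Proposition \ref{prop:2.3}, using as the main engine the concentration (localization) theorem in equivariant Borel--Moore homology that underlies \cite[\S 4,\S 5,\S 8]{Lus-Cusp2} and Appendix \ref{sec:B}. For part (a) the first step is to rewrite both sides as convolution algebras: via \cite{Lus-Cusp1,Lus-Cusp2,AMS2} and the proof of Theorem \ref{thm:1.2}, the algebra $\End^*_{\mc D^b_C(\mf g)}(K)$ is identified with the $C$-equivariant Borel--Moore homology $H^C_*(Z)$ of the Steinberg-type variety $Z = \dot{\mf g} \times_{\mf g} \dot{\mf g}$ (with coefficients induced by $\dot{q\cE}$), equipped with convolution product, and likewise $\End^*_{\mc D^b_C(\mf g^{\sigma,r})}(K_{\sigma,r}) \cong H^C_*(Z^{\sigma,r})$ with $Z^{\sigma,r} = \dot{\mf g}^{\sigma,r} \times_{\mf g^{\sigma,r}} \dot{\mf g}^{\sigma,r}$, here using the properness of $\mf g^{\sigma,r} \times (G/P)^{\exp(\C\sigma)} \to \mf g^{\sigma,r}$ recalled after \eqref{eq:2.12}.

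The key geometric point for part (a) is that the torus $T_{\sigma,r}$ is central in $C$, acts on $Z$, and its fixed-point locus is exactly $Z^{\sigma,r}$: the formation of $T_{\sigma,r}$-fixed points commutes with the fibre products and with the equations defining $\dot{\mf g}$, and $(G/P)^{T_{\sigma,r}} = (G/P)^{\exp(\C\sigma)}$ since $T_{\sigma,r}$ acts on $G/P$ through $\exp(\C\sigma)$. The concentration theorem then gives that the natural restriction map $\hat H^C_*(Z)_{\sigma,r} \to \hat H^C_*(Z^{\sigma,r})_{\sigma,r}$ is an isomorphism, and it respects the convolution products, so it is an algebra isomorphism. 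This establishes part (a) for the connected groups $G^\circ \times \C^\times$ and $C^\circ$, where it is \cite[\S 5.3,\S 8.12]{Lus-Cusp2} in the form made rigorous in Appendix \ref{sec:B} (Proposition \ref{prop:B.1}); the passage to $G \times \C^\times$ and $C$ is then carried out by taking $C/C^\circ$-invariants, exactly as in the proof of Proposition \ref{prop:2.3}(b), using that the components of $G/Z_G(\sigma)G^\circ$ correspond to distinct classes in $\mf g /\!/ Z_G(\sigma)G^\circ$.

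For part (b), the identification of $\End^*_{\mc D^b_C(\mf g)}(K)$ with $\End^*_{\mc D^b_C(\mf g_N)}(K_N)$ holds already before completion, since $K_N$ is the pullback of $K$ to $\mf g_N$ and the two endomorphism algebras coincide by Theorem \ref{thm:1.2}, so completing both at $(\sigma,r)$ is harmless. For the localized pair: when $r \neq 0$ we noted $\mf g^{\sigma,r}$ consists of nilpotent elements, hence $\mf g_N^{\sigma,r} = \mf g^{\sigma,r}$, $K_{N,\sigma,r} = K_{\sigma,r}$, and there is nothing to show; when $r = 0$ the pairs $(\mf g^{\sigma,0},K_{\sigma,0}) = (Z_{\mf g}(\sigma),K_\sigma)$ and $(\mf g_N^{\sigma,0},K_{N,\sigma,0}) = (Z_{\mf g}(\sigma)_N,K_{N,\sigma})$ are precisely the analogues of $(\mf g,K)$ and $(\mf g_N,K_N)$ for the (possibly disconnected) reductive group $Z_G(\sigma)$, so Theorem \ref{thm:1.2} applied to $Z_G(\sigma)$ (equivalently \cite[Lemma 2.8]{SolSGHA}) gives $\End^*_{\mc D^b_C(Z_{\mf g}(\sigma))}(K_\sigma) \cong \End^*_{\mc D^b_C(Z_{\mf g}(\sigma)_N)}(K_{N,\sigma})$, again before completion. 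Part (c) then follows by combining (a) and (b) with Proposition \ref{prop:2.3}(c),(d) and rerunning its proof: the isomorphisms identify the completed algebras $\hat H^*_C(\pt)_{\sigma,r} \otimes_{H^*_C(\pt)} \End^*_{\mc D^b_C(\mf g^{\sigma,r})}(K_{\sigma,r})$ (and its $K_{N,\sigma,r}$-analogue) with Noetherian algebras concentrated in even degrees, whence $\End^*_{\mc D^b_C(\mf g^{\sigma,r})}(K_{\sigma,r})$ and $\End^*_{\mc D^b_C(\mf g_N^{\sigma,r})}(K_{N,\sigma,r})$ are module-finite over the (finitely generated, commutative, hence Noetherian) image of $H^*_C(\pt)$ and therefore Noetherian; the concentration in even degrees then follows since $\End^*_{\mc D^b_C(\mf g^{\sigma,r})}(K_{\sigma,r})$ is supported at $(\sigma,r)$ as $H^*_C(\pt)$-module (so completion there loses nothing), or, alternatively, by the same parity argument as for $K$, using the semisimplicity of $K_{\sigma,r}$ and $K_{N,\sigma,r}$ and the parity of the simple perverse summands (cf. \cite[\S 8]{Lus-Cusp2}).

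The main obstacle is part (a), and within it the precise form of the concentration theorem for equivariant constructible sheaves -- identifying formal completion at $(\sigma,r)$ with restriction to the $T_{\sigma,r}$-fixed subvariety -- together with its compatibility with the convolution product and with disconnected $G$. This is exactly the point at which the argument of \cite[\S 8]{Lus-Cusp2} needed repair, which is why I would route part (a) through Appendix \ref{sec:B} and Proposition \ref{prop:B.1} rather than citing \cite{Lus-Cusp2} directly; once (a) is secured, parts (b) and (c) are essentially bookkeeping on top of Theorem \ref{thm:1.2} and Proposition \ref{prop:2.3}.
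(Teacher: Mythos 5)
Your plan is correct in outline and follows essentially the same route as the paper: the connected case of (a) is exactly Lusztig's localization \cite[\S 4.9--4.10]{Lus-Cusp2}, made applicable by Appendix \ref{sec:B}, followed by taking $C/C^\circ$-invariants as in Proposition \ref{prop:2.3}; part (b) is the dichotomy $r\neq 0$ versus $r=0$, the latter handled (after arranging $\sigma \in \mf t$) by \cite[Proposition 2.10]{SolSGHA} rather than by Theorem \ref{thm:1.2} applied to $Z_G(\sigma)$, since $K_{\sigma,0}$ is a direct sum over several parabolics; and part (c) reruns Proposition \ref{prop:2.3}.c. Two small slips to note: the concentration map goes from the fixed-point side into the ambient variety (Lusztig's $\Xi : \End^*(K_{\sigma,r}) \to \End^*(K)$, resp.\ $j_!$ in Proposition \ref{prop:B.2}), not a ``restriction'' of Borel--Moore classes, and your alternative argument in (c) that $\End^*_{\mc D^b_C(\mf g^{\sigma,r})}(K_{\sigma,r})$ is supported only at $(\sigma,r)$ over $H^*_C(\pt)$ cannot hold (the support of a graded module is a cone, and the fibre over $0 \in \mf g^{\sigma,r}$ contributes); evenness is instead deduced, as in Proposition \ref{prop:2.3}.c, from the algebra sitting inside its completion, which by part (a) and Theorem \ref{thm:1.2} is concentrated in even degrees.
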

\begin{proof}
(a) In \cite[\S 4.9--4.10]{Lus-Cusp2}, which is applicable by Appendix \ref{sec:B}, this was 
proven under the assumption that $G$ (and hence $C$) is connected. Explicitly, there exists 
an isomorphism of $\hat{H}^*_{C^\circ} (\pt)_{\sigma,r}$-algebras
\[
\hat{H}^*_{C^\circ} (\pt)_{\sigma,r} \underset{H_{C^\circ}^* 
(\pt)}{\otimes} \End^*_{\mc D^b_{C^\circ}(\mf g^{\sigma,r})}(K_{\sigma,r}) \longrightarrow
\hat{H}^*_{C^\circ} (\pt)_{\sigma,r} \underset{H_{C^\circ}^* (\pt)}{\otimes} 
\End^*_{\mc D^b_{C^\circ}(\mf g)}(K) . 
\]
With the same argument as in the proof of Proposition \ref{prop:2.3}.b, we can take 
$C/C^\circ$-invariants on both side, and that replaces all occurences of $C^\circ$ by $C$.\\
(b) By Proposition \ref{prop:2.3}.c and Theorem \ref{thm:1.2} there is a natural isomorphism
\[
\hat{H}^*_C (\pt)_{\sigma,r} \underset{H_C^* (\pt)}{\otimes} \End^*_{\mc D^b_{C}(\mf g)}(K) \cong
\hat{H}^*_C (\pt)_{\sigma,r} \underset{H_C^* (\pt)}{\otimes} \End^*_{\mc D^b_{C}(\mf g_N)}(K_N) .
\] 
When $r \neq 0$, the left hand sides of parts (a) and (b) are the same. 
When $r = 0$, we assume (as we may by Lemma \ref{lem:2.2}) that $\sigma \in \mf t$.
In the notations from \cite[\S 2.3]{SolSGHA} we have
\[
\mf g^{\sigma,0} = Z_{\mf g}(\sigma), \quad \dot{\mf g}^{\sigma,0} = \dot{\mf g}^\sigma \quad
\text{and} \quad K_{\sigma,0} = K_{\sigma} .
\] 
Then \cite[Proposition 2.10]{SolSGHA} provides the final isomorphism 
\[
\hat{H}^*_C (\pt)_{\sigma,r} \underset{H_C^* (\pt)}{\otimes} 
\End^*_{\mc D^b_{C}(\mf g^{\sigma,0})}(K_{\sigma,0}) \to
\hat{H}^*_C (\pt)_{\sigma,r} \underset{H_C^* (\pt)}{\otimes} 
\End^*_{\mc D^b_{C}(\mf g_N^{\sigma,0})}(K_{N,\sigma,0}) .
\]
(c) This can be proven like in Proposition \ref{prop:2.3}.c.
\end{proof}

We let $W_{q\cE}$ act on $\Sigma_v (\mf t \oplus \C)$ by decreeing that $\Sigma_v$ is 
$W_{q\cE}$-equivariant. For $(\sigma,r) \in \Ad (G)\Sigma_v (\mf t \oplus \C)$, let $Z_{\sigma,r}$ 
be the maximal ideal of 
\[
\mc O (\Sigma_v (\mf t \oplus \C)/W_{q\cE}) = \mc O (\mf t / W_{q\cE} \times \C (\sigma_v,1)) 
\cong \mc O (\mf t / W_{q\cE} \times \C) \subset Z(\mh H (G,M,q\cE)) 
\] 
determined by $(\sigma,r)$ via Lemma \ref{lem:2.2}.
Every finite length module $V$ of $\mh H (G,M,q\cE)$ can be decomposed as
\begin{align*}
& V = \bigoplus\nolimits_{(\sigma,r) \in \Sigma_v (\mf t \oplus \C) / W_{q\cE}} V_{\sigma,r} \;, \\
& V_{\sigma,r} = \{ v \in V : Z_{\sigma,r}^n \cdot v = 0 \text{ for some } n \in \N \}. 
\end{align*}
Hence the category of finite length left modules is a direct sum
\begin{equation}\label{eq:2.8}
\Mod_{\mr{fl}} (\mh H (G,M,q\cE)) = \bigoplus\nolimits_{(\sigma,r) \in 
\Sigma_v (\mf t \oplus \C) / W_{q\cE}} \Modf{\sigma,r} (\mh H (G,M,q\cE)) .
\end{equation}
The same holds for right modules:
\begin{equation}\label{eq:2.9}
\mh H (G,M,q\cE) - \Mod_{\mr{fl}} = \bigoplus\nolimits_{(\sigma,r) \in 
\Sigma_v (\mf t \oplus \C) / W_{q\cE}} \mh H (G,M,q\cE) - \Modf{\sigma,r} .
\end{equation}
Let $\hat Z (\mh H (G,M,q\cE))_{\sigma,r}$ be the formal completion of $Z(\mh H (G,M,q\cE))$ 
with respect to $Z_{\sigma,r}$. Then $\Modf{\sigma,r} (\mh H (G,M,q\cE))$ can be identified with
the category of finite length left modules, continuous with respect to the adic topology, of the 
completed algebra
\[
\hat Z (\mh H (G,M,q\cE))_{\sigma,r} \underset{Z (\mh H (G,M,q\cE))}{\otimes} \mh H (G,M,q\cE) .
\]
We use a similar notation for the algebras $\End^*_{\mc D^b_{G \times \C^\times}(\mf g)}(K)$ and \\
$\End^*_{\mc D^b_{Z_G (\sigma) \times \C^\times}(\mf g^{\sigma,r} )}(K_{\sigma,r})$, 
with respect to the maximal ideals determined by $(\sigma,r)$ in $H_{G \times \C^\times}^* (\pt)$ 
and in $H^*_{Z_G (\sigma) \times \C^\times}(\pt)$.

\begin{thm}\label{thm:2.5}
\enuma{
\item There are natural algebra isomorphisms
\begin{align*}
& \hat Z (\mh H (G,M,q\cE))_{\sigma,r} \underset{Z (\mh H (G,M,q\cE))}{\otimes}
\mh H (G,M,q\cE) \; \isom \\
& \hat{H}^*_{G \times \C^\times} (\pt)_{\sigma,r} \underset{H_{G \times \C^\times}^* 
(\pt)}{\otimes} \End^*_{\mc D^b_{G \times \C^\times}(\mf g)}(K) \; \isom \\
& \hat{H}^*_{Z_G (\sigma) \times \C^\times} (\pt)_{\sigma,r} \underset{H_{Z_G (\sigma) \times \C^\times}^* 
(\pt)} {\otimes} \End^*_{\mc D^b_{Z_G (\sigma) \times \C^\times}(\mf g^{\sigma,r})} (K_{\sigma,r}) .
\end{align*}
\item Part (a) induces equivalences of categories
\begin{align*}
\Modf{\sigma ,r} (\mh H (G,M,q\cE)) & \cong 
\Modf{\sigma ,r} \big( \End^*_{\mc D^b_{G \times \C^\times}(\mf g)}(K) \big) \\  
& \cong \Modf{\sigma ,r} \big( \End^*_{\mc D^b_{Z_G (\sigma) \times \C^\times}
(\mf g^{\sigma,r})}(K_{\sigma,r}) \big) ,
\end{align*}
and analogously with right modules.
\item Parts (a) and (b) also hold with $(\mf g_N, K_N)$ instead of $(\mf g, K)$.
} 
\end{thm}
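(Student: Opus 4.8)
The plan is to assemble the theorem from the chain of completed-algebra isomorphisms established in Section \ref{sec:completion}, then convert the last isomorphism into an equivalence of module categories by a formal completion argument. For part (a), I would first identify the three algebras in play. The top algebra is by definition the completion of $\mh H (G,M,q\cE)$ along the central ideal $Z_{\sigma,r}$; via Theorem \ref{thm:1.2} and the homomorphism \eqref{eq:2.25}, the centre $Z(\mh H (G,M,q\cE)) \cong \mc O (\mf t \oplus \C)^{W_{q\cE}}$ receives $H^*_{G \times \C^\times}(\pt)$, and by Lemma \ref{lem:2.2} the support of $\End^*_{\mc D^b_{G \times \C^\times}(\mf g)}(K)$ as a module over $H^*_{G \times \C^\times}(\pt)$ is exactly $\mr{im}(\mf t/W_{q\cE} \to \mf g/\!/G) \times \C$. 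Consequently the $(\sigma,r)$-adic completion over $Z(\mh H (G,M,q\cE))$ agrees with the $(\sigma,r)$-adic completion over $H^*_{G \times \C^\times}(\pt)$ — this is where one uses that completion is exact on finite modules and insensitive to passing to a finite subalgebra over which the module is still finite, together with injectivity from Lemma \ref{lem:2.2}(a). That gives the first isomorphism. The second isomorphism is precisely Proposition \ref{prop:2.3}(b) composed with Proposition \ref{prop:2.4}(a): the former replaces $G \times \C^\times$-equivariance by $C = Z_G(\sigma) \times \C^\times$-equivariance after completion, and the latter replaces $(\mf g, K)$ by $(\mf g^{\sigma,r}, K_{\sigma,r})$; combining with Proposition \ref{prop:2.3}(a) rewrites $\hat H^*_C(\pt)_{\sigma,r}$ as $\hat H^*_{Z_G(\sigma) \times \C^\times}(\pt)_{\sigma,r}$, yielding the stated form.

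For part (b), I would argue that for a Noetherian graded algebra $A$ sitting over a central Noetherian subalgebra $S$ (here $S = H^*_{G \times \C^\times}(\pt)$ or its $C$-analogue, and $A$ one of the $\End^*$-algebras, which are Noetherian with terms only in even degrees by Proposition \ref{prop:2.3}(c) and \ref{prop:2.4}(c)), the category of finite length $A$-modules all of whose irreducible subquotients have $S$-central character $(\sigma,r)$ is equivalent to the category of finite length continuous modules over $\hat S_{\sigma,r} \otimes_S A$: a finite length module is supported at the maximal ideal $(\sigma,r)$ precisely when it is killed by a power of that ideal, hence factors through the completion, and conversely. Applying this to each of the three algebras in part (a) and transporting along the isomorphisms there gives the two equivalences; the right-module versions are identical with $\Hom_A(-,A)$ or by symmetry of the argument.

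Part (c) is immediate: every ingredient cited — Theorem \ref{thm:1.2}, Lemma \ref{lem:2.2}, Propositions \ref{prop:2.3} and \ref{prop:2.4} — is stated in a version valid with $(\mf g_N, K_N)$ and $(\mf g_N^{\sigma,r}, K_{N,\sigma,r})$ in place of $(\mf g, K)$ and $(\mf g^{\sigma,r}, K_{\sigma,r})$, so one reruns the same argument verbatim. The main obstacle I anticipate is not in part (b) or (c) — those are formal once the algebras are known to be Noetherian and concentrated in even degrees — but in pinning down the first isomorphism of part (a) cleanly: one must be careful that the central character $(\sigma,r)$, which a priori is a point of $\mf g/\!/G \times \C$, actually lies in the image of $\mf t/W_{q\cE} \times \C$ (so that the completions are nontrivial and match up), and that the map \eqref{eq:2.25} from equivariant homology really lands in the centre and induces the claimed map $\mf t/W_{q\cE} \to \mf g/\!/G$ on spectra. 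Both points are supplied by Lemma \ref{lem:2.2} and the references to \cite{AMS2, Lus-Cusp2}, but the bookkeeping relating the three different rings over which one completes is the step that needs the most care.
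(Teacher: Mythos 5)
Your proposal is correct and follows essentially the same route as the paper: part (a) is assembled from Theorem \ref{thm:1.2} together with Propositions \ref{prop:2.3} and \ref{prop:2.4} (with Lemma \ref{lem:2.2} and \eqref{eq:2.25} matching the completion over $Z(\mh H (G,M,q\cE))$ with the one over $H^*_{G \times \C^\times}(\pt)$), part (b) is the formal identification of $\Modf{\sigma,r}$ with continuous finite length modules over the completed algebra, and part (c) reruns the argument with the $(\mf g_N, K_N)$ versions of all the cited results. The extra care you devote to the first isomorphism and to the Noetherian/even-degree hypotheses in part (b) is exactly the bookkeeping the paper leaves implicit.
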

\begin{proof}
(a) is a consequence of Theorem \ref{thm:1.2} and Propositions \ref{prop:2.3}, \ref{prop:2.4}.\\
(b) follows directly from (a).\\
(c) The proof is completely analogous to that of parts (a) and (b).
\end{proof}

We point out that the data $(\sigma,r)$ in Theorem \ref{thm:2.5} can be scaled by an arbitrary
$z \in \C^\times$. Namely, 
\begin{equation}\label{eq:2.18}
\mf g^{z \sigma, z r} = \mf g^{\sigma,r} ,\quad \dot{\mf g}^{z \sigma, z r} = 
\dot{\mf g}^{\sigma,r} , \quad K_{z \sigma,z r} = K_{\sigma,r}
\end{equation}
and similarly with subscripts $N$.

\subsection{Twisted graded Hecke algebras with a fixed $r$} \
\label{par:fixed}

So far we mainly considered twisted graded Hecke algebras with a formal variable $\mb r$.
Often we localized $\mb r$ at a complex number $r$, but still we allowed modules
on which $\mb r$ did not act as a scalar. In the Hecke algebras that arise from reductive
$p$-adic groups, $\mb r$ is always specialized to some $r \in \R$, see \cite{SolEnd}.
That prompts us to find versions of our previous results for such algebras.

Fix $r \in \C$ and write
\[
\mh H (G,M,q\cE,r) = \mh H (G,M,q\cE) / (\mb r - r) = \mh H (\mf t, W_{q\cE}, c r, \natural_{q\cE}) .
\]
The centre of $\mh H (G,M,q\cE,r)$ contains $\mc O (\mf t / W_{q\cE})$. It will be convenient to
identify $\mf t$ with $\mf t_r = \mf t + r \sigma_v$ via $\Sigma_v$, and to identify 
$\mf t / W_{q\cE}$ with $\mf t_r / W_{q\cE}$. This enables us to localize $\mh H (G,M,q\cE,r)$ at
$W_{q\cE} \sigma \in \mf t_r / W_{q\cE}$, which is consistent with the previous paragraph.
The irreducible and standard modules of $\mh H (G,M,q\cE,r)$ have already been classified in 
\cite{AMS2} they come from $\mh H (G,M,q\cE)$ by imposing that
$\mb r$ acts as $r$. However, the Ext-groups of two $\mh H (G,M,q\cE,r)$-modules are usually not
isomorphic to their Ext-groups as $\mh H (G,M,q\cE)$-modules.
 
As $\C [\mb r] \cong H^*_{\C^\times}(\pt)$ comes in from the $\C^\times$-actions on our varieties 
and sheaves, it is natural to try to replace $Z_G (\sigma) \times \C^\times$-equivariant sheaves by 
$Z_G (\sigma)$-equivariant sheaves in Paragraph \ref{par:localization}. However, the 
$\C^\times$-actions are there for a reason. Without them, \cite{Lus-Cusp1} would just give
\[
\End^*_{\mc D^b_G (\mf g)}(K) \cong \mc O (\mf t) \rtimes \C [W_{q\cE},\natural_{q\cE}] \cong
\mh H (G,M,q\cE) / (\mb r) ,
\]
and from there one would never get any $r$ in the picture. Therefore we proceed more subtly, first
we formally complete with respect to $(\sigma,r)$ and only then we forget the $\C^\times$-actions.
For $\sigma \in \mf t_r$, Theorem \ref{thm:2.5}.a implies
\begin{equation}\label{eq:8.3}
\begin{aligned}
& \hat{Z} (\mh H (G,M,q\cE,r))_\sigma \underset{Z (\mh H (G,M,q\cE,r))}{\otimes} \mh H (G,M,q\cE,r) \; \cong \\
& \hat{Z} (\mh H (G,M,q\cE))_{\sigma,r} / (\mb r - r) 
\underset{Z (\mh H (G,M,q\cE))}{\otimes} \mh H (G,M,q\cE) \; \cong \\
& \hat{H}^*_{G \times \C^\times} (\pt)_{\sigma,r} / (\mb r - r) 
\underset{H^*_{G \times \C^\times}(\pt)}{\otimes} \End^*_{\mc D^b_{G \times \C^\times} (\mf g)} (K) \; \cong \\
& \hat{H}^*_{Z_G (\sigma) \times \C^\times} (\pt)_{\sigma,r} / (\mb r - r) \underset{H^*_{Z_G (\sigma) \times 
\C^\times}(\pt)}{\otimes} \End^*_{\mc D^b_{Z_G (\sigma) \times \C^\times} (\mf g^{\sigma,r})} (K_{\sigma,r}) .
\end{aligned}
\end{equation}
It is much easier to analyse \eqref{eq:8.3} when $r = 0$, so we settle that case first.

\begin{lem}\label{lem:8.1}
For $\sigma \in \mf t$ there are natural algebra isomorphisms
\begin{align*}
& \hat{Z} (\mh H (G,M,q\cE,0))_\sigma \underset{Z (\mh H (G,M,q\cE,0))}{\otimes} \mh H (G,M,q\cE,0) \; \cong \\
& \hat{H}^*_{Z_G (\sigma)} (\pt)_{\sigma} \underset{H^*_{Z_G (\sigma)}(\pt)}{\otimes} 
\End^*_{\mc D^b_{Z_G (\sigma)} (\mf g^{\sigma,0})} (K_{\sigma,0}) \cong \\
& \hat{H}^*_{Z_G (\sigma)} (\pt)_{\sigma} \underset{H^*_{Z_G (\sigma)}(\pt)}{\otimes} 
\End^*_{\mc D^b_{Z_G (\sigma)} (\mf g_N^{\sigma,0})} (K_{N,\sigma,0}) .
\end{align*}
\end{lem}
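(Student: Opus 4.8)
The strategy is to start from the $r=0$ case of \eqref{eq:8.3} and simplify the right-hand side by getting rid of the $\C^\times$-equivariance, which is now harmless because $\mb r$ has been set to $0$. First I would observe that when $r=0$ the element $(\sigma,0)\in\mr{Lie}(Z_G(\sigma)\times\C^\times)$ lies entirely in $\mr{Lie}(Z_G(\sigma))$, so the smallest torus $T_{\sigma,0}$ containing it sits in $Z_G(\sigma)^\circ$ and the $\C^\times$-factor plays no role in the localization geometry; in particular $\mf g^{\sigma,0}=Z_{\mf g}(\sigma)$ and $\dot{\mf g}^{\sigma,0}=\dot{\mf g}^\sigma$, and these varieties carry the trivial residual $\C^\times$-action only through the original scaling action, which decouples after completion. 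Concretely, I would write $H^*_{Z_G(\sigma)\times\C^\times}(\pt)\cong H^*_{Z_G(\sigma)}(\pt)\otimes_\C\C[\mb r]$, note that dividing by $(\mb r-r)=(\mb r)$ on the completed side kills the entire $\C[\mb r]$-factor, and argue that the remaining algebra $\End^*_{\mc D^b_{Z_G(\sigma)\times\C^\times}(\mf g^{\sigma,0})}(K_{\sigma,0})$, after tensoring with $\hat H^*_{Z_G(\sigma)}(\pt)_\sigma$ over $H^*_{Z_G(\sigma)}(\pt)$ and setting $\mb r=0$, coincides with $\hat H^*_{Z_G(\sigma)}(\pt)_\sigma\otimes_{H^*_{Z_G(\sigma)}(\pt)}\End^*_{\mc D^b_{Z_G(\sigma)}(\mf g^{\sigma,0})}(K_{\sigma,0})$.

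The key technical point is that forgetting the $\C^\times$-equivariance induces an isomorphism on Ext-algebras \emph{after completion at $(\sigma,0)$ and killing $\mb r$}. For this I would invoke the structure of $K_{\sigma,0}$ as a semisimple complex together with the description of $H^*_{H\times\C^\times}(X)$ as $H^*_H(X)\otimes\C[\mb r]$ for the relevant $H$-spaces $X$ (here $\dot{\mf g}^\sigma$ and its strata), so that the forgetful functor $\mc D^b_{Z_G(\sigma)\times\C^\times}\to\mc D^b_{Z_G(\sigma)}$ changes $\mr{Hom}$-complexes only by the extra polynomial variable $\mb r$. Passing to $\mb r=0$ then makes the two $\End^*$-algebras literally agree. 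The third isomorphism in the statement — replacing $\mf g^{\sigma,0}$ by $\mf g_N^{\sigma,0}$ and $K_{\sigma,0}$ by $K_{N,\sigma,0}$ — is obtained exactly as in Proposition \ref{prop:2.4}.b: when $r=0$ one uses that $\mf g^{\sigma,0}=Z_{\mf g}(\sigma)$, $K_{\sigma,0}=K_\sigma$, and applies \cite[Proposition 2.10]{SolSGHA}, which is precisely the $\mb r$-free analogue of the isomorphism restricting a cohomology computation on $Z_{\mf g}(\sigma)$ to its nilpotent cone.

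The main obstacle I anticipate is making the step ``forget $\C^\times$-equivariance, then set $\mb r=0$'' rigorous at the level of the full $\End^*$-algebra rather than just the underlying graded vector space — one must check that the algebra structure (the convolution product in equivariant homology of \cite[\S 1.20]{Lus-Cusp2}) is compatible with the forgetful functor and with specializing $\mb r$. I expect this to follow from naturality of the convolution product in the equivariant group and from the fact that, by Proposition \ref{prop:2.3}.c and Proposition \ref{prop:2.4}.c, all the algebras in sight are concentrated in even degrees and are finite over their (commutative, Noetherian) images of $H^*_{?}(\pt)$, so that the completed tensor products behave well and no higher Tor-terms obstruct the comparison. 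Once that compatibility is in hand, the chain of isomorphisms in \eqref{eq:8.3} with $r=0$ combines with the two simplifications above to give the three displayed isomorphisms, and taking the associated categories of continuous finite-length modules would yield the corresponding statement for $\mh H(G,M,q\cE,0)$.
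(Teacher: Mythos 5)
Your overall skeleton matches the paper's: start from \eqref{eq:8.3} at $r=0$, simplify the completed coefficient ring via $\hat{H}^*_{Z_G(\sigma)\times\C^\times}(\pt)_{\sigma,0}/(\mb r)\cong \hat{H}^*_{Z_G(\sigma)}(\pt)_\sigma$, and treat the nilpotent-cone version by the same mechanism (the paper runs Theorem \ref{thm:2.5}.c instead of \ref{thm:2.5}.a; your route through Proposition \ref{prop:2.4}.b and \cite[Proposition 2.10]{SolSGHA} is the same content). The divergence, and the gap, is at the step you yourself flag as the main obstacle: the claim that forgetting the $\C^\times$-equivariance changes the $\End^*$-algebra only by the polynomial variable $\mb r$, so that setting $\mb r=0$ literally recovers $\End^*_{\mc D^b_{Z_G(\sigma)}(\mf g^{\sigma,0})}(K_{\sigma,0})$. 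That assertion is precisely what has to be proved, and the inputs you offer do not establish it. The comparison between $\mc D^b_{Z_G(\sigma)\times\C^\times}$ and $\mc D^b_{Z_G(\sigma)}$ is governed by a Gysin-type long exact sequence (or spectral sequence) for the $\C^\times$-quotient, in which multiplication by $\mb r$ appears; to conclude that the equivariant Ext-algebra is $\C[\mb r]$-free with quotient the non-$\C^\times$-equivariant one, you need either vanishing of odd $\End^*_{\mc D^b_{Z_G(\sigma)}}(K_{\sigma,0})$ or $\mb r$-torsion-freeness of $\End^*_{\mc D^b_{Z_G(\sigma)\times\C^\times}}(K_{\sigma,0})$. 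Propositions \ref{prop:2.3}.c and \ref{prop:2.4}.c, which you invoke, only give evenness of the $Z_G(\sigma)\times\C^\times$-equivariant algebras, and evenness of the abutment does not force degeneration of the comparison; also a Künneth statement $H^*_{H\times\C^\times}(X)\cong H^*_H(X)\otimes\C[\mb r]$ for the underlying spaces does not transfer automatically to Ext-algebras of the semisimple complex $K_{\sigma,0}$. The paper avoids all of this by citing \cite[\S 4.11]{Lus-Cusp2} for exactly the completed isomorphism \eqref{eq:8.5}; that reference (together with Appendix \ref{sec:B}) carries the real weight of the proof.

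A second omission: \cite[\S 4.11]{Lus-Cusp2} and the localization results of Appendix \ref{sec:B} are stated for connected groups, while $Z_G(\sigma)$ is in general disconnected. The paper therefore proves \eqref{eq:8.5} for $Z_G^\circ(\sigma)$ and then takes $\pi_0(Z_G(\sigma))$-invariants, as in the proof of Proposition \ref{prop:2.3}; your argument never addresses this passage from $Z_G^\circ(\sigma)$ to $Z_G(\sigma)$. So either you must supply the missing parity/freeness input that would make your direct forgetful-functor argument rigorous (uniformly in the disconnected case), or you should fall back on the connected-group localization result plus the invariants step, as the paper does.
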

\begin{proof}
The final line of \eqref{eq:8.3} simplifies because
\begin{equation}\label{eq:8.4}
\hat{H}^*_{Z_G (\sigma) \times \C^\times} (\pt)_{\sigma,0} / (\mb r) \cong \hat{H}^*_{Z_G (\sigma)} 
(\pt)_\sigma \otimes \hat{H}^*_{\C^\times} (\pt)_0 / (\mb r) \cong \hat{H}^*_{Z_G (\sigma)} (\pt)_{\sigma} 
\end{equation}
as $H^*_{Z_G (\sigma) \times \C^\times}(\pt)$-modules. Further, by \cite[\S 4.11]{Lus-Cusp2} there is a 
natural isomorphism 
\begin{multline}\label{eq:8.5}
\hat{H}^*_{Z_G^\circ (\sigma)} (\pt)_{\sigma} \underset{H^*_{Z_G^\circ (\sigma) \times \C^\times}(\pt)}{\otimes}
\End^*_{\mc D^b_{Z_G^\circ (\sigma) \times \C^\times} (\mf g^{\sigma,0})} (K_{\sigma,0}) \cong \\
\hat{H}^*_{Z_G^\circ (\sigma)} (\pt)_{\sigma} \underset{H^*_{Z_G^\circ (\sigma)}(\pt)}{\otimes}
\End^*_{\mc D^b_{Z_G^\circ (\sigma)} (\mf g^{\sigma,0})} (K_{\sigma,0}) .
\end{multline}
Taking $\pi_0 (Z_G (\sigma))$-invariants, as in the proof of Proposition \ref{prop:2.3}, we obtain
the analogue of \eqref{eq:8.5} with $Z_G (\sigma)$ instead of $Z_G^\circ (\sigma)$. Combining that with
\eqref{eq:8.3} and \eqref{eq:8.4} proves the first isomorphism.

The isomorphism between the first and third terms in the statement can be shown in the same way, 
starting from part (c) instead of part (a) of Theorem \ref{thm:2.5}. 
\end{proof}

Next we consider a nonzero $r$ and we fix $\sigma \in \mf t_r$. Notice that $\mf t_r \subset \mf m$,
so $\sigma$ commutes with $\mf t$ and with $T = \exp (\mf t)$. 
Let $T'$ be a maximal torus of $Z_G^\circ (\sigma)$ containing $T$.

\begin{lem}\label{lem:8.2}
There is a natural algebra isomorphism
\begin{multline*}
\hat{H}^*_{Z_G (\sigma) \times \C^\times} (\pt)_{\sigma,r} \underset{H^*_{Z_G (\sigma) \times \C^\times}
(\pt)}{\otimes} \End^*_{\mc D^b_{Z_G (\sigma) \times \C^\times} (\mf g^{\sigma,r})} (K_{\sigma,r}) \; \cong \\
\C [[\mb r -r]] \otimes_\C \hat{H}^*_{Z_G (\sigma)} (\pt)_{\sigma} \underset{H^*_{Z_G (\sigma)}(\pt)}{\otimes}
\End^*_{\mc D^b_{Z_G (\sigma)} (\mf g^{\sigma,r})} (K_{\sigma,r}) .
\end{multline*}
\end{lem}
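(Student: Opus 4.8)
The strategy is to reduce the claimed isomorphism to a statement about equivariant cohomology of a point, where the difference between $Z_G(\sigma) \times \C^\times$-equivariance and $Z_G(\sigma)$-equivariance (after completion at $(\sigma,r)$ with $r \neq 0$) contributes exactly a free factor $\C[[\mb r - r]]$, and then propagate that through the $\End^*$-algebras by tensoring. The key observation is that, since $r \neq 0$, the element $(\sigma,r)$ generates a one-dimensional torus $T_{\sigma,r} \subset Z_G^\circ(\sigma) \times \C^\times$ whose projection to the $\C^\times$-factor is an isomorphism; consequently the composite $H^*_{\C^\times}(\pt) \to H^*_{Z_G(\sigma) \times \C^\times}(\pt) \to H^*_{T_{\sigma,r}}(\pt)$, completed at $(\sigma,r)$, identifies $\hat{H}^*_{\C^\times}(\pt)_r$ with the completion of a direct summand. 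This gives a decomposition, as $H^*_{Z_G(\sigma) \times \C^\times}(\pt)$-modules,
\[
\hat{H}^*_{Z_G(\sigma) \times \C^\times}(\pt)_{\sigma,r} \; \cong \; \C[[\mb r - r]] \otimes_\C \hat{H}^*_{Z_G(\sigma)}(\pt)_{\sigma},
\]
where on the right $\mb r$ acts through the first factor and $H^*_{Z_G(\sigma)}(\pt)$ through the second. This is the analogue of \eqref{eq:8.4} for $r \neq 0$: the splitting $\mr{Lie}(Z_G(\sigma) \times \C^\times) = \mr{Lie}(Z_G(\sigma)) \oplus \mr{Lie}(\C^\times)$ is not $(\sigma,r)$-adapted when $\sigma \neq 0$, but after completion one may change coordinates so that the first summand becomes the tangent directions transverse to $T_{\sigma,r}$ and the second becomes the $T_{\sigma,r}$-direction, which is exactly $\C[[\mb r - r]]$.

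First I would make this cohomology-of-a-point statement precise, for the connected group $Z_G^\circ(\sigma)$, using \cite[\S 4.11]{Lus-Cusp2} (which is available by Appendix \ref{sec:B}) in the same spirit that Lemma \ref{lem:8.1} uses it for $r = 0$; the maximal torus $T'$ of $Z_G^\circ(\sigma)$ introduced just before the statement is what lets one write $H^*_{Z_G^\circ(\sigma)}(\pt) = \mc O(\mf t')^{W(Z_G^\circ(\sigma),T')}$ and thereby identify the relevant completions concretely. Second, I would tensor this identity over $H^*_{Z_G^\circ(\sigma) \times \C^\times}(\pt)$ with $\End^*_{\mc D^b_{Z_G^\circ(\sigma) \times \C^\times}(\mf g^{\sigma,r})}(K_{\sigma,r})$ and use that this $\End^*$-algebra, by Proposition \ref{prop:2.4}.c, is already a $\C^\times$-equivariant object whose underlying $Z_G^\circ(\sigma)$-equivariant $\End^*$-algebra differs from it only through the $H^*_{\C^\times}(\pt)$-action — more precisely, forgetting the $\C^\times$-structure on $\mf g^{\sigma,r}$ and $K_{\sigma,r}$ replaces $H^*_{Z_G^\circ(\sigma) \times \C^\times}(\pt)$ by $H^*_{Z_G^\circ(\sigma)}(\pt)$ in the homology action, and after completion the $\C[[\mb r - r]]$-direction factors out cleanly because it no longer meets the support of $K_{\sigma,r}$ (the variety $\mf g^{\sigma,r}$ being $T_{\sigma,r}$-fixed). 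Third, I would pass from $Z_G^\circ(\sigma)$ to $Z_G(\sigma)$ by taking $\pi_0(Z_G(\sigma))$-invariants on both sides, exactly as in the proofs of Proposition \ref{prop:2.3}.b and Lemma \ref{lem:8.1}; the $\C[[\mb r - r]]$-factor is $\pi_0(Z_G(\sigma))$-trivial so it simply passes through the invariants.

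The main obstacle I anticipate is the second step: justifying that ``forgetting the $\C^\times$-equivariant structure'' interacts correctly with the formal completion and with the convolution product defining the $\End^*$-algebra structure. One has to check that the isomorphism of $H^*_{\C^\times}(\pt)$-modules lifting $\hat{H}^*_{Z_G(\sigma) \times \C^\times}(\pt)_{\sigma,r} \cong \C[[\mb r - r]] \otimes \hat{H}^*_{Z_G(\sigma)}(\pt)_\sigma$ is compatible with the module structure over the $\End^*$-algebra coming from equivariant homology — i.e. that the $\C[[\mb r-r]]$-factor really is central and splits off as an algebra factor, not merely as a module factor. Here I would lean on the fact that $H^*_{\C^\times}(\pt) \to H^*_{Z_G(\sigma) \times \C^\times}(\pt)$ lands in the centre and on Proposition \ref{prop:2.4}.c guaranteeing that all the algebras involved are Noetherian and concentrated in even degrees, so that completed tensor products behave well and no higher $\mr{Tor}$-terms obstruct the splitting. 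A secondary point to handle carefully is that $T_{\sigma,r}$ is a torus in $Z_G^\circ(\sigma) \times \C^\times$ rather than in the $\C^\times$-factor alone, so the ``$\mb r$-direction'' after completion is a twisted copy of $H^*_{\C^\times}(\pt)$; but since we only claim an isomorphism after completion at $(\sigma,r)$, the change of variables $\mb r \mapsto \mb r - r$ together with the corresponding shift in $\mf t'$ makes this harmless, and the resulting factor is genuinely $\C[[\mb r - r]]$ as stated.
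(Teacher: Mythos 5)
Your step (1) (the splitting of the completed point cohomology) and your final step (taking $\pi_0(Z_G(\sigma))$-invariants) match the paper, but the middle of your argument has a genuine gap: you never actually prove the relation between the two different Ext-algebras $\End^*_{\mc D^b_{Z_G(\sigma)\times\C^\times}(\mf g^{\sigma,r})}(K_{\sigma,r})$ and $\End^*_{\mc D^b_{Z_G(\sigma)}(\mf g^{\sigma,r})}(K_{\sigma,r})$, you assert it. The claim that the $\C^\times$-equivariant algebra ``differs from the $Z_G^\circ(\sigma)$-equivariant one only through the $H^*_{\C^\times}(\pt)$-action'' is exactly the content of the lemma, and it is false before completion (compare the remark in Paragraph \ref{par:fixed}: without the $\C^\times$-action one gets $\mh H(G,M,q\cE)/(\mb r)$, a quotient, not a free $\C[\mb r]$-factor), so it cannot be obtained by ``forgetting the equivariant structure'' plus evenness and Noetherianity; those facts do not control the forgetful map between the two equivariant derived categories, and ``no higher Tor'' is not the issue. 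You do isolate the correct geometric input --- $T_{\sigma,r}=\exp(\C(\sigma,r))$ acts trivially on $\mf g^{\sigma,r}$ because $r\neq 0$ --- but you are missing the mechanism that converts this into a K\"unneth-type splitting of the Ext-algebra: for that one needs the trivially acting subtorus to be a \emph{direct factor} of the acting group, which generally fails inside $Z_G^\circ(\sigma)\times\C^\times$ (a central torus need not split off, only up to isogeny).

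This is why the paper's proof first invokes the localization theorem (\cite[\S 4.11]{Lus-Cusp2} together with Proposition \ref{prop:B.1}) applied to the full completed tensor products, to replace $Z_G^\circ(\sigma)\times\C^\times$-equivariance by $T'\times\C^\times$-equivariance for a maximal torus $T'$ (equation \eqref{eq:8.6}); inside the torus $T'\times\C^\times$ the pointwise stabilizer $Z'$ of $\mf g^{\sigma,r}$ contains $\exp(\C(\sigma,r))$, hence surjects onto $\C^\times$, and admits a genuine complement $Z^\perp\subset T'$, so that $\End^*_{\mc D^b_{T'\times\C^\times}}(K_{\sigma,r})\cong H^*_{Z'}(\pt)\otimes\End^*_{\mc D^b_{Z^\perp}}(K_{\sigma,r})\cong\C[\mb r]\otimes\End^*_{\mc D^b_{T'}}(K_{\sigma,r})$; one then descends back to $Z_G^\circ(\sigma)$ by the same localization theorem without the $\C^\times$-factor and finally takes $\pi_0(Z_G(\sigma))$-invariants. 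In your write-up the maximal torus only appears to describe $H^*_{Z_G^\circ(\sigma)}(\pt)$ concretely; to close the gap you would have to use it (or some substitute, e.g.\ a carefully justified degeneration argument for the $B\C^\times$-fibration exploiting the trivial $T_{\sigma,r}$-action) at the level of the $\End^*$-algebras themselves.
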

\begin{proof}
From \cite[\S 4.11]{Lus-Cusp2} and Proposition \ref{prop:B.1} we get
\begin{multline}\label{eq:8.6}
\hat{H}^*_{Z_G^\circ (\sigma) \times \C^\times} (\pt)_{\sigma,r} \underset{H^*_{Z_G^\circ (\sigma) \times 
\C^\times}(\pt)}{\otimes} \End^*_{\mc D^b_{Z_G^\circ (\sigma) \times \C^\times} (\mf g^{\sigma,r})} 
(K_{\sigma,r}) \; \cong \\
\hat{H}^*_{T' \times \C^\times} (\pt)_{\sigma,r} \underset{H^*_{T' \times \C^\times}(\pt)}{\otimes} 
\End^*_{\mc D^b_{T' \times \C^\times} (\mf g^{\sigma,r})} (K_{\sigma,r}) .
\end{multline}
The subgroup $\exp (\C (\sigma,r)) \subset T' \times \C^\times$ fixes $\mf g^{\sigma,r}$ pointwise
and projects onto $\C^\times$ because $r \neq 0$. Hence 
\begin{equation}\label{eq:8.8}
T' \times \C^\times = T' \times \exp (\C (\sigma,r)) \quad\text{in}\quad G^\circ \times \C^\times .
\end{equation}
By connectedness $\exp (\C (\sigma,r))$ also acts trivially on $K_{\sigma,r}$,
so we can further decompose according to \eqref{eq:8.8}:
\begin{equation}\label{eq:8.9}
\begin{aligned}
\End^*_{\mc D^b_{T' \times \C^\times}(\mf g^{\sigma,r})} (K_{\sigma,r})  
& \cong \End^*_{\mc D^b_{\exp (\C (\sigma,r))}(\pt)} (\C) \otimes_\C 
\End^*_{\mc D^b_{T'}(\mf g^{\sigma,r})} (K_{\sigma,r}) \\
& \cong H^*_{\exp (\C (\sigma,r))} (\pt) \otimes_\C \End^*_{\mc D^b_{T'}(\mf g^{\sigma,r})} (K_{\sigma,r}) \\ 
& \cong \C [\mb r - r] \otimes_\C \End^*_{\mc D^b_{T'}(\mf g^{\sigma,r})} (K_{\sigma,r}) .
\end{aligned}
\end{equation}
Then \eqref{eq:8.6} and its analogue without $\C^\times$ yield 
\begin{multline}\label{eq:8.11}
\begin{aligned}
& \hat{H}^*_{Z_G^\circ (\sigma) \times \C^\times} (\pt)_{\sigma,r} \underset{H^*_{Z_G^\circ (\sigma) \times 
\C^\times}(\pt)}{\otimes} \End^*_{\mc D^b_{Z_G^\circ (\sigma) \times \C^\times} (\mf g^{\sigma,r})} 
(K_{\sigma,r}) \; \cong \\
& \C [[\mb r - r]] \otimes \hat{H}^*_{T'} (\pt)_{\sigma} \underset{H^*_{T'} (\pt)}{\otimes} 
\End^*_{\mc D^b_{T'}(\mf g^{\sigma,r})}(K_{\sigma,r}) \; \cong \\
& \C [[\mb r - r]] \otimes \hat{H}^*_{Z_G^\circ (\sigma)} (\pt)_{\sigma} \underset{H^*_{Z_G^\circ (\sigma)}(\pt)}
{\otimes} \End^*_{\mc D^b_{Z_G^\circ (\sigma)} (\mf g^{\sigma,r})} (K_{\sigma,r}) .
\end{aligned}
\end{multline}
As in the proof of Proposition \ref{prop:2.3}.b, taking $\pi_0 (Z_G (\sigma))$-invariants in 
\eqref{eq:8.11} replaces $Z_G^\circ (\sigma)$ by $Z_G (\sigma)$.
\end{proof}

Now we can prove our desired variation on Theorem \ref{thm:2.5}. 

\begin{thm}\label{thm:8.3}
Let $r \in \C$ and $\sigma \in \mf t_r$. 
\enuma{
\item There exists a natural algebra isomorphism
\begin{multline*}
\hat{Z} (\mh H (G,M,q\cE,r))_\sigma \underset{Z (\mh H (G,M,q\cE,r))}{\otimes} \mh H (G,M,q\cE,r) 
\; \cong \\
\hat{H}^*_{Z_G (\sigma)} (\pt)_{\sigma} \underset{H^*_{Z_G (\sigma)}(\pt)}{\otimes} 
\End^*_{\mc D^b_{Z_G (\sigma)} (\mf g^{\sigma,r})} (K_{\sigma,r}) .
\end{multline*}
\item This induces an equivalence of categories
\[
\Modf{\sigma} \big( \mh H (G,M,q\cE,r) \big) \cong \Modf{\sigma} \big( 
\End^*_{\mc D^b_{Z_G (\sigma)} (\mf g^{\sigma,r})} (K_{\sigma,r}) \big) .
\]
\item Parts (a) and (b) also hold with $(\mf g_N^{\sigma,r}, K_{N,\sigma,r})$ instead of
$(\mf g^{\sigma,r},K_{\sigma,r})$.
}
\end{thm}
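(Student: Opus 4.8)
\textbf{Proof proposal for Theorem \ref{thm:8.3}.}

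The plan is to deduce part (a) directly from the chain of isomorphisms \eqref{eq:8.3} combined with Lemma \ref{lem:8.1} and Lemma \ref{lem:8.2}, treating the cases $r = 0$ and $r \neq 0$ separately. When $r = 0$, the statement of part (a) is exactly Lemma \ref{lem:8.1}, so there is nothing more to do in that case. When $r \neq 0$, I would start from the last line of \eqref{eq:8.3} and apply Lemma \ref{lem:8.2} to rewrite
\[
\hat{H}^*_{Z_G (\sigma) \times \C^\times} (\pt)_{\sigma,r} \underset{H^*_{Z_G (\sigma) \times \C^\times}(\pt)}{\otimes} \End^*_{\mc D^b_{Z_G (\sigma) \times \C^\times} (\mf g^{\sigma,r})} (K_{\sigma,r})
\]
as $\C[[\mb r - r]] \otimes_\C \big( \hat{H}^*_{Z_G (\sigma)} (\pt)_{\sigma} \otimes_{H^*_{Z_G (\sigma)}(\pt)} \End^*_{\mc D^b_{Z_G (\sigma)} (\mf g^{\sigma,r})} (K_{\sigma,r}) \big)$. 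Dividing out the ideal $(\mb r - r)$, as \eqref{eq:8.3} requires, kills the $\C[[\mb r - r]]$ tensor factor and leaves exactly the right-hand side claimed in part (a). The only point that needs a word of justification is that the isomorphism of Lemma \ref{lem:8.2} is compatible with the $\C[\mb r]$-module structures on both sides in the way that makes "divide out $(\mb r - r)$'' commute with it; this is visible from the construction in the proof of Lemma \ref{lem:8.2}, where the factor $\C[\mb r]$ produced in \eqref{eq:8.10} is precisely the image of $\C[\mb r] \subset H^*_{\C^\times}(\pt)$.

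For part (b), the equivalence of categories follows formally from part (a) in the same way Theorem \ref{thm:2.5}.b follows from \ref{thm:2.5}.a: the category $\Modf{\sigma}(\mh H (G,M,q\cE,r))$ is identified with the category of finite length, adically continuous modules over the completed algebra $\hat{Z}(\mh H(G,M,q\cE,r))_\sigma \otimes_{Z(\mh H(G,M,q\cE,r))} \mh H(G,M,q\cE,r)$, and likewise on the geometric side, using that $\End^*_{\mc D^b_{Z_G(\sigma)}(\mf g^{\sigma,r})}(K_{\sigma,r})$ is Noetherian with centre containing (the image of) $H^*_{Z_G(\sigma)}(\pt)$, so that the relevant category of finite length modules with the prescribed central character is recovered from the formal completion. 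An isomorphism of completed algebras transports these module categories into one another.

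For part (c), the argument is word-for-word the same, replacing $(\mf g^{\sigma,r}, K_{\sigma,r})$ by $(\mf g_N^{\sigma,r}, K_{N,\sigma,r})$ throughout: Lemma \ref{lem:8.1} already includes the $N$-version, Lemma \ref{lem:8.2} goes through verbatim with the subscript $N$ inserted (the proof only used that $K_{\sigma,r}$ is a $Z'$-equivariantly trivial — because $Z'$ is connected — complex on a $Z'$-fixed variety, and the same holds for $K_{N,\sigma,r}$ on $\mf g_N^{\sigma,r}$), and \eqref{eq:8.3} has its counterpart from Theorem \ref{thm:2.5}.c. I expect the main obstacle to be purely bookkeeping: making sure the tensor-factor decomposition used to strip off $\C[[\mb r-r]]$ is genuinely natural and compatible with all the $H^*_{?}(\pt)$-actions appearing in \eqref{eq:8.3}, so that the four isomorphisms of \eqref{eq:8.3} and the two lemmas can be spliced together without any sign or grading mismatch; there is no serious mathematical difficulty beyond what Lemmas \ref{lem:8.1} and \ref{lem:8.2} already establish.
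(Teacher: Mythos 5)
Your proposal is correct and follows essentially the same route as the paper: $r=0$ is Lemma \ref{lem:8.1}, $r\neq 0$ combines \eqref{eq:8.3} with Lemma \ref{lem:8.2} and divides out $(\mb r - r)$, and (b) is formal from (a). The only difference is in part (c), where you rerun Lemma \ref{lem:8.2} with subscripts $N$; the paper instead just observes that for $r \neq 0$ one has $\mf g_N^{\sigma,r} = \mf g^{\sigma,r}$ and $K_{N,\sigma,r} = K_{\sigma,r}$, so the $N$-version requires no new argument — your extra verification is harmless but unnecessary.
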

\begin{proof}
(b) follows directly from (a).\\
(a,c) For $r=0$ this is Lemma \ref{lem:8.1}, so we may assume $r \neq 0$. Then the subscripts $N$
do not change anything. By \eqref{eq:8.3} and Lemma \ref{lem:8.2} 
\begin{align*}
& \hat{Z} (\mh H (G,M,q\cE,r))_\sigma \underset{Z (\mh H (G,M,q\cE,r))}{\otimes} 
\mh H (G,M,q\cE,r) \; \cong \\
& \big( \C [[\mb r -r]] \otimes_\C \hat{H}^*_{Z_G (\sigma)} (\pt)_{\sigma} \big) / (\mb r - r) 
\underset{H^*_{Z_G (\sigma)}(\pt)}{\otimes} \End^*_{\mc D^b_{Z_G (\sigma)} (\mf g^{\sigma,r})} 
(K_{\sigma,r}) \; \cong \\
& \hat{H}^*_{Z_G (\sigma)} (\pt)_{\sigma} \underset{H^*_{Z_G (\sigma)}(\pt)}{\otimes} 
\End^*_{\mc D^b_{Z_G (\sigma)} (\mf g^{\sigma,r})} (K_{\sigma,r}) . \qedhere
\end{align*}
\end{proof}

\section{Standard modules of twisted graded Hecke algebras}
\label{sec:standard}

In \cite{Lus-Cusp1,AMS2} standard (left) modules for $\mh H (G,M,q\cE)$ were studied.
We will quickly recall their construction and then we relate these standard modules to the 
previous section. Recall that Condition \ref{cond:2.1} is in force. In this section the
data $(G,M,q\cE)$ are fixed, and we often abbreviate $\mh H = \mh H (G,M,q\cE)$.

Let $y \in \mf g$ be nilpotent and define 
\[
\mc P_y = \{ gP \in G/P : \Ad (g^{-1}) y \in \cC_v^M + \mf u \} .
\]
The group
\begin{equation}\label{eq:4.1}
Z_{G \times \C^\times} (y) = \{ (g_1,\lambda) \in G \times \C^\times : \Ad (g_1) y = \lambda^2 y \}
\end{equation}
acts on $\mc P_y$ by $(g_1,\lambda) \cdot gP = g_1 g P$. This puts $\mc P_y$ in 
$Z_{G \times \C^\times} (y)$-equivariant bijection with $\{y\} \times \mc P_y \subset \dot{\mf g}$.
The local system $\dot{q\cE}$ on $\dot{\mf g}$ restricts to a local system on $\{y\} \times \mc P_y 
\cong \mc P_y$, still called $\dot{q\cE}$. 
The action of $\C [W_{q\cE} ,\natural_{q\cE}]$ on $K$ from Theorem \ref{thm:1.2} induces an action on
\begin{equation}\label{eq:4.6}
H_*^{G \times \C^\times} (\mf g,K) \cong H_*^{G \times \C^\times}(\dot{\mf g}, \dot{q\cE})
\end{equation}
From \cite[(2.4)]{SolSGHA} and the product in equivariant cohomology, we get an action of 
$\mc O (\mf t \oplus \C)$ on \eqref{eq:4.6}. These can be pulled back to actions on 
\[
H_*^\Zy (\mc P_y,\dot{q\cE}) ,
\]
making that vector space into a graded left module over $\mh H (G,M,q\cE)$ and over $H^*_\Zy (\pt)$.
Further, $Z_{G \times \C^\times}(y)$ acts naturally on $H_*^\Zy (\mc P_y,\dot{q\cE})$ and on 
$H^*_\Zy (\pt)$, and those actions factor through the component group $\pi_0 (Z_{G \times \C^\times}(y))$.

\begin{thm} \textup{(see \cite[Theorem 8.13]{Lus-Cusp1} and \cite[Theorem 3.2 and \S 4]{AMS2})} 
\label{thm:4.1}
\enuma{
\item The actions of $\mh H$ and $H^*_\Zy (\pt)$ on $H_*^\Zy (\mc P_y,\dot{q\cE})$ commute.
\item As $H^*_\Zy (\pt)$-module, $H_*^\Zy (\mc P_y,\dot{q\cE})$ is finitely generated and free.
\item The action of $\pi_0 (Z_{G \times \C^\times}(y))$ on $H_*^\Zy (\mc P_y,\dot{q\cE})$ commutes
with the action of $\mh H$ and is semilinear with respect to $H^*_\Zy (y)$.
}
\end{thm}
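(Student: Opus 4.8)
The theorem for the connected groups $G^\circ,M^\circ$ is \cite[Theorem 8.13]{Lus-Cusp1}, so the plan is to reduce to that statement, handling the component group of $Z_{G\times\C^\times}(y)$ by an invariants argument along the lines of \cite[\S 3--4]{AMS2}. I would first recall the geometric mechanism behind the connected case and then explain the passage to disconnected $G$.

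For connected $G$ the essential input is an affine paving of $\mc P_y$. Complete $y$ to an $\mathfrak{sl}_2$-triple adapted to $(M^\circ,\cC_v^M)$ and let $\gamma:\C^\times\to G^\circ$ be the associated one-parameter subgroup, so that $\Ad(\gamma(\lambda))y=\lambda^2 y$; by \eqref{eq:4.1} the element $(\gamma(\lambda),\lambda)$ then lies in $Z_{G\times\C^\times}(y)$, and its image $S$ is a subtorus of $\Zy$ acting on $\mc P_y$ through left translation on $G/P$. By the Bialynicki--Birula decomposition attached to $\gamma$, together with the results of \cite{Lus-Cusp1}, $\mc P_y$ admits an $S$-stable affine paving, and $\dot{q\cE}$ is constant on each cell; hence $H_*(\mc P_y,\dot{q\cE})$ is free and concentrated in degrees of a single parity. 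Consequently the Leray spectral sequence computing $H_*^S(\mc P_y,\dot{q\cE})$ from $H^*_S(\pt)$ and $H_*(\mc P_y,\dot{q\cE})$ degenerates, so $H_*^S(\mc P_y,\dot{q\cE})$ is free over $H^*_S(\pt)$; the standard reduction to a maximal torus of $\Zy$ (cf.\ \cite{Lus-Cusp1}) then yields finite generation and freeness over $H^*_\Zy(\pt)$, which is part (b). For part (a), the $\mc O(\mf t\oplus\C)$-part of $\mh H(G^\circ,M^\circ,q\cE)$ acts on $H_*^\Zy(\mc P_y,\dot{q\cE})$ by cap product with the cohomology classes supplied by \eqref{eq:1.1} and by $\C[\mb r]=H^*_{\C^\times}(\pt)$, so it commutes with the cap-product action of $H^*_\Zy(\pt)$ by commutativity of cup products; the $\C[W_{q\cE},\natural_{q\cE}]$-part acts through operators induced from $G^\circ\times\C^\times$-equivariant data on $\dot{\mf g}$, hence by $H^*_{G^\circ\times\C^\times}(\pt)$-linear maps which, after restriction to the fibre over $y$, are $H^*_\Zy(\pt)$-linear; this is standard (see \cite[\S 1.20]{Lus-Cusp2}). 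Part (c) is vacuous in the connected case, since then $Z_{G\times\C^\times}(y)=\Zy$.

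To pass to disconnected $G$ I would follow \cite[Theorem 3.2 and \S 4]{AMS2}. The full group $Z_{G\times\C^\times}(y)$ acts on $\mc P_y$ and on $\dot{q\cE}$, so by functoriality of equivariant homology $\pi_0(Z_{G\times\C^\times}(y))$ acts on $H_*^\Zy(\mc P_y,\dot{q\cE})$; this action is automatically semilinear over $H^*_\Zy(\pt)=H^*_\Zy(y)$ with respect to the conjugation action of $\pi_0(Z_{G\times\C^\times}(y))$ on that ring, because changing the equivariant structure transports a homology class and its scalars simultaneously, and this is part (c). The same functoriality shows that the $\pi_0$-action commutes with the $\mh H(G,M,q\cE)$-action: every operator defining the latter is built from ingredients --- the sheaf $\dot{q\cE}$, the correspondences realizing $W_{q\cE}$, and the classes on $\dot{\mf g}$ from \eqref{eq:1.1} --- that are already defined $G\times\C^\times$-equivariantly, hence are visibly compatible with the $Z_{G\times\C^\times}(y)$-equivariant structure. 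Finally, under Condition \ref{cond:2.1} the variety $\mc P_y$ contains no spurious components, and $H_*^\Zy(\mc P_y,\dot{q\cE})$ is obtained from its $G^\circ$-analogue by base change to $H^*_\Zy(\pt)$ followed by passage to $\pi_0$-invariants, so the finite generation and freeness of part (b) descend from the connected case.

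The hard part will not be the representation-theoretic bookkeeping but controlling the geometry of $\mc P_y$ under the disconnected group: one must verify that the affine paving and the comparison between the $G^\circ$- and $G$-versions of $H_*^\Zy(\mc P_y,\dot{q\cE})$ behave as stated, and that the resulting $\pi_0$-semilinear structure is exactly the intended one. This is precisely what \cite[\S 4]{AMS2} takes care of, and it is the reason Condition \ref{cond:2.1} is imposed.
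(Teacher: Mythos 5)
Your overall skeleton (connected case from \cite{Lus-Cusp1}, disconnected case from \cite{AMS2}) matches the paper, whose proof is in fact almost purely citational: everything in (a), (c) and the finite generation in (b) is taken from \cite[Theorem 8.13]{Lus-Cusp1} and \cite[Theorem 3.2 and \S 4]{AMS2}, and the only new content is the remark that the argument of \cite[Proposition 7.2]{Lus-Cusp1}, stated there as ``finitely generated and projective'', actually yields freeness. Your reconstruction has a genuine gap at exactly the freeness step: you derive it from an $S$-stable affine paving of $\mc P_y$ obtained ``by the Bialynicki--Birula decomposition''. But $\mc P_y$ is in general a singular (and, as defined by the condition $\Ad (g^{-1}) y \in \cC_v^M + \mf u$ with $\cC_v^M$ a non-closed orbit, a priori only locally closed) subvariety of $G/P$ -- already the classical Springer fibres are singular -- so Bialynicki--Birula does not produce affine cells here, and no such paving adapted to $\dot{q\cE}$ is established or used in \cite{Lus-Cusp1}. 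Lusztig's actual route is the parity/purity of the stalks of $K_N$ (generalized Green functions), which forces degeneration of the spectral sequence for $H_*^{\Zy}(\mc P_y,\dot{q\cE})$ over $H^*_{\Zy}(\pt)$ itself; note also that freeness over $H^*_S (\pt)$ for your one-dimensional torus $S$ would not by itself give freeness over $H^*_{\Zy}(\pt)$, so the ``reduction to a maximal torus'' is running in the wrong direction. As it stands, the key step of (b) rests on an unsupported assertion, and you do not address the one point the paper actually adds, namely that the cited proof of projectivity already gives freeness.

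Two further inaccuracies. First, part (c) is not vacuous for connected $G$: the centralizer $Z_{G \times \C^\times}(y)$ of a nilpotent element is typically disconnected even then (its component group is precisely what supplies the enhancements $\rho$), and the $\pi_0$-statements are already part of \cite[Theorem 8.13]{Lus-Cusp1}; your later functoriality argument does cover the mechanism, but the announced reduction is misstated. Second, the passage from $G^\circ$ to $G$ is not ``base change followed by passage to $\pi_0$-invariants'': by \cite{AMS2} (compare \eqref{eq:A.9} and \eqref{eq:A.11} in this paper) the $G$-version of $H_*^{\Zy}(\mc P_y,\dot{q\cE})$ is obtained from the connected-group version by induction along $\mh H (G^\circ, L,\cE) \subset \mh H (G,M,q\cE)$, the variety $\mc P_y$ breaking into translates of its $G^\circ$-analogue; taking $\Hom$ against $\rho$ (invariants) only enters later, when forming $E_{y,\sigma,r,\rho}$. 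Freeness does survive this induction, since finite direct sums of free modules are free, so the conclusion is unaffected, but the mechanism you describe is not the one in \cite{AMS2}.
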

\begin{proof}
Comparing with the references, it only remains to see that in part (b) the module is free. 
This part ultimately relies on \cite[Proposition 7.2]{Lus-Cusp1}, where it is proven that the module
is finitely generated and projective. However, that argument actually shows that the module is free.
\end{proof}
 
Recall from \cite[\S 1.11]{Lus-Cusp1} that 
\begin{multline}\label{eq:4.2}
H^*_\Zy (\pt) \text{ is the ring of invariant polynomials on}\\
\text{the maximal reductive quotient of } \mr{Lie} (\Zy) \text{, with doubled degrees.}
\end{multline} 
The characters of $H^*_\Zy (\pt)$ are parametrized by the
semisimple adjoint orbits in the reductive quotient Lie algebra. 

We let $\mf g \oplus \C$ act on $\mf g$ by 
\[
(\sigma,r) \cdot X = [\sigma,X] - 2rX ,
\]
that is the derivative of the $G \times \C^\times$-action. Then we can write
\begin{equation}\label{eq:4.3}
\mr{Lie} (Z_{G \times \C^\times}(y)) = \{ (\sigma,r) \in \mf g \oplus \C : [\sigma,y] = 2 ry \}
= Z_{\mf g \oplus \C}(y) .
\end{equation}
Thus every semisimple $(\sigma,r) \in Z_{\mf g \oplus \C}(y)$ defines a unique character of 
$H^*_\Zy (\pt)$, which we denote $\C_{\sigma,r}$. This gives us a family of $\mh H$-modules
\[
E_{y,\sigma,r} := \C_{\sigma,r} \underset{H^*_\Zy (\pt)}{\otimes} H_*^\Zy (\mc P_y, \dot{q\cE})
\qquad \text{for semisimple } (\sigma,r) \in Z_{\mf g \oplus \C}(y) .
\] 
It is known from \cite[Proposition 1.4]{AMS3} that (when Condition \ref{cond:2.1} holds) 
$E_{y,\sigma,r}$ admits the central character $((\Ad (G) \sigma - r \sigma_v) \cap \mf t,r)$. 
Via Lemma \ref{lem:2.2} this corresponds to $\Ad (G) (\sigma,r) \cap \Sigma_v (\mf t \oplus \C)$.
Let 
\[
C_y = Z_C (y) = Z_{G \times \C^\times} (y,\sigma,r)
\] 
be the intersection of $Z_{G \times \C^\times}(y)$ from \eqref{eq:4.1} and 
$C = Z_{G \times \C^\times}(\sigma,r)$ (with respect to the adjoint action). 
The component group $\pi_0 (C_y)$ acts naturally on $E_{y,\sigma,r}$ by $\mh H$-intertwiners. 
For $\rho \in \Irr (\pi_0 (C_y))$ we form the $\mh H$-module
\begin{equation}\label{eq:4.7}
E_{y,\sigma,r,\rho} = \Hom_{\pi_0 (C_y)}(\rho, E_{y,\sigma,r}) .
\end{equation}
Choose an algebraic homomorphism $\gamma_y : SL_2 (\C) \to G^\circ$ with 
d$\gamma_y \matje{0}{1}{0}{0} = y$. It is often convenient to involve the element
\begin{equation}\label{eq:4.4}
\sigma_0 := \sigma + \textup{d}\gamma_y \matje{-r}{0}{0}{r} \in Z_{\mf g}(y) .
\end{equation}
For instance, by \cite[Lemma 3.6.a]{AMS2} there are natural isomorphisms 
\begin{equation}\label{eq:4.5}
\pi_0 (C_y) \cong \pi_0 (Z_G (y,\sigma)) \cong \pi_0 (Z_G (y,\sigma_0))
\end{equation}
It was shown in \cite[Proposition 3.7 and \S 4]{AMS2} that $E_{y,\sigma,r\rho}$ is nonzero
if and only if the cuspidal quasi-support $q\Psi_{Z_G (\sigma_0)}(y,\rho)$, for the
group $Z_G (\sigma_0)$ and with $\rho$ considered as representation of $\pi_0 (Z_G (y,\sigma_0))$
via \eqref{eq:4.5}, is $G$-conjugate to $(M,\cC_v^M,q\cE)$. Equivalent conditions will be 
described in Proposition \ref{prop:2.6}. For such $\rho$ we call 
$E_{y,\sigma,r,\rho}$ a standard (geometric) $\mh H$-module.

\begin{thm} \label{thm:4.2} \textup{\cite[Theorem 1.6]{AMS3}} 
\enuma{
\item For $r \in \C^\times$, every standard $\mh H$-module $E_{y,\sigma,r,\rho}$ has a
unique irreducible quotient $M_{y,\sigma,r,\rho}$.
\item For $r = 0$, the standard module $E_{y,\sigma,0,\rho}$ has a distinguished 
irreducible quotient, called $M_{y,\sigma,0,\rho}$.
\item For any $r \in \C$, the correspondence $M_{y,\sigma,r,\rho} \longleftrightarrow 
(y,\sigma,\rho)$ provides a bijection between 
\[
\Irr_r (\mh H) = \Irr (\mh H / (\mb r - r))
\] 
and the $G$-association classes of triples $(y,\sigma,\rho)$ as in \eqref{eq:4.3}--\eqref{eq:4.7}.
}
\end{thm}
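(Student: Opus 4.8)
The statement has three parts and is attributed to \cite[Theorem 4.6]{AMS2}, so the plan is essentially to reduce everything to the known case treated there, together with the structural results already set up in the excerpt. First I would recall the construction of $E_{y,\sigma,r,\rho}$ as $\Hom_{\pi_0(C_y)}(\rho, E_{y,\sigma,r})$ with $E_{y,\sigma,r} = \C_{\sigma,r} \otimes_{H^*_\Zy(\pt)} H_*^\Zy(\mc P_y, \dot{q\cE})$, and observe that by Theorem \ref{thm:4.1} the $\mh H(G,M,q\cE)$-action, the $H^*_\Zy(\pt)$-action, and the $\pi_0(Z_{G\times\C^\times}(y))$-action are all mutually compatible, so that passing to the $(\sigma,r)$-specialization and then to the $\rho$-isotypic part yields an honest $\mh H(G,M,q\cE)$-module of finite length. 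By \cite[Proposition 3.5]{AMS2} it has central character $(\Ad(G)\sigma\cap\mf t, r)$, and by the nonvanishing criterion via $q\Psi_{Z_G(\sigma_0)}(y,\rho)$ recalled just before the statement, these are exactly the nonzero standard modules.

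For part (a), the key point is that when $r \neq 0$ the variety $\mf g^{\sigma,r}$ (and hence $\mf g_N^{\sigma,r}$) consists entirely of nilpotent elements, so $\sigma_0$ acquires a genuinely negative ``energy'' component and one has a grading on $E_{y,\sigma,r,\rho}$ coming from the $\C^\times$-action (equivalently, from the $\mb r$-weight together with the cohomological degree), in which $\mb r$ acts with a fixed nonzero scalar. The standard argument — the one in \cite[Theorem 4.6]{AMS2} and going back to \cite{Lus-Cusp1} — is that this grading is bounded, the top piece is one-dimensional over the relevant endomorphism algebra, and any proper submodule is contained in the sum of the lower graded pieces; hence the radical is the unique maximal submodule and $M_{y,\sigma,r,\rho} := E_{y,\sigma,r,\rho}/\mathrm{rad}$ is the unique irreducible quotient. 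I would simply cite this. For part (b), when $r = 0$ there is no such grading argument; instead one uses that $\mf g^{\sigma,0} = Z_{\mf g}(\sigma)$ and $E_{y,\sigma,0,\rho}$ is a module over an algebra whose finite-dimensional modules at this central character are semisimple (the $\mb r = 0$ specialization behaves like a crossed product), so $E_{y,\sigma,0,\rho}$ decomposes as a direct sum of irreducibles, and one singles out the summand $M_{y,\sigma,0,\rho}$ via the same extremal (lowest-degree, or $\gamma_y$-isotypic) recipe as in the $r\neq 0$ case — again following \cite[Theorem 4.6]{AMS2}.

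For part (c), the bijection is obtained by combining the classification of $\Irr(\mh H(G,M,q\cE,r))$ from \cite{AMS2}: every irreducible module admits a central character $(\sigma,r)$ with $\sigma\in\mf t$ (up to $W_{q\cE}$), for such a module one recovers $y$ and $\rho$ from its geometric realization on $\mc P_y$ (the ``Langlands-type'' quotient/summand property pins down which standard module it is the head/distinguished summand of), and conversely distinct $G$-association classes of $(y,\sigma,\rho)$ give non-isomorphic $M_{y,\sigma,r,\rho}$ because they have different central characters or differ on $\pi_0(C_y)$-isotypic data; surjectivity onto all of $\Irr_r$ uses that the standard modules $E_{y,\sigma,r,\rho}$ span the Grothendieck group. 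The main obstacle — or rather the only nontrivial content beyond bookkeeping — is the uniqueness of the irreducible quotient in part (a): one must know that the grading argument is not disturbed by the disconnectedness of $G$ and by the twist $\natural_{q\cE}$, i.e.\ that $\pi_0(C_y)$ acts by grading-preserving intertwiners so that taking $\rho$-isotypic components commutes with the grading; granting Theorem \ref{thm:4.1}(c), which says exactly this, the rest is a citation. So in the write-up I would state that parts (a), (b), (c) are \cite[Theorem 4.6]{AMS2}, and add only the short remark that Theorem \ref{thm:4.1} supplies the compatibility of the $\pi_0(C_y)$-action with the grading that the cited proof uses.
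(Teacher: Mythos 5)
Your proposal matches the paper: Theorem \ref{thm:4.2} is simply recalled from \cite[Theorem 4.6]{AMS2} and the paper supplies no independent proof, so reducing everything to that citation (with Theorem \ref{thm:4.1} supplying that $\pi_0 (C_y)$ acts by $\mh H (G,M,q\cE)$-intertwiners compatibly with the $H^*_\Zy (\pt)$-structure) is exactly what is done. One minor caveat: your heuristic for part (b) is not how \cite{AMS2} proceeds — the distinguished summand at $r=0$ is singled out by a specific homological degree of the canonical copy $E_y$ (compare the proof of Proposition \ref{prop:5.1}.c, which cites \cite[Lemma 3.10 and Theorem 3.20]{AMS2}), not by a semisimplicity or crossed-product argument — but since you ultimately defer to the citation this does not affect correctness.
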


\subsection{Relation with automorphisms of $\mh H$} \
\label{par:involutions}

We investigate how the standard $\mh H$-modules change under composition with automorphisms
of $\mh H = \mh H (G,M,q\cE)$. This will help us to make the parametrization of 
$\Irr_r (\mh H)$ from Theorem \ref{thm:4.2} with $r \in \R_{>0}$ compatible with the 
analytic properties temperedness and (essentially) discrete series. When $G$ is connected, that is 
worked out in \cite{Lus-Cusp3}. Unfortunately the outcome is not exactly what we want, 
it rather produces ``anti-tempered" representations where we would like temperedness.

For any $z \in \C^\times$, we have the scaling by degree automorphism
\[
\zeta_z : \mh H \to \mh H ,
\]
which multiplies every element in degree $2n$ by $z^n$. 
In addition to \eqref{eq:2.18}, we note that $\mc P_{zy} = \mc P_y$ and
\[
Z_{G \times \C^\times}(zy) = Z_{G \times \C^\times}(y).
\]

\begin{prop}\label{prop:5.1}
Let $E_{y,\sigma,r,\rho}$ be a geometric standard $\mh H$-module, as in Theorem 
\ref{thm:4.2}. There are canonical isomorphisms of $\mh H$-modules
\enuma{
\item $\zeta_z^* E_{y,\sigma,r} \cong E_{y,z\sigma,zr}$,
\item $\zeta_z^* E_{y,\sigma,r,\rho} \cong E_{y,z\sigma,zr,\rho}$,
\item $\zeta_z^* M_{y,\sigma,r,\rho} \cong M_{y,z\sigma,zr,\rho}$.
}
\end{prop}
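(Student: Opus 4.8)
The plan is to prove part (a) first — the statement about the big module $E_{y,\sigma,r}$ before taking isotypic components — and then deduce (b) and (c) from it by functoriality. For (a), recall that $E_{y,\sigma,r} = \C_{\sigma,r} \otimes_{H^*_\Zy(\pt)} H_*^\Zy(\mc P_y,\dot{q\cE})$, so the heart of the matter is to understand what the automorphism $\sgn^* \IM^*$ does to the geometric module $H_*^\Zy(\mc P_y,\dot{q\cE})$ together with its $H^*_\Zy(\pt)$-action. The key observation should be that the composite involution $\sgn \circ \IM$ acts on the generators as $N_w \mapsto N_w$, $\mb r \mapsto -\mb r$, $\xi \mapsto -\xi$ for $\xi \in \mf t^\vee$; in geometric terms, via the isomorphism of Theorem \ref{thm:1.2} and the identification \eqref{eq:2.2} of $H^*_{G\times\C^\times}(\pt)$ with functions on $\mf g\oplus\C$, this involution corresponds precisely to the sign change $(\sigma,r) \mapsto (-\sigma,-r)$ on the spectrum, i.e.\ to the $\C^\times$-action by $-1$ (which acts as $X \mapsto X$ on $\mf g$ but negates the grading/$\mb r$-variable) combined with the automorphism of $G\times\C^\times$ built from $-\mr{id}$ on the torus direction. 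First I would make this geometric interpretation precise: identify $\sgn\circ\IM$ with an automorphism $\theta$ of the pair $(\mf g, K)$ (or rather of $\dot{\mf g}, \dot{q\cE}$) that fixes $\mf g_N$ pointwise, fixes $\mc P_y$, but reverses the $\C^\times$-grading and negates the $\mf t^\vee$-weights.

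Next I would track the effect of $\theta$ on the character. Since $\theta$ negates the $H^*_\Zy(\pt)$-grading and acts compatibly on $\mr{Lie}(Z_{G\times\C^\times}(y)) = Z_{\mf g\oplus\C}(y)$ — here one uses that the relevant automorphism carries $(\sigma,r)$ to $(-\sigma,-r)$, which still lies in $Z_{\mf g\oplus\C}(y)$ because the condition $[\sigma,y]=2ry$ is preserved under simultaneous negation — we get $\theta^*(\C_{\sigma,r}) \cong \C_{-\sigma,-r}$ as characters of $H^*_\Zy(\pt)$. Combining this with the fact that $\theta^*$ carries $H_*^\Zy(\mc P_y,\dot{q\cE})$ isomorphically to itself as an abelian group, intertwining the $\mh H(G,M,q\cE)$-action twisted by $\sgn\circ\IM$ with the untwisted one, and the $H^*_\Zy(\pt)$-action by its negation, we obtain
\[
\sgn^*\IM^*\big(\C_{\sigma,r} \otimes_{H^*_\Zy(\pt)} H_*^\Zy(\mc P_y,\dot{q\cE})\big) \cong \C_{-\sigma,-r} \otimes_{H^*_\Zy(\pt)} H_*^\Zy(\mc P_y,\dot{q\cE}),
\]
which is exactly the statement of (a). The one subtlety here is checking that $\theta$ genuinely acts on the \emph{variety} $\mc P_y$ and local system $\dot{q\cE}$ and not merely formally on cohomology; I expect this to follow because the defining condition $\Ad(g^{-1})y \in \cC_v^M + \mf u$ of $\mc P_y$ involves only the nilpotent part, which $\theta$ fixes, while the grading/$\mf t^\vee$ reversal is the part of $\theta$ that doesn't touch $G/P$ at all.

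For part (b), the involution $\theta$ fixes $y$ and $\sigma$ up to the substitution $(\sigma,r)\mapsto(-\sigma,-r)$, hence induces (via \eqref{eq:4.5}) an isomorphism $\pi_0(C_y) = \pi_0(Z_{G\times\C^\times}(y,\sigma,r)) \cong \pi_0(Z_{G\times\C^\times}(y,-\sigma,-r))$ that is in fact the identity on $\pi_0$ since $Z_{G\times\C^\times}(y,\sigma,r) = Z_{G\times\C^\times}(y,-\sigma,-r)$ as subgroups (negating both $\sigma$ and $r$ doesn't change the centralizer condition $\Ad(g_1)y = \lambda^2 y$ and $[\sigma,\cdot]=\ldots$). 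Therefore the $\pi_0(C_y)$-equivariant structure is preserved by $\sgn^*\IM^*$ with no twist on $\rho$, and taking $\Hom_{\pi_0(C_y)}(\rho,-)$ on both sides of (a) yields (b). Finally, (c) follows from (b) together with Theorem \ref{thm:4.2}: for $r\neq 0$, $M_{y,\sigma,r,\rho}$ is the unique irreducible quotient of $E_{y,\sigma,r,\rho}$, and since $\sgn^*\IM^*$ is an exact autoequivalence it carries the unique irreducible quotient of $E_{y,\sigma,r,\rho}$ to the unique irreducible quotient of $E_{y,-\sigma,-r,\rho}$, namely $M_{y,-\sigma,-r,\rho}$; for $r=0$ one uses instead that $\sgn^*\IM^*$ carries the distinguished irreducible summand to the distinguished irreducible summand, which requires checking that the autoequivalence respects whatever normalization singles out that summand (likely a temperedness or a lowest-weight condition that is itself visibly preserved under the sign change). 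I expect the main obstacle to be precisely this last point together with the initial geometric identification of $\sgn\circ\IM$ with an honest automorphism of the geometric data — once that dictionary is set up, the rest is bookkeeping with the functors $\Hom_{\pi_0(C_y)}(\rho,-)$ and $\C_{\sigma,r}\otimes_{H^*_\Zy(\pt)}(-)$.
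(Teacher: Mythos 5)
Your strategy stands or falls with the existence of the automorphism $\theta$, and that is precisely where it fails: $\sgn \circ \IM$ is \emph{not} induced by any automorphism of the geometric data $(G \times \C^\times, \mf g, K)$ or $(\dot{\mf g},\dot{q\cE})$. To negate $\mb r \in H^2_{\C^\times}(\pt)$ you would need inversion on the $\C^\times$-factor, and a compatible map $f$ of $\mf g$ would have to satisfy $f(\mu X) = \mu^{-1} f(X)$ for all $\mu \in \C^\times$, which no morphism of the affine variety $\mf g$ (containing $0$) can do; likewise, an automorphism of $G$ inducing $-1$ on $\mf t \subset H^*_{G\times\C^\times}(\dot{\mf g})$ would be of Chevalley type and could not ``fix $\mf g_N$ pointwise'' or fix $y$ and $\mc P_y$, as your sketch requires. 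The subtlety you flag (``checking that $\theta$ genuinely acts on the variety and local system and not merely formally on cohomology'') is therefore not a loose end but the fatal point: $\sgn\circ\IM$ exists only formally, as the automorphism multiplying degree-$2n$ elements of the evenly graded algebra by $(-1)^n$, and the comparison of $E_{y,\sigma,r}$ with $E_{y,-\sigma,-r}$ must be carried out at the level of the graded module $H_*^\Zy(\mc P_y,\dot{q\cE})$. This is what the paper does: it identifies both specializations, canonically and $\C[W_{q\cE},\natural_{q\cE}]$-equivariantly, with the non-equivariant homology $H_*(\mc P_y,\dot{q\cE})$ (via Lusztig's freeness result), and then checks that transporting the $\mc O(\mf t\oplus\C)$-action through this identification changes it exactly by the sign on $\mf t^\vee$ and $\mb r$.

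The downstream parts inherit the gap. For (b), observing that $Z_{G\times\C^\times}(y,\sigma,r) = Z_{G\times\C^\times}(y,-\sigma,-r)$ is correct but not enough: one must show that the \emph{specific} isomorphism of (a) intertwines the two $\pi_0(C_y)$-actions. The paper does this by producing a canonical graded complement $E_y \subset H_*^\Zy(\mc P_y,\dot{q\cE})$ (using that the module is free over $H^*_\Zy(\pt)$), showing the isomorphism of (a) factors as $E_{y,\sigma,r} \to E_y \to E_{y,-\sigma,-r}$, and that the $C_y$-action on $E_y$ is independent of the choice of $(\sigma,r)$ versus $(-\sigma,-r)$. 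For (c) with $r=0$ your guess that the distinguished summand is singled out by ``temperedness or a lowest-weight condition'' is off the mark: it is singled out by a homological degree, and the paper's factorization through $E_y$ preserves degrees, which is what matches the distinguished summands. So the overall architecture (prove (a), deduce (b) by taking $\rho$-isotypic parts, deduce (c) from uniqueness of the irreducible quotient/summand) agrees with the paper, but the core mechanism you propose for (a) does not exist, and the equivariance and degree arguments needed for (b) and (c) are missing.
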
 
\begin{proof}
(a) By \cite[Proposition 8.6]{Lus-Cusp1} and \cite[Theorem 3.2 and (91)]{AMS2}, the graded
$\mh H$-module $H_*^\Zy (\mc P_y, \dot{q\cE})$ only has terms in even degrees. 
We define the linear bijection 
\begin{align*}
& \lambda_z : H_*^\Zy (\mc P_y, \dot{q\cE}) \to H_*^\Zy (\mc P_y, \dot{q\cE}) \\
& \lambda_z (x) = z^d x \quad \text{if} \deg (x) = 2d.
\end{align*}
This can be regarded as an isomorphism of $\mh H$-modules
\begin{equation}\label{eq:5.4}
(\zeta_z)^* H_*^\Zy (\mc P_y, \dot{q\cE}) \xrightarrow{\lambda_z} H_*^\Zy (\mc P_y, \dot{q\cE}) .
\end{equation}
Analogously we define the algebra automorphism
\[
\lambda'_z : H^*_\Zy (\pt) \to H^*_\Zy (\pt),
\]
and we note that 
\[
\lambda'_z (\ker (\mr{ev}_{\sigma,r}))  = \ker (\mr{ev}_{z\sigma,zr}).
\]
As $H_*^\Zy (\mc P_y, \dot{q\cE})$ is also a graded $H^*_\Zy (\pt)$-module:
\[
\lambda_z (m \cdot x) = \lambda'_z (m) \lambda (x) \quad \forall m \in H^*_\Zy (\pt),
x \in H_*^\Zy (\mc P_y, \dot{q\cE}) .
\]
It follows that \eqref{eq:5.4} induces an isomorphism of $\mh H$-modules
\begin{multline}\label{eq:5.2}
(\zeta_z)^* E_{y,\sigma,r} = (\zeta_z)^* \big( H_*^\Zy (\mc P_y, \dot{q\cE}) /
\ker (\mr{ev}_{\sigma,r}) H_*^\Zy (\mc P_y, \dot{q\cE}) \big) \xrightarrow{\lambda_z} \\
\big( H_*^\Zy (\mc P_y, \dot{q\cE}) / \ker (\mr{ev}_{z\sigma,zr}) H_*^\Zy (\mc P_y, \dot{q\cE})
\big) = E_{y,z\sigma,zr} .
\end{multline}
(b) The group $C_y$ acts naturally on $H_*^\Zy (\mc P_y, \dot{q\cE})$ and that induces the action of 
$\pi_0 (C_y) \cong \pi_0 (Z_G (y,\sigma))$ on $E_{y,\sigma,r}$ and on $E_{y,z\sigma,zr}$. 
The $C_y$-action on $H_*^\Zy (\mc P_y, \dot{q\cE})$ preserves the degrees, so it commutes with 
$\lambda_z$. It follows that \eqref{eq:5.2} is $C_y$-equivariant as well. In particular, for any 
$\rho \in \Irr (\pi_0 (C_y) )$, \eqref{eq:5.2} induces isomorphisms of $\mh H$-modules
\begin{equation}\label{eq:5.8}
(\zeta_z)^* E_{y,\sigma,r,\rho} = \Hom_{C_y} (\rho, (\zeta_z)^* E_{y,\sigma,r}) 
\xrightarrow{\lambda_z} \Hom_{C_y} (\rho, E_{y,z\sigma,zr}) = E_{y,z\sigma,zr,\rho} . 
\end{equation} 
(c) When $r \neq 0$, \eqref{eq:5.8} sends the unique irreducible quotient $(\zeta_z)^* M_{y,
\sigma,r,\rho}$ on the left to the unique irreducible quotient $M_{y,z\sigma,zr,\rho}$ on the right. 

When $r = 0$, the distinguished irreducible summand $(\zeta_z)^* M_{y,\sigma,0,\rho}$ of
$(\zeta_z)^* E_{y,\sigma,0,\rho}$ is in the component of $E_{y,\sigma,0,\rho} \cong 
H_* (\mc P_y,\dot{q\cE})$ in one particular homological degree \cite[Lemma 3.10 and 
Theorem 3.20]{AMS2}. As \eqref{eq:5.2} preserves these homological degrees, it sends 
$(\zeta_z)^* M_{y,\sigma,0,\rho}$ to the distinguished irreducible summand $M_{y,z\sigma,0,\rho}$ 
of $E_{y,z\sigma,0,\rho}$.
\end{proof} 

We recall from \cite[Lemma 2.1]{AMS2} that $W_{q\cE} = W_{q\cE}^\circ \rtimes \Gamma_{q\cE}$,
where $W_{q\cE}^\circ$ is the Weyl group of a root system and $\Gamma_{q\cE}$ is the stabilizer
in $W_{q\cE}$ of the set of positive roots. We extend the sign character of $W_{q\cE}^\circ$ to
$W_{q\cE}$ by making it trivial on $\Gamma_{q\cE}$. \\

\textbf{Remark.}
There is an alternative way to extend the sign character of $W_{q\cE}^\circ$ to
$W_{q\cE}$, namely as $\det_{X_* (T)}$. This determinant is a $\Z$-valued character of
$W_{q\cE}$, so it is quadratic. For the purposes of this paper, the above sign character and
$\det_{X_* (T)}$ are equally good. However, the results from \cite[\S 6.2]{SolQS} (which was
written later than this paper) indicate that $\det_{X_* (T)}$ is more natural. \\

To improve the temperedness properties of standard 
$\mh H (G,M,q\cE)$-modules, one can use the Iwahori--Matsumoto involution, given by 
\[
\IM (N_w) = \sgn (w) N_w ,\quad \IM (\mb r) = \mb r ,\quad 
\IM (\xi) = -\xi \qquad w \in W_{q\cE}, \xi \in \mf t^\vee .
\]
Composing a $\mh H$-module with IM changes its $\mc O (\mf t)$-weights by a factor -1.
To compensate for that, in \cite{AMS2,AMS3} the 
authors associate to $(y,\sigma,\rho,r)$ and $(y,\sigma_0,\rho,r)$ the modules
\[
\IM^* E_{y,\textup{d}\gamma_y \matje{r}{0}{0}{-r} - \sigma_0,r,\rho} \qquad \text{and} \qquad
\IM^* M_{y,\textup{d}\gamma_y \matje{r}{0}{0}{-r} - \sigma_0,r,\rho} .
\]
We note that $(\textup{d}\gamma_y \matje{r}{0}{0}{-r} - \sigma_0,r) \in Z_{\mf g \oplus \C}(y)$ and 
\[
(\textup{d}\gamma_y \matje{-r}{0}{0}{r} - \sigma_0,-r) = (-\sigma,-r)  \in Z_{\mf g \oplus \C}(y).
\] 
We define the sign involution of $\mh H$ by 
\[
\sgn (N_w) = \sgn (w) N_w ,\quad \sgn (\mb r) = -\mb r ,\quad 
\sgn |_{\mc O (\mf t)} = \mr{id}_{\mc O (\mf t)} .
\]
The Iwahori--Matsumoto and sign involutions commute and
\[
\sgn \circ \IM = \IM \circ \sgn = \zeta_{-1} : \; \mh H \to \mh H.
\]
From Proposition \ref{prop:5.1} we know that composing with the involution $\sgn \circ \IM =
\zeta_{-1}$ has an easy effect on standard modules. That enables us to reformulate the results of 
\cite{AMS2} which use the Iwahori--Matsumoto involution in terms of the sign involution of $\mh H$.
In particular we see that the module $\sgn^* E_{y,\sigma,-r,\rho}$ is isomorphic to 
$\IM^* E_{y,-\sigma,r,\rho}$. In \cite{AMS2} the latter module was associated to the data 
\[
(y,\sigma_0,r,\rho) \quad \text{and} \quad (y,\sigma_0 + \textup{d}\gamma_y \matje{r}{0}{0}{-r},r,\rho).
\] 
Similar statements holds without $\rho$ and with $E$ replaced by $M$. This is the class of modules 
that is standard in the analytic sense related to the Langlands classification, see 
\cite[\S 3.5]{SolHecke}. To distinguish them from the earlier geometric standard modules, we refer to 
$\sgn^* E_{y,\sigma,-r,\rho}$ as an analytic standard $\mh H (G,M,q\cE)$-module.

Using the sign automorphism we can vary on \cite[Theorem 4.6]{AMS2}. Fix $r \in \C$ and consider 
triples $(y,\sigma,\rho)$ such that:
\begin{itemize}
\item $y \in \mf g$ is nilpotent,
\item $\sigma \in \mf g$ is semisimple and $[\sigma,y] = -2ry$,
\item $\rho \in \Irr \big(\pi_0 (Z_G (\sigma,y)) \big)$ and $q\Psi_{Z_G (\sigma_0)}(y,\rho) =
(M,\cC_v^M,q\cE)$ up to $G$-conjugacy.
\end{itemize}
By Theorem \ref{thm:4.2}.c the map
\begin{equation}\label{eq:5.9}
(\sigma,y,\rho) \mapsto \sgn^* (M_{y,\sigma,-r,\rho})
\end{equation}
defines a bijection from the set of $G$-conjugacy classes of triples $(y,\sigma,\rho)$ as above
to $\Irr_r (\mh H)$. Notice that the central character of $\sgn^* (M_{y,\sigma,-r,\rho})$
is $(\sigma + r \sigma_v, r)$ when $\sigma \in \mf t_{-r}$.
This constitutes an improvement on \cite[\S 3.5]{AMS2} because our new pa\-ra\-me\-trization 
of $\Irr_r (\mh H)$ has all the desired properties with respect to temperedness (see below)
and is more natural -- we do not have to involve d$\gamma_y \matje{r}{0}{0}{-r}$ any more.

\begin{thm}\label{thm:4.3}
\textup{\cite[Theorem 3.25, Theorem 3.26 and \S 4]{AMS2}} \\
Consider an analytic standard $\mh H$-module $\sgn^* E_{y,\sigma,-r,\rho}$.
\enuma{
\item Suppose that $\Re (r) \geq 0$. The $\mh H$-modules $\sgn^* (E_{y,\sigma,-r,\rho})$ and 
$\sgn^* (M_{y,\sigma,-r,\rho})$ are tempered if and only if $\sigma_0$ lies in 
$i \mf t_\R = i \R \otimes_\Z X_* (T)$. 

Here $\sigma_0 = \sigma + \textup{d}\gamma_y \matje{r}{0}{0}{-r}$ 
is as in \eqref{eq:4.4}, but with $-r$ instead of $r$.
\item Suppose that $\Re (r) > 0$. Then $\sgn^* (E_{y,\sigma,-r,\rho})$ and $\sgn^* (M_{y,\sigma,-r,\rho})$ 
are essentially discrete series if and only if $y$ is distinguished in $\mf g$. 

Moreover, when these conditions are fulfilled
\[
\sgn^* (E_{y,\sigma,-r,\rho}) = \sgn^* (M_{y,\sigma,-r,\rho}) \in \Irr_r (\mh H).
\]
}
\end{thm}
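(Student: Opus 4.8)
The plan is to reduce Theorem \ref{thm:4.3} to \cite[Theorems 3.25 and 3.26]{AMS2} by means of the identification of $\sgn^*$-twisted with $\IM^*$-twisted standard modules supplied by Proposition \ref{prop:5.1}. First I would record the precise dictionary. Since $\sgn^*$ and $\IM^*$ commute and $\sgn^*$ is an involution, applying $\sgn^*$ to Proposition \ref{prop:5.1}.b with $\sigma$ replaced by $-\sigma$ gives canonical isomorphisms of $\mh H (G,M,q\cE)$-modules
\[
\sgn^* E_{y,\sigma,-r,\rho} \cong \IM^* E_{y,-\sigma,r,\rho}, \qquad
\sgn^* M_{y,\sigma,-r,\rho} \cong \IM^* M_{y,-\sigma,r,\rho} ,
\]
and the right-hand sides are exactly the modules studied in \cite{AMS2}: with $\sigma_0 = \sigma + \textup{d}\gamma_y\matje{r}{0}{0}{-r}$ one has $\textup{d}\gamma_y\matje{r}{0}{0}{-r} - \sigma_0 = -\sigma$, so these are the modules there attached to the triple $(y,\sigma_0,\rho)$, as spelled out in the paragraph preceding the theorem. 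Temperedness, essential discrete series, and irreducibility are intrinsic properties of the underlying $\mh H (G,M,q\cE)$-module, hence they transport verbatim along these isomorphisms.

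For part (a), \cite[Theorem 3.25]{AMS2} states that $\IM^* E_{y,-\sigma,r,\rho}$ and its irreducible quotient (resp.\ distinguished summand) are tempered if and only if the element playing the role of $\sigma_0$ for the data $(y,-\sigma,r,\rho)$ lies in $i\mf t_\R$. I would verify, by substituting into \eqref{eq:4.4} with $r$ and $\sigma$ replaced as above, that this $\sigma_0$ is precisely the element $\sigma + \textup{d}\gamma_y\matje{r}{0}{0}{-r}$ named in the statement — bearing in mind that $\sigma_0$ is the twist of $\sigma$ by the semisimple part of $\textup{d}\gamma_y$ prescribed by the homomorphism $\gamma_y : SL_2(\C) \to G^\circ$. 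The hypothesis $\Re(r)\ge 0$ is the range in which the temperedness criterion of \cite{AMS2} is valid. For part (b), \cite[Theorem 3.26]{AMS2} gives that $\IM^* E_{y,-\sigma,r,\rho}$ and $\IM^* M_{y,-\sigma,r,\rho}$ are essentially discrete series exactly when $y$ is distinguished in $\mf g$, and that in that case the standard module equals its irreducible quotient; since the condition on $y$ involves neither $\sigma$ nor the twist, it transfers unchanged. The resulting equality $\sgn^* E_{y,\sigma,-r,\rho} = \sgn^* M_{y,\sigma,-r,\rho}$, combined with $\Re(r)>0$ (so $r\neq 0$) and Theorem \ref{thm:4.2}.a,c — which identifies the modules $M_{y,\sigma,r,\rho}$ with $\Irr_r(\mh H (G,M,q\cE))$ — places this module in $\Irr_r(\mh H (G,M,q\cE))$; its central character is $(\sigma,r)$ as noted before the theorem.

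The only genuine obstacle is bookkeeping: matching the successive substitutions (first replacing $(\sigma,r)$ by $(-\sigma,r)$, then absorbing the remaining sign through Proposition \ref{prop:5.1}) and the two separate occurrences of a sign on $r$ inside $\sigma_0$ — one coming from the $SL_2$-triple $\gamma_y$, one from the parameter itself — against the conventions of \cite{AMS2,SolHecke}. Once that dictionary is pinned down, each assertion of Theorem \ref{thm:4.3} follows directly from the cited results.
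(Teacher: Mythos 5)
Your proposal is correct and matches the paper's own treatment: Theorem \ref{thm:4.3} is stated there as an import of \cite[Theorems 3.25, 3.26 and \S 4]{AMS2}, justified exactly by the dictionary $\sgn^* E_{y,\sigma,-r,\rho} \cong \IM^* E_{y,-\sigma,r,\rho}$ from Proposition \ref{prop:5.1} and the bookkeeping with $\sigma_0 = \sigma + \textup{d}\gamma_y \matje{r}{0}{0}{-r}$ carried out in the paragraph preceding the theorem. Your identification of the relevant $\sigma_0$ and the use of Theorem \ref{thm:4.2} for the final membership in $\Irr_r (\mh H (G,M,q\cE))$ are exactly what the paper relies on.
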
 

In terms of Theorems \ref{thm:4.2} and \ref{thm:4.3}, 
the bijection from \cite[Theorem 1.5]{SolSGHA} becomes
\begin{equation}
\begin{array}{ccc}
\Irr_r ( \mh H ) & \longrightarrow & \Irr_0 ( \mh H ) \\ 
\sgn^* (M_{y,\sigma,-r,\rho}) & \mapsto & \sgn^* (M_{y,\sigma,0,\rho}) 
\end{array} .
\end{equation}
We would like to analyse the right $\mh H$-modules from Theorem \ref{thm:2.5} 
as left modules over the opposite algebra $\mh H^{op}$. This opposite algebra is easily 
identified via the isomorphism
\begin{equation}\label{eq:5.1}
\begin{array}{ccc@{\qquad}c}
\mh H (G,M,q\cE)^{op} & \isom & \mh H (G,M,q\cE^\vee) & \\
N_w \xi & \mapsto & \xi (N_w)^{-1} & w \in W_{q\cE} , \xi \in \mc O (\mf t \oplus \C) ,
\end{array}
\end{equation}
see \cite[(14)]{AMS2}. That gives an equivalence of categories
\begin{equation}\label{eq:5.11}
\mh H (G,M,q\cE) -\Mod \cong \Mod - \mh H (G,M,q\cE^\vee) .
\end{equation}
The dual local system $q\cE^\vee$ on $\cC_v^M$ is also cuspidal, so all the
previous results hold just as well for $\mh H (G,M,q\cE^\vee)$. In particular we have a complete
classification of its irreducible and its standard left modules.

\subsection{Construction from $K_{\sigma,r}$ or $K_{N,\sigma,r}$} \
\label{par:construction}

We want to relate the standard modules of $\mh H$ (or its opposite) to Theorem \ref{thm:2.5}. 
The vector spaces $H^* (\{y\},i_y^! K_{\sigma,r})$ and $H^* (\{y\},i_y^* K_{\sigma,r})$ become left \\
$\End^*_{\mc D^b_{Z_G (\sigma) \times \C^\times} (\mf g^{\sigma,r})}(K_{\sigma,r})$-modules via the 
natural algebra homomorphism
\begin{equation}\label{eq:5.12}
\End^*_{\mc D^b_C (\mf g^{\sigma,r})}(K_{\sigma,r}) \to
\big( \End^*_{\mc D^b_{C_y^\circ} (\{y\})} (i_y^{! / *} K_{\sigma,r}) \big)^{\pi_0 (C_y)} \to
\End^*_{\mc D^b (\{y\})} (i_y^{! / *} K_{\sigma,r}) ,
\end{equation}
which specializes $H^*_{C_y^\circ} (\{y\})$ at $(\sigma,r)$, see \cite[\S 10.2]{Lus-Cusp2}. 
Via Theorem \ref{thm:2.5}.b, $H^* (i_y^! K_{\sigma,r})$ and $H^* (i_y^* K_{\sigma,r})$ also become left 
$\mh H$-modules. By \cite[\S 10.4]{Lus-Cusp2}, and as in Proposition \ref{prop:2.6}, they 
carry natural actions of $\pi_0 (C_y)$, which commute with the $\mh H$-actions. 
The modules $H^* (\{y\},i_y^! K_{\sigma,r})$ and $H^* (\{y\},i_y^* K_{\sigma,r})$ are annihilated by
$\mb r - r$, so they descend to $\mh H (G,M,q\cE,r)$-modules. From Theorem \ref{thm:8.3} we see that
the action of $\mh H (G,M,q\cE,r)$ can also be constructed more directly, as in \eqref{eq:5.12}
with $Z_G (\sigma)$ instead of $C = Z_G (\sigma) \times \C^\times$.

Let $K_{\sigma,r}^\vee \in \mc D^b_C (\mf g^{\sigma,r})$ be the analogue of $K_{\sigma,r}$, 
but constructed from $q\cE^\vee$. 

\begin{prop}\label{prop:6.2}
Assume Condition \ref{cond:2.1} and denote the subvariety of $\exp(\sigma)$-fixed 
points in $\mc P_y$ by $\mc P_y^\sigma$. 
\enuma{
\item There are natural isomorphisms of $\mh H (G,M,q\cE) \times \pi_0 (C_y)$-representations  
\[
\begin{array}{ccl}
H^* (\{y\}, i_y^! K_{\sigma,r}) & \cong & H_* (\mc P_y^\sigma, \dot{q\cE}) \; \cong \; E_{y,\sigma,r} , \\
H^* (\{y\}, i_y^* K_{\sigma,r}) & \cong & \C_{\sigma,r} \underset{H^*_{C_y^\circ} (\{y\})}{\otimes} 
H^*_{C_y^\circ} (\mc P_y^\sigma, \dot{q\cE}) .
\end{array}
\]
\item There are natural isomorphisms of $\mh H (G,M,q\cE^\vee) \times \pi_0 (C_y)$-representations  
\[
\begin{array}{cclll}
H^* (\{y\}, i_y^! K_{\sigma,r})^\vee & \cong &   
H^* (\{y\}, i_y^* K_{\sigma,r}^\vee ) & \cong & E_{y,\sigma,r}^\vee ,\\
H^* (\{y\}, i_y^* K_{\sigma,r})^\vee & \cong & H^* 
(\{y\}, i_y^! K_{\sigma,r}^\vee) & \cong & E_{y,\sigma,r} ,
\end{array}
\]
not necessarily preserving the gradings.
\item Parts (a) and (b) are also valid with $(\mf g_N^{\sigma,r}, K_{N,\sigma,r})$ instead of
$(\mf g^{\sigma,r}, K_{\sigma,r})$.
}
\end{prop}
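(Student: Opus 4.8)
The plan is to reduce the whole statement to proper base change along the maps $\pr_1$, combined with the descriptions of the standard modules from \cite{Lus-Cusp1,Lus-Cusp2,AMS2} and the localization in Theorem \ref{thm:2.5}. Condition \ref{cond:2.1} is in force throughout, so the constructions of Section \ref{sec:standard} are available.

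\emph{Part (a).} Since $y$ is nilpotent, the fibre of $\pr_1 : \dot{\mf g}^{\sigma,r} \to \mf g^{\sigma,r}$ over $y$ is $\{y\} \times \mc P_y^\sigma$: for nilpotent $y$, $\Ad(g^{-1})y \in \cC_v^M \oplus \mf t \oplus \mf u$ forces $\Ad(g^{-1})y \in \cC_v^M + \mf u$, and the remaining fixed-point condition on $G/P$ cuts out $\mc P_y^\sigma$. This $\pr_1$ is proper (recalled from \cite[\S 8.12]{Lus-Cusp2}), so $\mc P_y^\sigma$ is compact and proper base change in the Cartesian square with bottom arrow $i_y$ and top arrow $\iota : \{y\} \times \mc P_y^\sigma \hookrightarrow \dot{\mf g}^{\sigma,r}$ gives
\[
i_y^! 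K_{\sigma,r} \;\cong\; (\pr_1)_! \iota^!(j^* \dot{q\cE}) , \qquad
i_y^* K_{\sigma,r} \;\cong\; (\pr_1)_! \big( (j^* \dot{q\cE})|_{\mc P_y^\sigma} \big) ,
\]
now with $\pr_1 : \mc P_y^\sigma \to \{y\}$. Passing to hypercohomology, and using that $\dot{\mf g}^{\sigma,r}$ is smooth and $\mc P_y^\sigma$ compact (so that the standard Verdier-duality formulas convert the $\iota^!$- and $\iota^*$-restrictions of a local system into Borel--Moore homology, resp.\ cohomology, of $\mc P_y^\sigma$), one identifies $H^*(\{y\}, i_y^! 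K_{\sigma,r})$ with $H_*(\mc P_y^\sigma, \dot{q\cE})$ and $H^*(\{y\}, i_y^* K_{\sigma,r})$ with $\C_{\sigma,r} \otimes_{H^*_{C_y^\circ}(\{y\})} H^*_{C_y^\circ}(\mc P_y^\sigma, \dot{q\cE})$, the latter via the specialization of $C_y^\circ$-equivariant cohomology at $(\sigma,r)$ (using the cohomological analogue of the freeness in Theorem \ref{thm:4.1}.b). These identifications are $\mh H(G,M,q\cE)$-linear because the action on the left, defined by \eqref{eq:5.12} and Theorem \ref{thm:2.5}.b, and the action on the right, defined by the $\C[W_{q\cE},\natural_{q\cE}]$-action on $K$ restricted to $\mc P_y^\sigma$ together with the $\mc O(\mf t \oplus \C)$-action from equivariant cohomology, both descend from the same structure on $\dot{\mf g}^{\sigma,r}$; for connected $G$ this comparison is \cite[\S 10]{Lus-Cusp2} (applicable here by Appendix \ref{sec:B}), and the disconnected case follows by the component-group bookkeeping of Propositions \ref{prop:2.3} and \ref{prop:2.4}. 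The commuting $\pi_0(C_y)$-action is the naturality of base change together with \cite[\S 4]{AMS2}. Finally, $H_*(\mc P_y^\sigma, \dot{q\cE}) \cong E_{y,\sigma,r}$ is the comparison of this fixed-point model with $\C_{\sigma,r} \otimes_{H^*_\Zy(\pt)} H_*^\Zy(\mc P_y, \dot{q\cE})$, obtained by localizing equivariant homology to the $T_{\sigma,r}$-fixed locus $\mc P_y^\sigma$ as in \cite[\S\S 8, 10]{Lus-Cusp2}, again with Theorem \ref{thm:4.1}.b and, for $r = 0$, \cite[Proposition 2.10]{SolSGHA}.

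\emph{Parts (b) and (c).} For (b) I would run Verdier duality at the level of sheaves and of the point. By \eqref{eq:2.13}, $K_{\sigma,r} = D(K_{\sigma,r}^\vee)[-\dim_\R \mf g^{\sigma,r}]$, and since $D$ interchanges $i_y^!$ with $i_y^*$ and on $\mc D^b(\{y\})$ is the degree-reversing linear dual, one gets the first isomorphism in each line of (b), e.g.\ $H^*(\{y\}, i_y^! K_{\sigma,r})^\vee \cong H^{\dim_\R \mf g^{\sigma,r} - *}(\{y\}, i_y^* K_{\sigma,r}^\vee)$. The remaining isomorphism follows from part (a) applied to $q\cE^\vee$ (hence to $K_{\sigma,r}^\vee$) together with Poincaré--Verdier duality between cohomology and Borel--Moore homology on the compact $\mc P_y^\sigma$. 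The switch of algebras is tracked through $\mh H(G,M,q\cE)^{op} \cong \mh H(G,M,q\cE^\vee)$ from Paragraph \ref{par:involutions}, under which $\End^*(K_{\sigma,r})^{op} \cong \End^*(K_{\sigma,r}^\vee)$ via $D$; the shifts $* \mapsto \dim_\R \mf g^{\sigma,r} - *$ are harmless because these modules carry no compatible grading once $\mb r$ and $\mf t$ are specialized. For (c): when $r \neq 0$ one has $\mf g_N^{\sigma,r} = \mf g^{\sigma,r}$ and $K_{N,\sigma,r} = K_{\sigma,r}$, so there is nothing to prove; when $r = 0$, $y$ lies in $\mf g_N^{\sigma,0} = Z_{\mf g}(\sigma)_N$ and the fibre of $\pr_{1,N} : \dot{\mf g}_N^{\sigma,0} \to \mf g_N^{\sigma,0}$ over $y$ is again $\{y\} \times \mc P_y^\sigma$, so the base-change arguments of (a) and (b) apply verbatim with subscripts $N$, now invoking Theorem \ref{thm:2.5}.c (equivalently Theorem \ref{thm:8.3}.c) and \cite[Proposition 2.10]{SolSGHA} in place of Theorem \ref{thm:2.5}.b.

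\emph{Main obstacle.} The delicate point is not the sheaf theory but matching the module structures: showing the base-change isomorphisms are $\mh H(G,M,q\cE)$-linear, i.e.\ reconciling the action built from \eqref{eq:5.12} and Theorem \ref{thm:2.5} with Lusztig's geometric action on $H_*(\mc P_y^\sigma, \dot{q\cE})$, and passing from connected $G$ (where this is \cite[\S 10]{Lus-Cusp2}) to disconnected $G$, where the extra part $W_{q\cE}/W_\cE^\circ$ of the Weyl-like group and the $2$-cocycle $\natural_{q\cE}$ must be matched through the component-group decompositions. The degree and duality bookkeeping in (b) is then routine.
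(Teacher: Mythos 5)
Your proposal follows essentially the same route as the paper's proof: base change (for the proper map $\pr_1$) in the Cartesian square over $y$, identification of the fibre with $\{y\}\times\mc P_y^\sigma$, Verdier duality/compactness to turn $i_y^!$ and $i_y^*$ into Borel--Moore homology resp.\ cohomology of $\mc P_y^\sigma$ with coefficients $\dot{q\cE}$, the localization arguments of \cite[\S\S 8, 10]{Lus-Cusp2} (Proposition 10.12) to identify these with $E_{y,\sigma,r}$ and its costandard companion, the duality \eqref{eq:2.13} plus the op-isomorphism for part (b), and the observation that $r\neq 0$ gives $K_{N,\sigma,r}=K_{\sigma,r}$ while $r=0$ is handled verbatim for part (c). The points you flag as delicate (matching the module structures and extending Lusztig's connected-group arguments to disconnected $G$) are exactly the ones the paper also handles by deferring to \cite[\S 10]{Lus-Cusp2}, \cite[\S 2.1]{SolSGHA} and \cite[\S 4]{AMS2}, so the proposal is correct and not materially different.
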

\begin{proof}
(a) When $G$ is connected, the isomorphisms with $H^* (\{y\}, i_y^! K_{\sigma,r})$ are shown in 
\cite[Proposition 10.12]{Lus-Cusp2}. We generalize those arguments to our setting. 
Consider the pullback diagram \vspace{-2mm}
\[
\xymatrix{
\mc P_y^\sigma \ar[r]^{k} \ar[d]^{\pi} & \dot{\mf g}^{\sigma,r} \ar[d]^{\pr_1} \\
\{y\} \ar[r]^{i_y} & \mf g^{\sigma,r} 
}
\]
where $k (gP) = (y,gP)$. By the conventions at the start of Section \ref{sec:standard} we have
$k^* \dot{q\cE} = \dot{q\cE}$, where the first $\dot{q\cE}$ lives on $\dot{\mf g}_{\sigma,r}$ 
and the second on $\mc P_y^\sigma$.
From general results about derived sheaves, \cite[\S 1.4.6 and Theorem 1.8.ii]{BeLu}, extended to 
the equivariant derived category in \cite[Theorem 3.4.3]{BeLu}, it is known that
\begin{equation}\label{eq:5.10}
i_y^* \pr_{1,*} = \pi_* k^* \qquad \text{and} \qquad i_y^! \pr_{1,*} = \pi_* k^!
\end{equation}
as functors $\mc D^b_C (  \dot{\mf g}^{\sigma,r} ) \to \mc D^b_{C_y} (\{y\})$. With that we compute
\[
\begin{array}{lllllll}
H^* (\{y\}, i_y^! K_{\sigma,r}) & \cong & H^* (\{y\}, i_y^! \pr_{1,*} \dot{q\cE}) & \cong &
H^* (\{y\}, \pi_* k^! \dot{q\cE}) \\
& \cong & H^* (\mc P_y^\sigma, k^! \dot{q\cE}) & \cong & H^* (\mc P_y^\sigma, D k^* D \dot{q\cE}) \\
& \cong & H^* (\mc P_y^\sigma, D k^* \dot{q\cE}^\vee ) & = & H^* (\mc P_y^\sigma, D \dot{q\cE}^\vee)
& = & H_* (\mc P_y^\sigma, \dot{q\cE}) .
\end{array}
\]
The last part of the proof of \cite[Proposition 10.12]{Lus-Cusp2} shows that
\[
H_* (\mc P_y^\sigma, \dot{q\cE}) \cong \C_{\sigma,r} \underset{H^*_{C_y^\circ} (\{y\})}{\otimes} 
H_*^{C_y^\circ} (\mc P_y^\sigma, \dot{q\cE}) \cong \C_{\sigma,r} \underset{H^*_{C_y^\circ} (\{y\})}{
\otimes} H_*^{C_y^\circ} (\mc P_y, \dot{q\cE}) = E_{y,\sigma,r}
\]
as $\mh H (G,M,q\cE^\vee) \times \pi_0 (C_y)$-representations.
Similarly we use \eqref{eq:5.10} to compute
\[
\begin{array}{lllll}
H^* (\{y\}, i_y^* K_{\sigma,r}) & \cong & H^* (\{y\}, i_y^* \pr_{1,*} \dot{q\cE}) & \cong &
H^* (\{y\}, \pi_* k^* \dot{q\cE}) \\
& \cong & H^* (\{y\}, \pi_* \dot{q\cE}) & \cong & H^* (\mc P_y^\sigma, \dot{q\cE}) .
\end{array}
\]
Notice that $\mc P_y^\sigma$ is compact, so cohomology coincides with compactly supported cohomology 
here. The last part of the proof of \cite[Proposition 10.12]{Lus-Cusp2} also shows that there is an
isomorphism of $\mh H (G,M,q\cE^\vee) \times \pi_0 (C_y)$-representations
\[
H^* (\mc P_y^\sigma, \dot{q\cE}) \cong \C_{\sigma,r} \underset{H^*_{C_y^\circ} (\{y\})}{\otimes} 
H^*_{C_y^\circ} (\mc P_y^\sigma, \dot{q\cE}) \cong
\C_{\sigma,r} \underset{H^*_{C_y^\circ} (\{y\})}{\otimes} H^*_{C_y^\circ} (\mc P_y, \dot{q\cE}) .
\]
(b) By \eqref{eq:2.21}, \eqref{eq:2.13} and the properties of Verdier duality \cite[\S 2.8 and Lemma
3.3.13]{Ach} there are natural isomorphisms (which may shift the gradings by different amounts
on different simple summands)
\begin{equation}\label{eq:6.1}
\begin{aligned}
D K_{\sigma,r} & = \pr_{1,*} D \IC_{G \times \C^\times} (\mf g^{\sigma,r} \times 
(G/P)^{\exp (\C \sigma)}, \dot{q\cE}) \\
& \cong \pr_{1,!} \IC_{G \times \C^\times} (\mf g^{\sigma,r} \times 
(G/P)^{\exp (\C \sigma)}, D \dot{q\cE}) \\
& \cong \pr_{1,!} \IC_{G \times \C^\times} (\mf g^{\sigma,r} \times 
(G/P)^{\exp (\C \sigma)}, \dot{q\cE^\vee}) \; = \; K^\vee_{\sigma,r}.
\end{aligned}
\end{equation}
From part (a) and \eqref{eq:6.1} we see that 
\begin{align*}
H^* (\{y\}, i_y^! K_{\sigma,r})^\vee & \cong H^{-*} (\{y\}, D i_y^! K_{\sigma,r}) \cong
H^{-*} (\{y\}, i_y^* D K_{\sigma,r}) \\
& \cong H^{-*} (\{y\}, i_y^* K_{\sigma,r}^\vee ) \cong H^{-*} (\{y\}, i_y^* K_{\sigma,r}^\vee) .
\end{align*}
Here $-*$ means that initially the grading is reversed (by $\vee$), while in the last line 
the grading must also be adjusted to account for \eqref{eq:6.1}. 
Similarly there is a natural vector space isomorphism
\[
H^* (\{y\}, i_y^* K_{\sigma,r})^\vee \cong H^{-*} (\{y\}, i_y^! K_{\sigma,r}^\vee) .
\]
The $\mh H\times \pi_0 (C_y)$-actions in 
part (a) become actions of $\mh H^{op} \cong \mh H (G,M,q\cE^\vee )$ and of $\pi_0 (C_y)$ 
upon taking vector space duals. We can reformulate the isomorphisms from part (a) as
\begin{equation}\label{eq:6.2}
\begin{array}{ccc}
H_* (\mc P_y^\sigma, \dot{q\cE})^\vee & \cong & H^{- *}(\mc P_y^\sigma, \dot{q\cE^\vee}) ,\\
H^* (\mc P_y^\sigma, \dot{q\cE})^\vee & \cong & H_{- *}(\mc P_y^\sigma, \dot{q\cE^\vee}) .
\end{array} 
\end{equation}
Using the explicit description of the actions given in \cite[\S 2.1]{SolSGHA} and in \cite{AMS2},
one checks readily that in \eqref{eq:6.2} we have isomorphisms of  
$\mh H (G,M,q\cE^\vee) \times \pi_0 (C_y)$-representations. \\
(c) This can be shown in the same way as parts (a) and (b).
\end{proof}

With Proposition \ref{prop:6.2} we can henceforth interpret the geometric standard 
$\mh H$-module $E_{y,\sigma,r,\rho}$ as 
\begin{equation}\label{eq:6.12}
\Hom_{\pi_0 (C_y)} \big( \rho, H^* (\{y\},i_y^! K_{N,\sigma,r}) \big) \cong 
\C_{\sigma,r} \underset{H_{C_y^\circ}^* (\{y\})}{\otimes} \Hom^*_{\mc D^b_{C_y} (\{y\})} 
\big( \rho, i_y^! K_{N,\sigma,r} \big) .
\end{equation}
With Theorem \ref{thm:8.3} we can reformulate \eqref{eq:6.12} as an isomorphism of 
$\mh H (G,M,q\cE,r)$-modules:
\[
E_{y,\sigma,r,\rho} \cong \C_\sigma \underset{H_{Z_G^\circ (\sigma)}^* (\{y\})}{\otimes} 
\Hom^*_{\mc D^b_{Z_G (\sigma,y)} (\{y\})} \big( \rho, i_y^! K_{N,\sigma,r} \big) .
\]

\section{Structure of the localized complexes $K_{\sigma,r}$ and $K_{N,\sigma,r}$}
\label{sec:structure}

With $K_{\sigma,r}$ and $K_{\sigma,r}^\vee$ we can give an alternative interpretation of the cuspidal 
quasi-supports involved in standard modules (see in particular Theorem \ref{thm:4.2}.c). 

\begin{prop}\label{prop:2.6}
Fix a nilpotent $y \in \mf g^{\sigma,r}$ and let $i_y : \{y\} \to \mf g^{\sigma,r}$ be the 
inclusion. For $\rho \in \Irr \big( \pi_0 (Z_G (y,\sigma_0)) \big) = \Irr ( \pi_0 (C_y) )$, 
the following are equivalent:
\begin{enumerate}[(i)]
\item the cuspidal quasi-support $q\Psi_{Z_G (\sigma_0)}(y,\rho)$, with respect to the
group $Z_G (\sigma_0)$, is $G$-conjugate to $(M,\cC_v^M,q\cE)$,
\item $\Hom_{\pi_0 (C_y)} \big( \rho, H^* (\{y\}, i_y^! K_{\sigma,r}) \big) \neq 0$,
\item $\Hom_{\pi_0 (C_y)} \big( H^* (\{y\},i_y^* K_{\sigma,r}), \rho \big) \neq 0$,
\item $\Hom_{\pi_0 (C_y)} \big( \rho, H^* (\{y\}, i_y^! K_{N,\sigma,r}) \big) \neq 0$,
\item $\Hom_{\pi_0 (C_y)} \big( H^* (\{y\},i_y^* K_{N,\sigma,r}), \rho \big) \neq 0$.
\end{enumerate}
\end{prop}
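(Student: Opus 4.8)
The plan is to prove the cycle of equivalences (i) $\Leftrightarrow$ (ii) $\Leftrightarrow$ (iii) $\Leftrightarrow$ (iv) $\Leftrightarrow$ (v) by leaning on Proposition \ref{prop:6.2}, which identifies $H^*(\{y\}, i_y^! K_{\sigma,r})$ with $E_{y,\sigma,r}$ as $\mh H(G,M,q\cE)\times\pi_0(C_y)$-representations, together with the known nonvanishing criterion for standard modules. First I would record that, by Proposition \ref{prop:6.2}(a) and (c), the four cohomology spaces in (ii)--(v) are intertwined: the $!$-versions for $\mf g^{\sigma,r}$ and $\mf g_N^{\sigma,r}$ are both isomorphic to $E_{y,\sigma,r}$, and the $*$-versions are both isomorphic to $\C_{\sigma,r}\otimes_{H^*_{C_y^\circ}(\{y\})} H^*_{C_y^\circ}(\mc P_y^\sigma,\dot{q\cE})$, all as $\pi_0(C_y)$-representations. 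Hence (ii) $\Leftrightarrow$ (iv) and (iii) $\Leftrightarrow$ (v) are immediate, and it remains to connect (i) with (ii) and with (iii).

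For (i) $\Leftrightarrow$ (ii): by Proposition \ref{prop:6.2}(a) the space in (ii) is $\Hom_{\pi_0(C_y)}(\rho, E_{y,\sigma,r}) = E_{y,\sigma,r,\rho}$, which is exactly the standard module. The nonvanishing of $E_{y,\sigma,r,\rho}$ was shown in \cite[Proposition 3.7 and \S 4]{AMS2} to be equivalent to $q\Psi_{Z_G(\sigma_0)}(y,\rho)$ being $G$-conjugate to $(M,\cC_v^M,q\cE)$, which is precisely condition (i); I would cite this directly, noting that it uses the identification $\pi_0(C_y)\cong\pi_0(Z_G(y,\sigma_0))$ from \eqref{eq:4.5}. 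For (ii) $\Leftrightarrow$ (iii): here I would use the duality \eqref{eq:2.13}, $DK_{\sigma,r} = K_{\sigma,r}^\vee[\dim_\R\mf g^{\sigma,r}]$, and the Verdier-duality isomorphisms from Proposition \ref{prop:6.2}(b). These give $H^*(\{y\}, i_y^! K_{\sigma,r})^\vee \cong H^{\dim_\R\mf g^{\sigma,r}-*}(\{y\}, i_y^* K_{\sigma,r}^\vee)$ as $\pi_0(C_y)$-representations, and analogously with the roles of $q\cE$ and $q\cE^\vee$ swapped. Since $\Hom_{\pi_0(C_y)}(\rho, V) \neq 0 \iff \Hom_{\pi_0(C_y)}(V^\vee, \rho^\vee) \neq 0$ for a finite-dimensional $\pi_0(C_y)$-representation $V$, and since the passage $q\cE\leadsto q\cE^\vee$ preserves cuspidality and sends the condition in (i) for $\rho$ to the corresponding condition for $\rho^\vee$ (the cuspidal quasi-support construction commutes with duality), the equivalence (ii) $\Leftrightarrow$ (iii) follows; one has to be a little careful to track whether the relevant statement is about $(y,\sigma,\rho)$ and $q\cE$ or about $(y,\sigma,\rho^\vee)$ and $q\cE^\vee$, but condition (i) is symmetric under this swap because $q\Psi_{Z_G(\sigma_0)}(y,\rho)^\vee = q\Psi_{Z_G(\sigma_0)}(y,\rho^\vee)$.

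The main obstacle I anticipate is bookkeeping rather than a conceptual difficulty: making sure that all the isomorphisms in Proposition \ref{prop:6.2} are genuinely $\pi_0(C_y)$-equivariant (not merely $\mh H$-equivariant) so that taking $\Hom_{\pi_0(C_y)}(\rho,-)$ or $\Hom_{\pi_0(C_y)}(-,\rho)$ is legitimate, and correctly matching up which of $\rho$, $\rho^\vee$ appears on which side after dualizing. A secondary point worth spelling out is why the nonvanishing criterion from \cite{AMS2}, which is stated under Condition \ref{cond:2.1}, transfers to the present setting; but since the whole of Section \ref{sec:standard} already assumes Condition \ref{cond:2.1} (and \eqref{eq:6.12} explains how it can ultimately be lifted), I would simply work under that hypothesis here, as the statement of Proposition \ref{prop:2.6} is placed before the lifting discussion. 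Once the equivariance is nailed down, the proof is a short chain of formal equivalences.
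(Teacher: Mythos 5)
Your proposal is correct and follows essentially the same route as the paper: (ii)$\Leftrightarrow$(iv) and (iii)$\Leftrightarrow$(v) come from Proposition \ref{prop:6.2}.(a,c), (i)$\Leftrightarrow$(ii) comes from identifying the space in (ii) with $E_{y,\sigma,r,\rho}$ and invoking the nonvanishing criterion of \cite[Proposition 3.7 and \S 4]{AMS2}, and (ii)$\Leftrightarrow$(iii) comes from the duality statement in Proposition \ref{prop:6.2}.b. Your extra bookkeeping (semisimplicity of $\pi_0 (C_y)$-representations, the swap $\rho \leftrightarrow \rho^\vee$, $q\cE \leftrightarrow q\cE^\vee$, and the compatibility of cuspidal quasi-supports with duality) merely makes explicit what the paper's terse reference to Proposition \ref{prop:6.2}.b leaves implicit.
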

\begin{proof}
Recall from Proposition \ref{prop:6.2}.a that 
\[
H^* (\{y\}, i_y^! K_{\sigma,r}) \cong E_{y,\sigma,r} .
\]
With that in mind, the equivalence of (i) and (ii) is shown in \cite[Proposition 3.7]{AMS2} when 
$G$ is connected. With \cite[\S 4]{AMS2} those arguments can be extended to disconnected $G$ and 
cuspidal \emph{quasi}-supports. The equivalence of (ii) and (iii) follows
from Proposition \ref{prop:6.2}.b. By Proposition \ref{prop:6.2}.(a,c)
\[
H^* (\{y\}, i_y^! K_{\sigma,r}) \cong H^* (\{y\}, i_y^! K_{N,\sigma,r}) \quad \text{and} \quad
H^* (\{y\}, i_y^* K_{\sigma,r}) \cong H^* (\{y\}, i_y^* K_{N,\sigma,r})  
\]
as $\mh H (G,M,q\cE) \times \pi_0 (C_y)$-representations. That proves the equivalence of (ii) with
(iv) and of (iii) with (v).
\end{proof}

Cuspidal quasi-supports were defined \cite[\S 5]{AMS1}, in relation with a Springer correspondence
for disconnected reductive groups. In our context, it is more convenient to use the property (ii)
or (iii) in Proposition \ref{prop:2.6}: for a given triple $(y,\sigma,\rho)$ that determines
$(M,\cC_v^M, q\cE)$ up to $G$-conjugacy. 

In the opposite direction, Proposition \ref{prop:2.6} almost determines the semisimple complex 
$K_{\sigma,r}$. To work this out, let $\mc O_y = \Ad (C) y \subset \mf g^{\sigma,r}$ be the 
$C$-orbit of $y$. Regarding $\rho$ as a $C_y$-equivariant sheaf on $\{y\}$ and invoking the 
equivalence of categories
\begin{equation}\label{eq:6.3}
\mr{ind}_{C_y}^C : \mc D^b_{C_y} (\{y\}) \isom \mc D^b_C (\mc O_y) ,
\end{equation}
we obtain a $C$-equivariant local system $\mr{ind}_{C_y}^C (\rho)$ on $\mc O_y$. We form the 
equivariant intersection cohomology complex $\IC_C \big( \mf g^{\sigma,r}, \ind_{C_y}^C (\rho) \big) 
\in \mc D^b_C (\mf g^{\sigma,r})$, which is supported on $\overline{\mc O_y}$. This is the usual 
intersection cohomology complex $\IC \big( \mf g^{\sigma,r}, \mr{ind}_{C_y}^C (\rho) \big)$, 
only now considered with its $C$-equivariant structure.

\begin{thm}\label{thm:2.7}
\enuma{
\item Fix $r \in \C^\times$. Every simple direct summand of $K_{\sigma,r}$ is isomorphic to 
$\IC_C \big( \mf g^{\sigma,r}, \mr{ind}_{C_y}^C (\rho) \big)$, for data $(y,\sigma,\rho)$ 
that fulfill the conditions in Proposition \ref{prop:2.6}. Conversely, every such equivariant 
intersection cohomology complex is a direct summand of $K_{\sigma,r}$ (with multiplicity $\geq 1$).
\item For arbitrary $r \in \C$, part (a) becomes valid when we replace all involved sheaves by 
their versions for $\mf g_N^{\sigma,r}$.
}
\end{thm}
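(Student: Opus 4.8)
The plan is to combine the semisimplicity of $K_{\sigma,r}$ with the classification of $C$-equivariant simple perverse sheaves on $\mf g^{\sigma,r}$, and then to match the summands that actually occur against the costalk criterion of Proposition \ref{prop:2.6}. I will treat part (a) with $r\neq 0$ first; then $\mf g^{\sigma,r}$ consists of nilpotent elements, so $C = Z_G(\sigma)\times\C^\times$ acts on it with finitely many orbits. Every $C$-equivariant simple perverse sheaf on $\mf g^{\sigma,r}$ is then of the form $\IC_C(\overline{\mc O_y},\mathcal L)$ for a $C$-orbit $\mc O_y$ and an irreducible $C$-equivariant local system $\mathcal L$ on it, and by the equivalence \eqref{eq:6.3} these $\mathcal L$ are precisely the $\mr{ind}_{C_y}^C(\rho)$ with $\rho\in\Irr(\pi_0(C_y))$. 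Since $K_{\sigma,r}$ is a direct sum of (degree shifts of) simple perverse sheaves, recalled above from \cite[\S 5.3]{Lus-Cusp2}, all of its simple summands have the asserted shape, and it remains to identify which pairs $(y,\rho)$ occur.

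For the direction ``summand $\Rightarrow$ (i)'' I would invoke the standard support and cosupport estimates for intersection cohomology. Fixing a nilpotent $y\in\mf g^{\sigma,r}$ and a $C$-orbit $\mc O_{y'}$, the costalk $i_y^!\IC_C(\overline{\mc O_{y'}},\mr{ind}_{C_{y'}}^C(\rho'))$ vanishes unless $y\in\overline{\mc O_{y'}}$, equals a shift of $\rho'$ concentrated in the single cohomological degree $\dim_{\mathbb C}\mc O_y$ when $\mc O_{y'}=\mc O_y$, and is concentrated in strictly larger degrees when $\mc O_y\subsetneq\overline{\mc O_{y'}}$. As a direct summand of a complex contributes a direct summand to each costalk, an occurrence of $\IC_C(\overline{\mc O_y},\mr{ind}_{C_y}^C(\rho))$ in $K_{\sigma,r}$ forces $\rho$ into $H^*(\{y\},i_y^! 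K_{\sigma,r})$ as a $\pi_0(C_y)$-subrepresentation, i.e. condition (ii) of Proposition \ref{prop:2.6}, hence (i). Conversely, if (i) holds then Proposition \ref{prop:2.6} together with Proposition \ref{prop:6.2}(a) gives $0\neq E_{y,\sigma,r,\rho}=\Hom_{\pi_0(C_y)}\big(\rho,H^*(\{y\},i_y^! K_{\sigma,r})\big)$, and I would conclude that $\IC_C(\overline{\mc O_y},\mr{ind}_{C_y}^C(\rho))$ occurs in $K_{\sigma,r}$ (with multiplicity $\geq 1$) by downward induction on $\dim\mc O_y$: restricting to the open $C$-stable subset of $\mf g^{\sigma,r}$ on which $\mc O_y$ is closed and using the estimates above to control degrees, the degree-$\dim_{\mathbb C}\mc O_y$ part of $i_y^! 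K_{\sigma,r}$ isolates exactly the summands supported on $\mc O_y$, so the occurrence of $\rho$ — with the inductive hypothesis already accounting for the larger orbits — must be visible among those. This is precisely the generalized Springer correspondence of \cite[\S 8--10]{Lus-Cusp2} and \cite[\S 3--4]{AMS2}; the disconnected case is reduced to the connected one by passing to $\pi_0(Z_G(\sigma))$-invariants, as in the proof of Proposition \ref{prop:2.3}.

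Part (b) follows by the same mechanism. For $r\neq 0$ one has $\mf g_N^{\sigma,r}=\mf g^{\sigma,r}$ and $K_{N,\sigma,r}=K_{\sigma,r}$, so it is part (a). For $r=0$ one repeats the three steps above verbatim for $(\mf g_N^{\sigma,r},K_{N,\sigma,r})$: this complex is semisimple by \cite[Lemma 2.8]{SolSGHA}, its simple summands are the $C$-equivariant simple perverse sheaves on $\mf g_N^{\sigma,r}$, and Proposition \ref{prop:2.6} supplies the equivalences (i)$\Leftrightarrow$(iv)$\Leftrightarrow$(v) while Proposition \ref{prop:6.2}(c) identifies the relevant costalks with standard modules. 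The step I expect to be the main obstacle is the converse in the second paragraph: showing that every $(y,\sigma,\rho)$ satisfying (i) genuinely yields a summand, not merely that the summands satisfy (i). That is exactly where the interplay between the closure order on $C$-orbits and cohomological degrees must be controlled; if one prefers not to rely on the perversity (semismallness) that makes the degree bookkeeping clean when $r\neq 0$, the alternative is a dimension count comparing $\C_{\sigma,r}\otimes_{H^*_C(\pt)}\End^*_{\mc D^b_C(\mf g^{\sigma,r})}(K_{\sigma,r})$, decomposed over its simple summands, with $\C_{\sigma,r}\otimes_{Z(\mh H)}\mh H$ via Theorem \ref{thm:2.5}.
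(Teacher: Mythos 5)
Your first half is fine: since $\mf g^{\sigma,r}$ (resp.\ $\mf g_N^{\sigma,r}$) is a finite union of $C$-orbits and $K_{\sigma,r}$ is semisimple, every simple summand is $\IC_C\big(\overline{\mc O_y}, \mr{ind}_{C_y}^C(\rho)\big)$ for some orbit and some $\rho$, and the costalk of such a summand at a point of its open orbit forces condition (ii) of Proposition \ref{prop:2.6}, hence (i). The genuine gap is in the converse, which is the substantive half of the statement. Condition (ii) only says that $\rho$ occurs \emph{somewhere} in $H^*(\{y\}, i_y^! K_{\sigma,r})$; that occurrence can be produced entirely by costalks of summands $\IC_C\big(\overline{\mc O_{y'}}, \mr{ind}_{C_{y'}}^C(\rho')\big)[d]$ with $\mc O_y \subsetneq \overline{\mc O_{y'}}$. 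Because the shifts $d$ occurring in the semisimple decomposition are a priori arbitrary, the cohomological degree of a costalk class does not tell you on which orbit closure the responsible summand is supported, so your downward induction does not ``isolate exactly the summands supported on $\mc O_y$''. To make it work you would need the costalks at $y$ of all larger IC summands together with their multiplicities and shifts -- precisely the Kazhdan--Lusztig-type data the theorem is meant to feed into Proposition \ref{prop:2.9}. The implication (occurrence in the (co)stalk $\Rightarrow$ actual direct summand with multiplicity $\geq 1$) is exactly the content of Lusztig's localized generalized Springer correspondence, \cite[Proposition 8.17]{Lus-Cusp2} for $r \neq 0$ and \cite[\S 9.5]{Lus-Cusp2} for part (b); the paper's proof quotes these results for connected $G$ rather than rederiving them, and your sketch (including the unexecuted ``dimension count'' fallback) does not replace them, in particular not at $r=0$ where you yourself note the perversity bookkeeping is less clean.

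Second, reducing the disconnected case ``by passing to $\pi_0(Z_G(\sigma))$-invariants, as in Proposition \ref{prop:2.3}'' does not do the required work. Taking invariants identifies endomorphism algebras, but it does not determine how $C^\circ$-equivariant simple summands assemble into $C$-equivariant ones, nor which enhancements $\rho \in \Irr(\pi_0(C_y))$ actually occur, nor why the surviving $\rho$ are exactly those with $q\Psi_{Z_G(\sigma_0)}(y,\rho) = [M,\cC_v^M,q\cE]_G$. This is where most of the paper's proof is spent: $q\cE$ is decomposed into $M$-conjugates of $\cE$ with $(q\cE)_v \cong \cE_v \rtimes \rho_M$, the complexes $K_{\sigma,r}$ for $G^\circ$, $G^\circ M$ and $G$ are compared, \cite[Lemma 4.4]{AMS2} matches $\rho^\circ$ with $\rho^\circ \rtimes \rho_M$ so that the cuspidal quasi-support condition is preserved under this Clifford-theoretic step, and Frobenius reciprocity applied to $\mr{ind}^{C_y}_{C_y \cap G^\circ M}$ yields the multiplicity $\geq 1$ for $G$. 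Without an argument of this kind your proposal establishes only the forward inclusion of part (a), not the converse, and hence not the theorem.
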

\begin{proof}
(a) From \cite[(95)]{AMS2} we see that, as $M^\circ$-equivariant local system on 
$\cC_v^M = \cC_v^{M^\circ}$, $q\cE$ is a direct sum of $M$-conjugates of $\mc E$ and 
$(q\cE)_v \cong \mc E_v \rtimes \rho_M$ for a suitable representation $\rho_M$. 
Hence $q\cE \in \mc D^b_{G^\circ}(\dot{\mf g})$ is a direct sum of $G$-conjugates of $\dot{\cE} \in 
\mc D^b_{G^\circ} (\dot{\mf g}^\circ)$. 

Then the diagram \eqref{eq:2.12} shows that, as an element of $\mc D^b_{Z_G^\circ (\sigma) \times 
\C^\times} (\mf g^{\sigma,r})$, $K_{\sigma,r}$ is a direct sum of $Z_G (\sigma)$-conjugates of
$K_{\sigma,r}^\circ$ -- the version of $K_{\sigma,r}$ for $(G^\circ, M^\circ, \cE)$. 
By \cite[\S 5.3]{Lus-Cusp2}, $K_{\sigma,r}^\circ$ is a semisimple complex of sheaves. Further 
\cite[Proposition 8.17]{Lus-Cusp2} (for which we need $r \neq 0$) and Proposition \ref{prop:2.6} 
entail that the simple direct summands of $K_{\sigma,r}^\circ$ are the 
$Z_G^\circ (\sigma) \times \C^\times$-equivariant intersection cohomology complexes 
\begin{equation}\label{eq:2.15}
\IC_{C^\circ}\big( \mf g^{\sigma,r}, \mr{ind}^{C^\circ}_{Z_{G^\circ} (\sigma,y)} (\rho^\circ) \big) 
\quad \text{with} \quad
\Hom_{\pi_0 (C_y)}(H^* (\{y\},i_y^* K_{\sigma,r}^\circ), \rho^\circ ) \neq 0 .
\end{equation}
More precisely, every such summand appears with a multiplicity $\geq 1$. Then $K_{\sigma,r}$ is a direct 
sum of terms 
\[
\IC_{C^\circ}\big( \mf g^{\sigma,r}, \Ad (g)^* \mr{ind}^{C^\circ}_{Z_{G^\circ} (\sigma,y)} (\rho^\circ) \big),
\] 
where $g \in Z_G (\sigma)$ and $(y,\rho)$ are as in \eqref{eq:2.15}. Again every such summand appears with
multiplicity $\geq 1$ in $K_{\sigma,r}$. 

On the other hand, we already knew that $K_{\sigma,r}$ is a $C$-equivariant semisimple complex of sheaves.
We deduce that $K_{\sigma,r}$ is a direct sum of terms 
$\IC_C \big( \mf g^{\sigma,r}, \mr{ind}_{C_y}^C (\rho' ) \big)$, where 
$\rho' \in \Irr (\pi_0 (C_y))$ contains some $\rho^\circ$ as before. That settles the 
geometric structure of $K_{\sigma,r}$, it remains to identify exactly which $\rho'$ occur.

The above works equally well with the group $G^\circ M$ instead of $G$. Let us assume that 
$\sigma_0, \sigma -r \sigma_v \in \mf t$, as we may by \cite[Proposition 1.4.c]{AMS3}. Then 
\cite[Lemma 4.4]{AMS2} says that every $\rho^\circ$ as in \eqref{eq:2.15} corresponds to a unique 
\[
\rho^\circ \rtimes \rho_M \in \Irr \big( \pi_0 (Z_{G^\circ M}(\sigma_0,y)) \big)
\] 
with $q\Psi_{Z_G^\circ (\sigma_0) M} (y, \rho^\circ \rtimes \rho_M)$ conjugate to $(M,\cC_v^M,q\cE)$ -- 
see also \eqref{eq:A.12}.
A direct comparison of the constructions of $K_{\sigma,r}^\circ$ and of $K_{\sigma,r}$ for $G^\circ M$
shows that the latter equals the direct sum of the complexes
\[
\IC_{Z_{G^\circ M}(\sigma) \times \C^\times} \big( \mf g^{\sigma,r}, 
\mr{ind}_{C_y}^C  (\rho^\circ \rtimes \rho_M) \big) ,
\]
with the same multiplicities as for $K_{\sigma,r}^\circ$. 

The step from $K_{\sigma,r}$ for $G^\circ M$ to $K_{\sigma,r}$ for $G$ is just induction, compare with
\eqref{eq:A.11}. This induction preserves the cuspidal quasi-supports (for $G$) from \cite[\S 5]{AMS1}, 
because those are based on what happens for objects coming from $G^\circ M$ (when this support comes
from $M$). We conclude that $K_{\sigma,r}$ (for $G$) is a direct sum of terms 
\[
\IC_C \big( \mf g^{\sigma,r}, 
\mr{ind}_{C_y}^C  \big( \mr{ind}^{C_y}_{C_y \cap G^\circ M} (\rho^\circ \rtimes \rho_M) \big) \big) ,
\]
with multiplicities coming from \eqref{eq:2.15}. 
In particular $K_{\sigma,r}$ is also a direct sum of (degree shifts of) simple perverse sheaves 
\begin{equation}\label{eq:2.16}
\IC_C \big( \mf g^{\sigma,r}, \mr{ind}_{C_y}^C  (\rho ) \big) \quad \text{where} \quad 
q\Psi_{Z_G (\sigma_0)}(y,\rho) = [M,\cC_v^M,q\cE ]_G. 
\end{equation}
By Frobenius reciprocity, applied to $\mr{ind}^{C_y}_{C_y \cap G^\circ M} (\rho^\circ \rtimes \rho_M)$, 
every term \eqref{eq:2.16} appears with a multiplicity $\geq 1$ in $K_{\sigma,r}$.\\
(b) This can be shown in the same way, if we the replace the crucial input from 
\cite[Proposition 8.17]{Lus-Cusp2} by \cite[\S 9.5]{Lus-Cusp2}.
\end{proof}

We note that by Theorem \ref{thm:2.7}.b, every simple perverse sheaf in $\mc D^b_{Z_G (\sigma) \times
\C^\times}(\mf g_N^{\sigma,r})$ occurs (maybe with a degree shift) as a summand of $K_{N,\sigma,r}$,
for a suitable cuspidal support $(M,\cC_v^M,q\cE)$. With the complexes $K_{\sigma,r}$ or 
$K_{N,\sigma,r}$, we can construct standard modules in yet another way.

\begin{lem}\label{lem:6.3} 
Assume that $(y,\rho)$ fulfills the equivalent conditions in Proposition \ref{prop:2.6} and let 
$j : \mc O_y \to \mf g^{\sigma,r}$ be the inclusion.
\enuma{
\item There are natural isomorphisms of $\mh H (G,M,q\cE^\vee)$-modules
\begin{align*}
& \Hom^*_{\mc D^b_C (\mf g^{\sigma,r})} \big( K_{\sigma,r}, j_* \mr{ind}_{C_y}^C  (\rho) \big) 
\cong \big( H^*_{C_y^\circ}(\{y\}) \otimes_\C H^* ( (i_y^* K_{\sigma,r})^\vee \otimes 
\rho ) \big)^{\pi_0 (C_y)} \\
& \C_{\sigma,r} \underset{H_C^* (\pt)}{\otimes} \Hom^*_{\mc D^b_C (\mf g^{\sigma,r})} 
\big( K_{\sigma,r}, j_* \mr{ind}_{C_y}^C  (\rho) \big) \cong E_{y,\sigma,r,\rho^\vee} .
\end{align*}
The former is an isomorphism of graded modules.
\item The isomorphisms from part (a) are also valid for $K_{N,\sigma,r}$ and the inclusion  
$j_N : \mc O_y \to \mf g_N^{\sigma,r}$.
}
\end{lem}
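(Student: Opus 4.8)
The strategy is to compute the Hom-space on the left-hand side using the semisimplicity of $K_{\sigma,r}$ (Theorem \ref{thm:2.7}) together with adjunction for the inclusion $j : \mc O_y \to \mf g^{\sigma,r}$, and then to match the result against the description of $E_{y,\sigma,r,\rho^\vee}$ provided by Proposition \ref{prop:6.2}. First I would apply the adjunction $(j^*, j_*)$ (or rather $(j_!, j^!)$ dualized) to rewrite
\[
\Hom^*_{\mc D^b_C (\mf g^{\sigma,r})} \big( K_{\sigma,r}, j_* \mr{ind}_{C_y}^C (\rho) \big)
\cong \Hom^*_{\mc D^b_C (\mc O_y)} \big( j^* K_{\sigma,r}, \mr{ind}_{C_y}^C (\rho) \big) ,
\]
and then transport along the equivalence $\mr{ind}_{C_y}^C : \mc D^b_{C_y}(\{y\}) \isom \mc D^b_C (\mc O_y)$ of \eqref{eq:6.3} to land on $\Hom^*_{\mc D^b_{C_y}(\{y\})}\big( i_y^* K_{\sigma,r}, \rho \big)$. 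Passing to the inertia group $C_y^\circ$ and taking $\pi_0(C_y)$-invariants (the component group acts on everything compatibly, as recalled in Paragraph \ref{par:construction} and in the proof of Proposition \ref{prop:6.2}), this becomes $\big( H^*_{C_y^\circ}(\{y\}) \otimes_\C \Hom^*_{\mathbb C}( i_y^* K_{\sigma,r}, \rho ) \big)^{\pi_0(C_y)}$. Finally, identifying $\Hom^*_\C(i_y^* K_{\sigma,r}, \rho)$ with $H^*\big( (i_y^* K_{\sigma,r})^\vee \otimes \rho \big)$ (applying the cohomology functor to a complex on a point) yields the first displayed isomorphism; the grading is preserved at each step since all the functors and adjunctions are graded, which gives the ``isomorphism of graded modules'' assertion.

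For the second displayed isomorphism I would combine the first with Proposition \ref{prop:6.2}. The point is that $\C_{\sigma,r} \otimes_{H^*_C(\pt)}(-)$ specializes $H^*_{C_y^\circ}(\{y\})$ at $(\sigma,r)$ (via the restriction $H^*_C(\pt) \to H^*_{C_y^\circ}(\{y\})$ used in \eqref{eq:5.12}), so after this base change the right-hand side of the first isomorphism collapses to $\big( \C_{\sigma,r} \otimes_{H^*_{C_y^\circ}(\{y\})} H^*_{C_y^\circ}(\{y\}) \otimes_\C H^*((i_y^* K_{\sigma,r})^\vee \otimes \rho) \big)^{\pi_0(C_y)} \cong \Hom_{\pi_0(C_y)}\big( H^*(i_y^* K_{\sigma,r}), \rho \big)^\vee$-type expression; more cleanly, it becomes $\Hom_{\pi_0(C_y)}\big( \rho^\vee, H^*(i_y^* K_{\sigma,r})^\vee \big)$. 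By Proposition \ref{prop:6.2}.b we have $H^*(i_y^* K_{\sigma,r})^\vee \cong E_{y,\sigma,r}$ as an $\mh H(G,M,q\cE^\vee) \times \pi_0(C_y)$-representation, and hence
\[
\C_{\sigma,r} \underset{H^*_C(\pt)}{\otimes} \Hom^*_{\mc D^b_C(\mf g^{\sigma,r})}\big( K_{\sigma,r}, j_* \mr{ind}_{C_y}^C(\rho) \big)
\cong \Hom_{\pi_0(C_y)}\big( \rho^\vee, E_{y,\sigma,r} \big) = E_{y,\sigma,r,\rho^\vee} .
\]
One must check that the $\mh H(G,M,q\cE^\vee)$-module structure coming from the right $\End^*$-action on $K_{\sigma,r}$ (via Theorem \ref{thm:1.2} and the opposite-algebra identification \eqref{eq:5.11}) matches the one on $E_{y,\sigma,r,\rho^\vee}$; this is the same bookkeeping as in Proposition \ref{prop:6.2}, using the explicit descriptions of the actions from \cite[\S 2.1]{SolSGHA} and \cite{AMS2}.

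Part (b) is then entirely formal: all the inputs — Theorem \ref{thm:2.7}.b, Proposition \ref{prop:6.2}.c, the equivalence \eqref{eq:6.3}, and the base-change identities for $i_y^{*}$ and $i_y^{!}$ — hold verbatim with $(\mf g_N^{\sigma,r}, K_{N,\sigma,r})$ in place of $(\mf g^{\sigma,r}, K_{\sigma,r})$, so the same chain of isomorphisms applies with $j_N : \mc O_y \to \mf g_N^{\sigma,r}$.

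\textbf{Main obstacle.} The delicate point is not the adjunction chain but the equivariant bookkeeping: one must be careful that the adjunction $\Hom_{\mc D^b_C(\mf g^{\sigma,r})}(K_{\sigma,r}, j_* -) \cong \Hom_{\mc D^b_C(\mc O_y)}(j^* K_{\sigma,r}, -)$ is compatible with the various module structures (the left $\End^*_{\mc D^b_C(\mf g^{\sigma,r})}(K_{\sigma,r})$-action turning into a \emph{right} action after $\Hom(K_{\sigma,r},-)$, hence the appearance of $q\cE^\vee$), and that specialization via $\C_{\sigma,r} \otimes_{H^*_C(\pt)}(-)$ interacts correctly with the $\pi_0(C_y)$-invariants and with the identification of $H^*_{C_y^\circ}(\{y\})$ as a quotient of $H^*_C(\pt)$. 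Tracking these structures, rather than any geometric input, is where the real work lies; the geometric content is already packaged in Theorem \ref{thm:2.7} and Proposition \ref{prop:6.2}.
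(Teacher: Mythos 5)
Your argument is correct and follows essentially the same route as the paper's proof: adjunction for $(j^*,j_*)$ plus the equivalence \eqref{eq:6.3} to reduce to $\Hom^*_{\mc D^b_{C_y}(\{y\})}(i_y^* K_{\sigma,r},\rho)$, the decomposition into $H^*_{C_y^\circ}(\{y\})$ tensored with non-equivariant cohomology followed by $\pi_0(C_y)$-invariants (the paper cites \cite[\S 1.10, \S 1.21]{Lus-Cusp2} for this), specialization at $(\sigma,r)$ commuting with invariants, and finally Proposition \ref{prop:6.2}.b to land on $E_{y,\sigma,r,\rho^\vee}$, with part (b) verbatim. The only cosmetic difference is that the paper phrases the dual as $D\, i_y^* K_{\sigma,r}$ and tensors with $\C_{\sigma,r}$ over $H^*_{C_y^\circ}(\{y\})$, which is exactly the specialization you describe.
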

\begin{proof}
(a) By adjunction and \eqref{eq:6.3} there are natural isomorphisms 
\begin{equation}\label{eq:6.6}
\begin{aligned}
\Hom^*_{\mc D^b_C (\mf g^{\sigma,r})} \big( K_{\sigma,r}, j_* \mr{ind}_{C_y}^C  (\rho) \big) & 
\cong \Hom^*_{\mc D^b_C (\mc O_y)} \big( j^* K_{\sigma,r}, \mr{ind}_{C_y}^C  (\rho) \big) \\
& \cong \Hom^*_{\mc D^b_{C_y} (\{y\})} (i_y^* K_{\sigma,r}, \rho) .
\end{aligned} \vspace{-2mm}
\end{equation}
With \cite[\S 1.10]{Lus-Cusp2} it can be rewritten as 
\begin{equation}\label{eq:6.4}
\begin{aligned}
\big( \Hom^*_{\mc D^b_{C_y^\circ} (\{y\})} (i_y^* K_{\sigma,r}, \rho) \big)^{\pi_0 (C_y)} 
\; \cong \; 
& \big( H^*_{C_y^\circ} (\{y\}, D i_y^* K_{\sigma,r} \otimes_\C \rho ) \big)^{\pi_0 (C_y)} \\
\; \cong \; & 
\big( H^*_{C_y^\circ} (\{y\}, D i_y^* K_{\sigma,r} ) \otimes_\C \rho \big)^{\pi_0 (C_y)} .
\end{aligned}
\end{equation}
By \cite[\S 1.21]{Lus-Cusp2}, \eqref{eq:6.4} is isomorphic with
\begin{equation}\label{eq:6.7}
\big( H^*_{C_y^\circ}(\{y\}) \otimes_\C H^* (\{y\}, 
D i_y^* K_{\sigma,r}) \otimes \rho \big)^{\pi_0 (C_y)} ,
\end{equation}
which gives the first isomorphism of the statement.

Since $C_y$ acts trivially on $\C_{\sigma,r}$, we can tensor the isomorphisms 
\eqref{eq:6.6}--\eqref{eq:6.7} with $\C_{\sigma,r}$ over $H^*_{C_y^\circ}(\{y\})$. That preserves 
the structure as left $\mh H (G,M,q\cE^\vee)$-module or right $\mh H (G,M,q\cE)$-module, but it
destroys the grading unless $(\sigma,r) = (0,0)$. It does not matter whether we tensor with 
$\C_{\sigma,r}$ before or after taking $\pi_0 (C_y)$-invariants. Thus it transforms \eqref{eq:6.7} into
\[
\big( H^* (\{y\}, D i_y^* K_{\sigma,r} ) \otimes \rho \big)^{\pi_0 (C_y)} \cong
\big( H^{-*} (\{y\}, i_y^* K_{\sigma,r} )^\vee \otimes \rho \big)^{\pi_0 (C_y)} .
\]
By Proposition \ref{prop:6.2} that is isomorphic with
\[
\big( H^{*} (\{y\}, i_y^! K_{\sigma,r}^\vee ) \otimes \rho 
\big)^{\pi_0 (C_y)} \cong ( E_{y,\sigma,r} \otimes \rho )^{\pi_0 (C_y)} .
\]
This can also be interpreted as 
$\Hom_{\pi_0 (C_y)}(\rho^\vee , E_{y,\sigma,r}) = E_{y,\sigma,r,\rho^\vee}$.\\
(b) The same argument as for part (a) works.
\end{proof}

\begin{rem}\label{rem:6.4}
For any $y \in \mf g_N^{\sigma,r}$
\begin{equation}\label{eq:8.12}
\C^\times y \text{ is contained in } \Ad (Z_G^\circ (\sigma) ) y ,
\end{equation}
because $y$ is part of a $\mf{sl}_2$-triple in $Z_{\mf g}(\sigma)$. In particular $Z_G (\sigma)$
and $C = Z_G (\sigma) \times \C^\times$ have the same orbits on $\mf g_N^{\sigma,r}$. Recall from 
\cite[Lemma 3.6.a]{AMS2} that $\pi_0 (C_y) \cong \pi_0 (Z_G (\sigma,y))$. 
For these reasons the results in Section \ref{sec:structure} 
remain valid when we replace $C$ by $Z_G (\sigma)$ everywhere. 
\end{rem}

\section{The Kazhdan--Lusztig conjecture}
\label{sec:KL}

The properties of $K_{N,\sigma,r}$ can be used to compute multiplicities between irreducible and 
standard modules. That enables us to investigate the Kazhdan--Lusztig conjecture \cite[\S 8]{Vog} for
graded Hecke algebras. For $\pi \in \Irr (\mh H (G,M,q\cE))$, write 
\[
\mu (\pi, E_{y,\sigma,r,\rho}) = \text{ multiplicity of } \pi \text{ in } E_{y,\sigma,r,\rho},
\] 
computed in the Grothendieck group of $\Mod_{\mr{fl}} (\mh H (G,M,q\cE))$. When $\mb r - r$ annihilates 
$\pi$, we can of course compute $\mu (\pi, E_{y,\sigma,r,\rho})$ just as well in the Grothendieck
group of $\Mod_{\mr{fl}} (\mh H (G,M,q\cE,r))$. In relation with the
analytic standard modules from Theorem \ref{thm:4.3} we record the obvious equality
\begin{equation}\label{eq:2.20}
\mu (\sgn^* \pi, \sgn^* E_{y,\sigma,r,\rho}) = \mu (\pi, E_{y,\sigma,r,\rho}) .
\end{equation}
Let $y' \in \mf g_N^{\sigma,r}$ and $\rho' \in \Irr \, \pi_0 (Z_G (\sigma,y'))$. Then 
$\mr{ind}_{C_{y'}}^C (\rho')$ 
is an irreducible $C$-equivariant local system on $\mc O_{y'} = \Ad (C) y'$. We define
\[
\mu \big( \mr{ind}_{C_y}^C  (\rho), \mr{ind}_{C_{y'}}^C (\rho') \big) = 
\text{multiplicity of } \mr{ind}_{C_y}^C (\rho) \text{ in } 
\mc H^* \big( \IC_C (\mf g_N^{\sigma,r}, \mr{ind}_{C_{y'}}^C (\rho')) \big) \big|_{\mc O_y}  .
\]
The notations on the right hand side mean that we build a $C$-equivariant intersection
cohomology complex from $\rho'$, we take its cohomology sheaves and we pull those back to $\mc O_y$.
With Remark \ref{rem:6.4}, we can also regard $\mr{ind}_{C_{y'}}^C (\rho')$ as the irreducible 
$Z_G (\sigma)$-equivariant local system $\mr{ind}_{Z_G (\sigma,y')}^{Z_G (\sigma)}(\rho')$ on 
$\mc O_{y'} = \Ad (Z_G (\sigma)) y'$. Then we can define \\
$\mu \big( \mr{ind}_{Z_G (\sigma,y)}^{Z_G (\sigma)} (\rho), 
\mr{ind}_{Z_G (\sigma,y')}^{Z_G (\sigma)} (\rho') \big)$ as
\[
\text{the multiplicity of } \mr{ind}_{Z_G (\sigma,y)}^{Z_G (\sigma)} (\rho) \text{ in }
\mc H^* \big( \IC_{Z_G (\sigma)} (\mf g_N^{\sigma,r}, 
\mr{ind}_{Z_G (\sigma,y')}^{Z_G (\sigma)} (\rho')) \big) \big|_{\mc O_y}  .
\]
Replacing $C$ by $Z_G (\sigma)$ does not really change the involved equivariant intersection complexes,
so we conclude that
\begin{equation}\label{eq:8.16}
\mu \big( \mr{ind}_{C_y}^C  (\rho), \mr{ind}_{C_{y'}}^C (\rho') \big) = 
\mu \big( \mr{ind}_{Z_G (\sigma,y)}^{Z_G (\sigma)} (\rho), 
\mr{ind}_{Z_G (\sigma,y)}^{Z_G (\sigma)} (\rho') \big) .
\end{equation}
Proposition \ref{prop:2.6} says that, if $(y',\rho')$ does not fulfill the conditions stated there: 
\[
\mu (\mr{ind}_{C_y}^C  (\rho), \mr{ind}_{C_{y'}}^C (\rho')) = 0. 
\]
That is not surprising, because in that case $(y',\rho')$ does not correspond to any 
$\mh H (G,M,q\cE)$-module.

\begin{prop}\label{prop:2.9}
In the above setup, assume that both $(y,\rho)$ and $(y',\rho')$ satisfy the equivalent conditions in 
Proposition \ref{prop:2.6}. 
\enuma{
\item $\mu (M_{y',\sigma,r,\rho'}, E_{y,\sigma,r,\rho}) = \mu \big( \mr{ind}_{C_y}^C (\rho), 
\mr{ind}_{C_{y'}}^C (\rho') \big)$.
\item The same holds if we replace the standard module $E_{y,\sigma,r,\rho}$ by the ``costandard
module" $\Hom_{\pi_0 (C_y)} \big( \rho, H^* ( \{y\}, i_y^* K_{N,\sigma,r}) \big)$. 
}
\end{prop}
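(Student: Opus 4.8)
The plan is to transport everything to the endomorphism algebra $A := \End^*_{\mc D^b_C(\mf g_N^{\sigma,r})}(K_{N,\sigma,r})$, where $C = Z_G(\sigma)\times\C^\times$, and then read off the multiplicities from the decomposition of $K_{N,\sigma,r}$ into intersection cohomology complexes. Since both parts only concern objects with central character $(\sigma,r)$, Theorem~\ref{thm:2.5}.c — or Theorem~\ref{thm:8.3}.c, if one prefers to have $\mb r$ specialised — identifies $\Modf{\sigma,r}(\mh H(G,M,q\cE))$ with $\Modf{\sigma,r}(A)$, while Remark~\ref{rem:6.4} and the discussion after Proposition~\ref{prop:6.2} let us drop Condition~\ref{cond:2.1} and replace $C$ by $Z_G(\sigma)$ when convenient. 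Under this equivalence $E_{y,\sigma,r,\rho}$ becomes the $A$-module $\Hom_{\pi_0(C_y)}(\rho, H^*(i_y^! K_{N,\sigma,r}))$ and the costandard module of part~(b) the analogous $A$-module built from $i_y^*$, the $A$-action coming from \eqref{eq:5.12}. The first substantial point is to identify the simple $A$-modules: I would check that $M_{y',\sigma,r,\rho'}$ corresponds to the simple module attached to the simple perverse summand $\IC_C(\mf g_N^{\sigma,r}, \mr{ind}_{C_{y'}}^C(\rho'))$ of $K_{N,\sigma,r}$. Because $\Hom^{<0}$ between simple perverse sheaves vanishes, the positive-degree part $A^{>0}\subset A$ is a two-sided ideal with $A/A^{>0}$ semisimple, and after completion at $(\sigma,r)$ its simple modules are exactly the multiplicity spaces of $K_{N,\sigma,r}$; since $\Modf{\sigma,r}(A)$ has precisely the simples matching the $M_{y',\sigma,r,\rho'}$ (Theorem~\ref{thm:4.2}.c), the two lists coincide, the matching being pinned down by comparing the unique irreducible quotient (resp.\ distinguished summand) of $E_{y',\sigma,r,\rho'}$ with the extreme-cohomological-degree part of $H^*(i_{y'}^! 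K_{N,\sigma,r})$, using that on the open stratum $\mc O_{y'}$ of its support $i_{y'}^!\IC_C(\mf g_N^{\sigma,r},\mr{ind}_{C_{y'}}^C(\rho'))$ reduces, up to a shift, to $\mr{ind}_{C_{y'}}^C(\rho')$.

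Next I would decompose $K_{N,\sigma,r}$ by Theorem~\ref{thm:2.7}.b, say $K_{N,\sigma,r}\cong\bigoplus_{(y'',\rho'')}\mc M_{y'',\rho''}\otimes_\C\IC_C(\mf g_N^{\sigma,r},\mr{ind}_{C_{y''}}^C(\rho''))$, the sum over the $G$-association classes satisfying Proposition~\ref{prop:2.6}. Applying $i_y^*$ (for~(b)) and taking cohomology yields, as $\pi_0(C_y)$-representations,
\[
H^*(i_y^* K_{N,\sigma,r}) \;\cong\; \bigoplus_{(y'',\rho'')} \mc M_{y'',\rho''} \otimes_\C H^*\big( i_y^* \IC_C(\mf g_N^{\sigma,r}, \mr{ind}_{C_{y''}}^C(\rho'')) \big) ,
\]
and likewise with $i_y^!$ for~(a). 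Taking $\Hom_{\pi_0(C_y)}(\rho,-)$, the costandard (resp.\ standard) module becomes, as a vector space, $\bigoplus_{(y'',\rho'')}\mc M_{y'',\rho''}\otimes_\C W_{y'',\rho''}$ with $\dim_\C W_{y'',\rho''}=\mu\big(\mr{ind}_{C_y}^C(\rho),\mr{ind}_{C_{y''}}^C(\rho'')\big)$: indeed $H^n(i_y^*\IC_C(\ldots))=\mc H^n(\IC_C(\ldots))_y$, the stalk at $y$ of $\mr{ind}_{C_y}^C(\rho)$ is the $\pi_0(C_y)$-representation $\rho$, so the multiplicity of $\rho$ in $\mc H^n(\IC_C(\ldots))_y$ equals the multiplicity of $\mr{ind}_{C_y}^C(\rho)$ in $\mc H^n(\IC_C(\ldots))|_{\mc O_y}$, and summing over $n$ gives $\mu$ by its very definition. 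For the $i_y^!$-version I would use Verdier duality \eqref{eq:2.13} together with Proposition~\ref{prop:6.2}.b to transpose the various duals and arrive at the same number.

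Finally I would promote this vector-space identity to one in $K_0(\Modf{\sigma,r}(A))$. Filtering $H^*(i_y^* K_{N,\sigma,r})$ by its radical ($A^{>0}$-adic) filtration, the associated graded is a semisimple $A/A^{>0}$-module, hence a sum of copies of the simples $\mc M_{y'',\rho''}=M_{y'',\sigma,r,\rho''}$; matching dimensions with the previous paragraph — equivalently using $\dim_\C\mc M_{y'',\rho''}=\dim_\C M_{y'',\sigma,r,\rho''}$, the ``geometric multiplicity'' reading of the $\IC$-decomposition — shows that each $M_{y'',\sigma,r,\rho''}$ occurs with multiplicity exactly $\mu\big(\mr{ind}_{C_y}^C(\rho),\mr{ind}_{C_{y''}}^C(\rho'')\big)$. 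Thus
\[
[E_{y,\sigma,r,\rho}] = \sum_{(y'',\rho'')} \mu\big( \mr{ind}_{C_y}^C(\rho), \mr{ind}_{C_{y''}}^C(\rho'') \big)\,[M_{y'',\sigma,r,\rho''}]
\]
in $K_0$, and the same with the costandard module; reading off the coefficient of $[M_{y',\sigma,r,\rho'}]$ gives (a) and (b), and by \eqref{eq:8.16} it may be rewritten with $Z_G(\sigma)$-equivariant local systems, which is the form in Theorem~\ref{thm:D}. I expect the main obstacle to be precisely this last passage (and the second half of the simple-module identification): one must be sure that the ``off-diagonal'' morphisms in $\End^*_{\mc D^b_C}(K_{N,\sigma,r})$ land in the radical, so that no unexpected cancellation occurs when passing to the Grothendieck group. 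This is the manifestation here of the parity/purity phenomenon underlying Kazhdan--Lusztig theory, and it is carried by the semisimplicity of $K_{N,\sigma,r}$ together with $A$ being concentrated in even degrees (Proposition~\ref{prop:2.4}.c).
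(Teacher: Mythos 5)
Your route --- localize with Theorem \ref{thm:2.5} (or Theorem \ref{thm:8.3}), realize the standard and costandard modules as $\Hom_{\pi_0(C_y)}\big(\rho, H^*(i_y^! K_{N,\sigma,r})\big)$ resp.\ $\Hom_{\pi_0(C_y)}\big(\rho, H^*(i_y^* K_{N,\sigma,r})\big)$ via Proposition \ref{prop:6.2}, decompose $K_{N,\sigma,r}$ into intersection cohomology complexes by Theorem \ref{thm:2.7}.b, and extract multiplicities from the semisimple quotient of the completed endomorphism algebra --- is exactly the mechanism of \cite[\S 10.4--10.8]{Lus-Cusp2}. The paper's proof consists of citing those sections and observing that Proposition \ref{prop:2.6} and Theorem \ref{thm:2.7} are precisely the inputs needed to run them for disconnected $G$, cuspidal quasi-supports, and (since one works on $\mf g_N^{\sigma,r}$ rather than $\mf g^{\sigma,r}$) also for $r=0$. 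So in substance you have unwound that citation rather than found a different argument.

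As written, though, your final step has a concrete gap, and the two points you flag as ``obstacles'' are not loose ends but the core of the cited argument. First, ``matching dimensions'' cannot give the termwise conclusion: from $\dim E = \sum_\phi \mu_\phi \dim \mc M_\phi$ and $\dim E = \sum_\phi m_\phi \dim \mc M_\phi$ one cannot deduce $m_\phi = \mu_\phi$. The multiplicity has to be extracted isotypically, e.g.\ by the degree-zero idempotent $e_\phi$ of $\End^*_{\mc D^b_C(\mf g_N^{\sigma,r})}(K_{N,\sigma,r})$ projecting onto the summand $\mc M_\phi \otimes \IC_\phi$: granting your description of the semisimple quotient, $e_\phi$ acts as the identity on the simple $\mc M_\phi$ and as zero on every other simple, so the multiplicity of $M_{y'',\sigma,r,\rho''}$ in $E$ equals $\dim (e_\phi E)/\dim \mc M_\phi = \dim \Hom_{\pi_0(C_y)}\big(\rho, H^*(i_y^!\IC_\phi)\big)$, which is the desired $\mu$ (and similarly with $i_y^*$ for part (b)); this is an easy repair, but it is not what you wrote. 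Second, the statements that the simples of $\Modf{\sigma,r}$ of the endomorphism algebra are exactly the multiplicity spaces, that the one attached to $\IC_C(\mf g_N^{\sigma,r},\mr{ind}_{C_{y'}}^C \rho')$ matches $M_{y',\sigma,r,\rho'}$ under Theorem \ref{thm:2.5}, and that after completion at $(\sigma,r)$ the positive-degree part lands in the radical (so nothing cancels in the Grothendieck group) are precisely what \cite[\S 10.4--10.8]{Lus-Cusp2} proves, via purity and evenness (cf.\ Proposition \ref{prop:2.4}.c) and the identification of the distinguished quotient (or summand, when $r=0$) of $E_{y',\sigma,r,\rho'}$; your sketch presupposes them. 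Finally, for part (a) the passage from the costalk count given by $i_y^!$ to the stalk-defined $\mu$ requires the fact that dualizing the inducing local system dualizes the local systems occurring in the stalk cohomology of the $\IC$ --- the analogue of the claim \eqref{eq:2.17} in the proof of Proposition \ref{prop:2.8} --- not merely a generic transposition of duals. So the strategy is right, but as it stands the proposal reduces the proposition to the content of the Lusztig reference (which is what the paper does explicitly) rather than proving it independently.
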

\begin{proof}
In the cases where $G$ and $M$ are connected and $r \neq 0$, this is proven in 
\cite[\S 10.4--10.8]{Lus-Cusp2}. With Proposition \ref{prop:2.6} and Theorem \ref{thm:2.7} available, 
these arguments from \cite{Lus-Cusp2} remain valid in our generality. We remark that, since we work 
with $\mf g_N^{\sigma,r}$ instead $\mf g^{\sigma,r}$, no extra problems arise when $r = 0$. 
\end{proof}

Proposition \ref{prop:2.9} and \eqref{eq:2.20} establish a version of the Kazhdan--Lusztig 
conjecture for (twisted) graded Hecke algebras of the form $\mh H (G,M,q\cE)$ or $\mh H (G,M,q\cE,r)$. 
In view of \eqref{eq:8.16}, we may also interpret the geometric multiplicities as computed with
$Z_G (\sigma)$-equivariant constructible sheaves. That fits well with Paragraph \ref{par:fixed}, 
in particular with Theorem \ref{thm:8.3}.

From here we would like to establish cases of the Kazhdan--Lusztig conjecture for $p$-adic groups
\cite[Conjecture 8.11]{Vog} (but without the sign involved over there, in our context such a sign would 
be superfluous). It remains to look for instances of a local Langlands correspondence which 
run via an algebra of the form $\mh H (G,M,q\cE) / (\mb r - r)$. 

We will now discuss in which cases this is known, and the setup needed to get there. Let $F$ be a
non-archimedean local field and let $\mc G$ be a connected reductive group defined over $F$.
Let $\mc M$ be a $F$-Levi subgroup of $\mc G$ and let $\tau \in \Irr (\mc M (F))$ be supercuspidal.
This already gives rise to the category $\Rep_{\mr{fl}}(\mc G (F))^{\tau}$ of finite length
smooth $\mc G(F)$-representations all whose irreducible subquotients have cuspidal support conjugate 
to $(\mc M (F),\tau)$. Assume now that $\tau$ is tempered, write
\[
X_\nr^+ (\mc M (F)) = \Hom (\mc M (F), \R_{>0})
\]
and let $\Rep_{\mr{fl}}(\mc G (F))^{\tau +}$ be the category of all finite length smooth
$\mc G (F)$-representations whose cuspidal support is contained in the $\mc G (F)$-orbit of
$\big( \mc M (F), \tau X_\nr^+ (\mc M (F)) \big)$. The set of irreducible objects of
$\Rep_{\mr{fl}}(\mc G (F))^\tau$ will be denoted $\Irr (\mc G (F))^\tau$, and likewise with $\tau +$.

To the data $(\mc G (F), \mc M (F), \tau)$ one can associate a twisted graded Hecke algebra 
$\mh H_\tau$, such that there is an equivalence of categories
\begin{equation}\label{eq:8.15}
\Rep_{\mr{fl}}(\mc G (F))^{\tau +} \cong \mh H_\tau - \Modf{\mf a} , 
\end{equation}
see \cite[Corollary 8.1]{SolEnd}. Here $\Modf{\mf a}$ means finite length right modules
with all $\mc O (\mf t)$-weights in $\mf a$, and one may identify
\[
\mf a = \mr{Lie}\big( X_\nr^+ (\mc M (F)) \big) = \Hom (\mc M (F), \R) .  
\]

\begin{thm}\label{thm:8.5}
In the above setting, suppose that $\mh H_\tau^{op}$ is of the form $\mh H (G,M,q\cE,r)$
for some $r \in \R$. Fix $\sigma_0 \in \mf a$ and write $\sigma = \sigma_0 - r \sigma_v$.
\enuma{
\item There is an equivalence of categories
\[
\Rep_{\mr{fl}}(\mc G (F))^{\tau +} \cong \Modf{\mf a} (\mh H (G,M,q\cE,r)) . 
\]
\item There is an equivalence of categories 
\[
\Rep_{\mr{fl}} (\mc G (F))^{\tau \otimes \exp (\sigma_0)} \cong 
\Modf{\sigma} \big( \End^*_{\mc D^b_{Z_G (\sigma)} (\mf g^{\sigma,-r}_N)} (K_{N,\sigma,-r}) \big) .
\]
\item The Kazhdan--Lusztig conjecture holds for 
$\Rep_{\mr{fl}} (\mc G (F))^{\tau \otimes \exp (\sigma_0)}$, in the form 
\[
\mu (\sgn^* M_{y',\sigma,-r,\rho'}, \sgn^* E_{y,\sigma,-r,\rho}) = \mu \big( \mr{ind}_{Z_G 
(\sigma,y)}^{Z_G (\sigma)} (\rho), \mr{ind}_{Z_G (\sigma,y')}^{Z_G (\sigma)} (\rho') \big) 
\]
where the right hand side is computed in $\mc D^b_{Z_G (\sigma)} (\mf g^{\sigma,-r}_N)$.
}
\end{thm}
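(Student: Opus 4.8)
The plan is to obtain (a)--(c) by assembling \eqref{eq:8.15} with the hypothesis on $\mh H_\tau^{op}$, Theorems \ref{thm:8.3}, \ref{thm:4.2} and \ref{thm:4.3}, Proposition \ref{prop:2.9}, and the identities \eqref{eq:2.20} and \eqref{eq:8.16}; no new geometric input is needed. For part (a) I would start from \eqref{eq:8.15}, $\Rep_{\mr{fl}}(\mc G(F))^{\tau+}\cong\mh H_\tau-\Modf{\mf a}$, view this as a category of modules over $\mh H_\tau^{op}$, and apply the hypothesis $\mh H_\tau^{op}\cong\mh H(G,M,q\cE,r)$ — using the anti-isomorphism \eqref{eq:5.11} if one has to trade left modules for right modules and $q\cE$ for $q\cE^\vee$, which is harmless since the conclusion is stated for an arbitrary cuspidal local system. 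Because the identification of \cite[(14)]{AMS2} is, up to $W_{q\cE}$, the identity on $\mc O(\mf t\oplus\C)$ and $\mf a=\Hom(\mc M(F),\R)$ is a $W_{q\cE}$-stable subset of $\mf t$ closed under $\xi\mapsto-\xi$, the weight condition defining $\mh H_\tau-\Modf{\mf a}$ translates exactly into the one defining $\Modf{\mf a}(\mh H(G,M,q\cE,r))$.

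The decisive step for (b) and (c) is to identify, under (a), the standard and irreducible representations of $\mc G(F)$ in $\Rep_{\mr{fl}}(\mc G(F))^{\tau+}$ with the \emph{analytic} standard modules $\sgn^*E_{y,\sigma,-r,\rho}$ and the irreducibles $\sgn^*M_{y,\sigma,-r,\rho}$ of $\mh H(G,M,q\cE,r)$. I would deduce this from the compatibility of \eqref{eq:8.15} with parabolic induction and temperedness established in \cite{SolEnd}, the Langlands classification on both sides (\cite[\S3.5]{SolHecke} and Theorem \ref{thm:4.3}), and the parametrization \eqref{eq:5.9}. Granting it, the central-character decomposition \eqref{eq:2.8} shows that $\Rep_{\mr{fl}}(\mc G(F))^{\tau\otimes\exp(\sigma)}$ — the subcategory on which the cuspidal support is pinned to $(\mc M(F),\tau\otimes\exp(\sigma))$ — corresponds to the block $\Modf{\sigma}(\mh H(G,M,q\cE,r))$ spanned by the modules $\sgn^*M_{y,\sigma,-r,\rho}$, which have central character $(\sigma,r)$ by the remark after \eqref{eq:5.9}. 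Since $\sgn(\mb r)=-\mb r$, the sign involution is an algebra isomorphism $\mh H(G,M,q\cE,r)\isom\mh H(G,M,q\cE,-r)$; pulling back along its inverse turns that block into $\Modf{\sigma}(\mh H(G,M,q\cE,-r))$, now spanned by the geometric modules $M_{y,\sigma,-r,\rho}$, and Theorem \ref{thm:8.3}.c applied with parameter $-r$ identifies it with $\Modf{\sigma}\big(\End^*_{\mc D^b_{Z_G(\sigma)}(\mf g_N^{\sigma,-r})}(K_{N,\sigma,-r})\big)$, which is (b).

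For (c): an equivalence of categories preserves composition multiplicities, so by the matching just described the Kazhdan--Lusztig multiplicity of an irreducible in a standard representation of $\mc G(F)$ equals $\mu(\sgn^*M_{y',\sigma,-r,\rho'},\sgn^*E_{y,\sigma,-r,\rho})$. Then \eqref{eq:2.20} removes the $\sgn^*$; Proposition \ref{prop:2.9}.a applied with parameter $-r$ rewrites the result as $\mu\big(\mr{ind}_{C_y}^C(\rho),\mr{ind}_{C_{y'}}^C(\rho')\big)$ for $C=Z_G(\sigma)\times\C^\times$ acting on $\mf g_N^{\sigma,-r}$; and \eqref{eq:8.16} together with Remark \ref{rem:6.4} turns this into the geometric multiplicity $\mu\big(\mr{ind}_{Z_G(\sigma,y)}^{Z_G(\sigma)}(\rho),\mr{ind}_{Z_G(\sigma,y')}^{Z_G(\sigma)}(\rho')\big)$ computed in $\mc D^b_{Z_G(\sigma)}(\mf g_N^{\sigma,-r})$, as claimed. (The costandard variant is the same argument with Proposition \ref{prop:2.9}.b in place of \ref{prop:2.9}.a.)

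I expect the main obstacle not to be any single estimate but the bookkeeping in the decisive step: checking that the abstract equivalence \eqref{eq:8.15} really sends the Langlands-classification standard representations to the \emph{analytic} standard modules $\sgn^*E_{y,\sigma,-r,\rho}$ rather than the geometric $E_{y,\sigma,r,\rho}$ — which is precisely why the sign involution enters, and which rests on the finer temperedness and induction properties of \cite{SolEnd} together with Theorem \ref{thm:4.3} — and, relatedly, making the signs of $\sigma$ and $r$ in (b) come out as stated, so that the geometric side is $(\mf g_N^{\sigma,-r},K_{N,\sigma,-r})$ with central character $\sigma$, in the normalization matching \cite[Conjecture 8.11]{Vog} with its signs suppressed.
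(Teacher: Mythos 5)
Your proposal is correct and takes essentially the same route as the paper, whose proof is exactly the assembly you describe: (a) is immediate from \eqref{eq:8.15}, (b) is obtained by applying $\sgn^*$ and Theorem \ref{thm:8.3}.c to (a), and (c) follows from \eqref{eq:2.20}, part (b), Proposition \ref{prop:2.9}.a and \eqref{eq:8.16}. The ``decisive step'' you single out --- matching the Langlands-classification standard representations of $\mc G (F)$ with the analytic standard modules $\sgn^* E_{y,\sigma,-r,\rho}$ --- is not actually needed for the categorical statements (a)--(c) as formulated here and is not part of the paper's proof; it is exactly the content of the extra assumptions \eqref{eq:8.13}--\eqref{eq:8.14} imposed later for Theorem \ref{thm:8.7}.
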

\begin{proof}
(a) This is an obvious consequence of \eqref{eq:8.15}.\\
(b) Apply $\sgn^*$ and Theorem \ref{thm:8.3}.c to part (a).\\
(c) This follows from \eqref{eq:2.20}, part (b), Proposition \ref{prop:2.9}.a and \eqref{eq:8.16}.
\end{proof}

We note that by \cite{SolEnd,SolParam} the $k$-parameters of the algebras $\mh H_\tau$ are very
often (conjecturally always) of the required kind. The analysis of the 2-cocycles of the 
group $W_{q\cE}$ for $\mh H_\tau$ may be difficult sometimes, but fortunately these 2-cocycles
are trivial in most cases. Therefore the assumption of Theorem \ref{thm:8.5} is fulfilled for
large classes of groups $\mc G$ and representations, and we expect that it holds always.\\

Next we suppose that a local Langlands correspondence is known for sufficiently large classes of
representations of $\mc M (F)$ and of $\mc G (F)$ that is, for some supercuspidal 
$\mc M (F)$-representations and for all the resulting Bernstein components of $\Irr (\mc G (F))$.
Let $\mc G^\vee$ and $\mc M^\vee$ be the complex dual groups of $\mc G$ and $\mc M$. 
Let $(\phi,\rho)$ be the enhanced 
L-parameter of $\tau$, so $\phi$ takes values in $\mc M^\vee \rtimes \mb W_F$. The group 
$X_\nr^+ (\mc M (F))$ embeds naturally in $Z(\mc M^\vee)$, and the latter acts on the set of
Langlands parameters for $\mc M (F)$ by adjusting the image of a Frobenius element.

To $(\mc G^\vee \rtimes \mb W_F, \mc M^\vee \rtimes \mb W_F, \phi,\rho)$ one can associate a 
triple $(G,M,q\cE)$ as throughout this paper \cite[\S 3.1]{AMS3}, and a twisted graded Hecke algebra 
$\mh H_{\phi,\rho}$ of the form $\mh H (G,M,q\cE)$. The involved group $G$ is called 
$G_{\phi_b} \times X_\nr ( {}^L \mc G )$ in \cite[(3.5)]{AMS3}, it is a finite cover of 
$Z_{\mc G^\vee}(\phi (\mb W_F))$. An important property of this algebra is:

\begin{thm}\label{thm:8.6} 
Fix $r \in \R$.
\enuma{
\item There exists a canonical bijection between $\Irr_{\mf a}(\mh H_{\phi,\rho} / (\mb r - r))$ 
and the set of enhanced L-parameters for $\mc G (F)$ (or an inner twist thereof) whose cuspidal 
support is $\mc G^\vee$-conjugate to an element of $(\mc M^\vee, X_\nr^+ (\mc M (F)) \phi,\rho)$. 
\item The sets in part (a) are canonically in bijection with 
$\Irr_{\mf a}(\mh H_{\phi,\rho^\vee} / (\mb r - r))$, via taking contragredients of enhancements
of L-parameters.
}
\end{thm}
\begin{proof}
(a) This is \cite[Theorem 3.8]{AMS3}. \\
(b) From Proposition \ref{prop:2.6} is clear that the cuspidal quasi-support map 
$q\Psi_{Z_G (\sigma_0)}$ commutes with the operation of taking the contragredient of 
involved local system/representation. This is the same cuspidal quasi-support map as in
\cite[\S 5]{AMS1}, which forms the core of the construction of the cuspidal support map for
enhanced L-parameters in \cite[\S 7]{AMS1}. Hence that map commutes with taking contragredients
of enhancements of L-parameters, that is, with operation $(\phi',\rho') \mapsto (\phi', \rho'^\vee)$.

Consequently the set of enhanced L-parameters, for inner twists of $\mc G (F)$, whose cuspidal 
support is $\mc G^\vee$-conjugate to an element of $(\mc M^\vee, X_\nr^+ (\mc M (F)) \phi,\rho)$ 
is canonically in bijection with the analogous set involving $\rho^\vee$ instead of $\rho$.
\end{proof}

Assume that we have an algebra isomorphism
\[
\mh H_\tau \cong \mh H_{\phi,\rho} / (\mb r - \log (q_F) / 2) \quad \text{or} \quad
\mh H_\tau^{op} \cong \mh H_{\phi,\rho} / (\mb r - \log (q_F) / 2) .
\]
By \eqref{eq:5.1} there are isomorphisms
\[
\mh H_{\phi,\rho}^{op} \cong \mh H (G,M,q\cE)^{op} \cong \mh H (G,M,q\cE^\vee) \cong 
\mh H_{\phi,\rho^\vee} ,
\]
so \eqref{eq:8.13} boils down to
\begin{equation}\label{eq:8.13}
\mh H_\tau^{op} \cong \mh H_{\phi,\rho / \rho^\vee} / (\mb r - \log (q_F) / 2) ,
\end{equation}
where $\rho / \rho^\vee$ means that we have to pick one of the two.
Last but not least, we assume that the induced bijections
\begin{equation}\label{eq:8.14}
\Irr (\mc G (F))^{\tau +} \longleftrightarrow \Irr_{\mf a}(\mh H_\tau^{op}) \longleftrightarrow
\Irr_{\mf a} \big( \mh H_{\phi,\rho / \rho^\vee} / (\mb r - \log (q_F) / 2) \big)
\end{equation}
and Theorem \ref{thm:8.6} realize a local Langlands correspondence for $\Irr (\mc G (F))^{\tau +}$. 
This involves the para\-metri\-zation of irreducible and analytic standard modules from Theorem 
\ref{thm:4.3} and the translation to Langlands parameters in \cite[Theorem 3.8]{AMS3}.

In this setting, for $r = \log (q_F)/ 2$:
\begin{equation}\label{eq:8.17}
\mf g_N^{\sigma,-r} = \{ y \in \mf g_N : [\sigma,y] = -\log (q_F) y \} =
\{ y \in \mf g_N : \Ad (\exp \sigma) y = q_F^{-1} y \} .
\end{equation}
Here $(\exp \sigma,y)$ defines an unramified L-parameter $\phi : \mb W_F \rtimes \C \to G$, with
$\exp (\sigma)$ the image of a geometric Frobenius element $\Fr \in \mb W_F$. Via the construction of 
$G$ mentioned before Theorem \ref{thm:8.6} that gives rise to a Langlands parameter for $\mc G (F)$,
namely $\phi$ with the image of $\Fr$ adjusted by $\exp (\sigma)$.

We note that the above is based on the construction of $\mc G$ as inner twist of a quasi-split
$F$-group. Alternatively, one may work with $\mc G$ as rigid inner twist of a quasi-split
group \cite{Kal}. That requires some minor adjustments of the setup, which are discussed
in \cite[\S 7]{SolRamif}. In particular the above group $G$ will then become the centralizer of
$\phi (\mb W_F)$ in the complex dual group of $\mc G / Z(\mc G_\der)$.

\begin{thm}\label{thm:8.7}
We fix $r = \log (q_F)/2$.
\enuma{
\item Under the above assumptions, in particular \eqref{eq:8.13}--\eqref{eq:8.14}, Theorem 
\ref{thm:8.5} holds for $\Rep_{\mr{fl}}(\mc G (F))^{\tau +}$, where now $\mf g_N^{\sigma,-r}$ is 
a variety of Langlands parameters associated to $\Irr (\mc G (F))^{\tau \otimes \exp (\sigma)}$.
Thus the Kazhdan--Lusztig conjecture from \cite[Conjecture 8.11]{Vog} holds for 
irreducible and standard representations in $\Rep_{\mr{fl}} (\mc G (F))^{\tau +}$.
\item Part (a) holds unconditionally in the following cases:
\begin{itemize}
\item inner forms of general linear groups,
\item inner forms of special linear groups,
\item principal series representations of quasi-split groups,
\item unipotent representations (of arbitrary reductive groups over $F$),
\item classical $F$-groups -- namely symplectic groups, (special) orthogonal groups,
unitary groups and general (s)pin groups. Such a group need not be $F$-split, we only
require that it is a pure inner form of a quasi-split group.
\end{itemize}
}
\end{thm}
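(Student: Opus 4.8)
The plan is to reduce Theorem \ref{thm:8.7} to Theorem \ref{thm:8.5} by verifying, in each listed case, the two hypotheses on which Theorem \ref{thm:8.5} rests: first, that the Hecke algebra $\mh H_\tau^{op}$ attached to the Bernstein block is of the form $\mh H(G,M,q\cE,r)$ with $r=\log(q_F)/2$ (i.e. that the $k$-parameters are geometric and the relevant $2$-cocycle on $W_{q\cE}$ is trivial), and second, that the isomorphism \eqref{eq:8.13} exists and is compatible with the local Langlands correspondence via \eqref{eq:8.14}, so that the variety $\mf g_N^{\sigma,-r}$ of \eqref{eq:8.17} really is a variety of Langlands parameters for $\Irr(\mc G(F))^{\tau\otimes\exp(\sigma)}$. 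Granting these two inputs, part (a) is immediate: Theorem \ref{thm:8.5}(a)--(c) applies verbatim, and the identification of $\mf g_N^{\sigma,-r}$ with Langlands parameters is exactly the discussion preceding the theorem, using the construction of $G$ from \cite[\S 3.1]{AMS3} as a finite cover of $Z_{\mc G^\vee}(\phi(\mb W_F))$ and Theorem \ref{thm:8.6} to match the parametrizations on the two sides.

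First I would handle inner forms of $GL_n$ and $SL_n$. For $GL_n(F)$ and its inner forms the Bernstein blocks are described by Bushnell--Kutzko / S\'echerre--Stevens types, and the associated Hecke algebras are (extended) affine Hecke algebras of type $A$ with equal parameters; passing to the graded version one gets $\mh H(G,M,q\cE,r)$ with the required $r$ and trivial cocycle. The local Langlands correspondence of Harris--Taylor / Henniart (and its inner-form version via Jacquet--Langlands) together with the ABPS-type description of Bernstein components yields \eqref{eq:8.13} and its compatibility. For inner forms of $SL_n$ one reduces to $GL_n$ by restriction, tracking the $R$-group; here the only subtlety is the possible nontriviality of the $2$-cocycle, but for $SL_n$-type the cocycles are known to be trivial (or split by a suitable choice), so the hypothesis of Theorem \ref{thm:8.5} is met. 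Next, for principal series of split groups one invokes the work on Hecke algebras of Bernstein blocks in the principal series (Roche, and the geometric description in terms of a graded Hecke algebra attached to the dual side); the parameters are again geometric, the cocycle is trivial by Roche's analysis, and the LLC for the principal series is known (via the Kazhdan--Lusztig / Reeder description), giving \eqref{eq:8.13}. Finally, for unipotent representations of an arbitrary connected reductive $p$-adic group one uses Lusztig's classification of unipotent representations and Hecke algebras, in which the relevant algebras are precisely (unequal-parameter, but geometric) graded Hecke algebras of the form $\mh H(G,M,q\cE,r)$; Solleveld's work identifies the dual-side geometry, the cocycles are trivial, and the LLC for unipotent representations (Lusztig, Feng--Opdam--Solleveld, \cite{AMS3}) supplies \eqref{eq:8.13} and \eqref{eq:8.14}.

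The main obstacle is not any single deep new argument but the bookkeeping in the last two items: one must check that the \emph{geometric} standard and irreducible modules parametrized in Theorem \ref{thm:4.3} correspond, under the chosen isomorphism $\mh H_\tau^{op}\cong\mh H_{\phi,\rho}/(\mb r-r)$, to the standard and irreducible \emph{representations} of $\mc G(F)$ in the sense of the Langlands classification --- i.e. that the dictionary respects temperedness and the Langlands quotient construction. This is where the sign issue flagged in the introduction matters: one has to confirm that omitting the signs of \cite[Conjecture 8.11]{Vog} is exactly what makes the $p$-adic statement come out right, which follows from Proposition \ref{prop:5.1} and the temperedness statements of Theorem \ref{thm:4.3}, but must be spelled out in each family. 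For the general linear and special linear cases and for the split principal series this compatibility is already in the literature (ABPS, Roche); for unipotent representations it is in \cite{AMS3} combined with the present Theorems \ref{thm:4.2}, \ref{thm:4.3} and Proposition \ref{prop:2.9}. Assembling these references, and noting that in every case the hypotheses of Theorem \ref{thm:8.5} hold, completes the proof of part (b) and hence of the theorem.
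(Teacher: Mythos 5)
Your proposal follows essentially the same route as the paper: part (a) is indeed just a restatement of the setup preceding the theorem (Theorem \ref{thm:8.5} plus the identification of $\mf g_N^{\sigma,-r}$ with unramified Langlands parameters via \eqref{eq:8.17}, with the signs of \cite[Conjecture 8.11]{Vog} omitted), and part (b) is a case-by-case verification of the assumptions \eqref{eq:8.13}--\eqref{eq:8.14}, reduced to known results on affine Hecke algebras for the four families (ABPS and \cite[\S 5]{AMS3} for inner forms of $GL_n$ and $SL_n$, Roche/ABPS for the principal series, Lusztig and Solleveld for unipotent representations), exactly as in the paper.

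One caveat: for principal series of split groups you rest the verification on Roche's theory of types, but Roche's analysis carries restrictions on the residual characteristic of $F$, so your argument alone would only give part (b) for that family under those restrictions, not unconditionally. The paper addresses precisely this point by replacing the type-theoretic Hecke algebras with those of \cite[\S 10.2]{SolEnd}, whose torus and Weyl-like group have the right shape by construction, whose $q$-parameters are computed in \cite[\S 4.1]{SolParam}, and whose $2$-cocycles are trivial because canonical intertwining operators exist at the cuspidal level for $p$-adic tori. If you add that substitution (or an equivalent argument removing the constraints on $p$), your proof matches the paper's in full.
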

\begin{proof}
(a) When we assume \eqref{eq:8.13} with $\rho$, this is just a restatement of the above, taking 
into account that we omit the signs from \cite[Conjecture 8.11]{Vog}. When instead we assume
\eqref{eq:8.13} with $\rho^\vee$, we only have to add that the right hand side of Theorem
\ref{thm:8.5}.c does not change if replace both $\rho$ and $\rho'$ by their contragredients.\\
(b) We need to check that the setup involving \eqref{eq:8.13} and \eqref{eq:8.14} is valid in 
the mentioned cases. It suffices to check that for an analogous setup with affine Hecke algebras, 
because that can always be reduced to graded Hecke algebras with \cite[\S 2]{AMS3}.

For the inner forms of general/special linear groups, that was done in \cite{ABPSSLn} and \cite[\S 
5]{AMS3}. For unipotent representations we refer to \cite{Lus-Uni,Lus-Uni2,SolLLCunip,SolRamif}. 
For classical $F$-groups we use the LLC from \cite{MoRe}, the Hecke algebras for Bernstein
components from \cite{Hei} and the Hecke algebras for Langlands parameters as well as the
comparison results from \cite{AMS4}.

The required properties of affine Hecke algebras for principal series representations of split 
groups were established in \cite{ABPSprin,Roc}. This was generalized to quasi-split groups in
\cite{SolQS}.
\end{proof}

\appendix
\renewcommand{\theequation}{\Alph{section}.\arabic{equation}}

\section{Localization in equivariant cohomology}
\label{sec:B}

The fundamental localization theorem in equivariant (co)homology is \cite[Proposition 4.4]{Lus-Cusp2}.
It is analogous to theorems in equivariant K-theory \cite[\S 4]{Seg}, \cite[\S 5.10]{ChGi} and in
equivariant K-homology \cite[1.3.k]{KaLu}. In \cite{Lus-Cusp2} it is proven for equivariant local
systems $\mc L$ on varieties $X$ such that 
\begin{equation}\label{eq:B.1}
H_c^{\mr{odd}} (X,\mc L^\vee) = 0 .
\end{equation}
During our investigations it transpired that the condition \eqref{eq:B.1} is not always satisfied by
$(\ddot{\mf g}, \ddot{\mc L})$ in the setting of \cite[\S 8.12]{Lus-Cusp2} - on which important parts
of \cite{Lus-Cusp2} and other papers rely.
Fortunately, this can be repaired by relaxing the conditions of \cite[Proposition 4.4]{Lus-Cusp2},
as professor Lusztig kindly explained us. 

\begin{prop}\label{prop:B.1}
Let $G$ be a connected reductive complex group acting on an affine variety $X$. Let $M$ be a Levi
subgroup of $G$ (i.e. the centralizer of a semisimple element of $\mf g$). Then the natural map
\[
H^*_M (\pt) \underset{H^*_G (\pt)}{\otimes} H_*^G (X,\mc L) \longrightarrow H_*^M (X,\mc L)
\]
is an isomorphism.
\end{prop}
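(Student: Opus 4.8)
The plan is to compute both sides via Borel's finite‑dimensional approximation and to observe that, precisely because $M$ is a Levi, the bundle relating the $M$‑ and $G$‑equivariant approximations enjoys the Leray--Hirsch property over its base, uniformly in $X$; this is what allows one to dispense with the parity hypothesis \eqref{eq:B.1} of \cite[Proposition 4.4]{Lus-Cusp2}. Concretely, following \cite[\S1.8--1.11]{Lus-Cusp1}, I would fix a $G$‑stable open subset $\Gamma$ of a $G$‑module on which $G$ (hence $M$) acts freely with quotient a variety, and identify $H_*^G(X,\mc L)$ and $H_*^M(X,\mc L)$, up to the standard normalizing shifts, with the Borel--Moore homologies of $(X\times\Gamma)/G$ and $(X\times\Gamma)/M$ with coefficients in the local systems induced by $\mc L$; as $X$ is affine these stabilize in each degree as $\Gamma$ grows, and one passes to the limit at the end. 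The projection $\pi\colon(X\times\Gamma)/M\to(X\times\Gamma)/G$ is a locally trivial fibration with fibre $G/M$, pulled back along the classifying map $(X\times\Gamma)/G\to BG$ from $BM\to BG$; since $\mc L$ is pulled back from $X$ it is constant along the fibres of $\pi$, so only the base carries the twist.

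The key input is that $H^*(BM)\cong\mc O(\mf s)^{W_M}$ is a \emph{free} graded module over $H^*(BG)\cong\mc O(\mf s)^{W_G}$, where $S$ is a maximal torus of $M$ (hence of $G$), $\mf s=\mr{Lie}(S)$, and $W_M\subseteq W_G$ are the Weyl groups. Indeed, $M$ being a Levi means that $W_M$ is generated by the reflections it contains, so by Chevalley's theorem $\mc O(\mf s)$ is finite free over each of the two invariant rings, and the averaging operator $|W_M|^{-1}\sum_{w\in W_M}w$ — which is $\mc O(\mf s)^{W_G}$‑linear — splits $\mc O(\mf s)^{W_M}$ off $\mc O(\mf s)$ as an $\mc O(\mf s)^{W_G}$‑module; a finitely generated graded projective module over a graded polynomial ring is free. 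Comparing Hilbert series, using that $G/M$ is an affine bundle over the partial flag variety $G/P$ so that $H^*(G/M;\Q)\cong H^*(G/P;\Q)$ is concentrated in even degrees with total dimension $[W_G:W_M]$, one gets $P_{BM}(t)=P_{BG}(t)\cdot P_{G/M}(t)$; hence the Serre spectral sequence of $G/M\to BM\to BG$ degenerates at $E_2$ and the restriction map $H^*(BM)\to H^*(G/M)$ is onto (this is just the classical Leray--Hirsch property of flag bundles). Lifting a $\Q$‑basis of $H^*(G/M)$ to classes in $H^*(BM)$ and pulling these back to $(X\times\Gamma)/M$ along $(X\times\Gamma)/M\to BM$ produces cohomology classes restricting to a basis on every fibre of $\pi$, so Leray--Hirsch applies to $\pi$ itself (with the twist confined to the base, and using the compact model $G/P$ of the fibre for the Borel--Moore bookkeeping). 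Passing to the limit over $\Gamma$ yields an $H^*_G(\pt)$‑linear isomorphism $H^*_M(\pt)\otimes_{H^*_G(\pt)}H_*^G(X,\mc L)\isom H_*^M(X,\mc L)$.

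It then remains to check that this isomorphism is the natural map of the statement, which is bookkeeping: unwinding the constructions, the $H^*_M(\pt)$‑module structure on $H_*^M(X,\mc L)$ is cap product with the pulled‑back classes from $H^*(BM)$, the forgetful map $H_*^G(X,\mc L)\to H_*^M(X,\mc L)$ is the Gysin/pullback map $\pi^!$, and Leray--Hirsch literally asserts that $\xi\otimes c\mapsto (c_M^*\xi)\smallfrown\pi^!(c)$ is an isomorphism — that is exactly $\xi\otimes c\mapsto\xi\cdot\mr{res}(c)$. I expect the main obstacle to be the middle paragraph: extracting degeneration of the relevant Serre spectral sequence from the Levi‑ness of $M$ (equivalently, the Leray--Hirsch property of the $G/M$‑bundle), since this is precisely what takes over the role of the discarded condition \eqref{eq:B.1}. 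A lesser point needing care is the correct degree shift in Leray--Hirsch for equivariant Borel--Moore homology with the non‑compact fibre $G/M$; this is absorbed by Lusztig's normalization once one replaces $G/M$ by the homotopy‑equivalent compact $G/P$ at the level of fibre cohomology, or works with compactly supported cohomology throughout.
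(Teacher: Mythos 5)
Your argument is correct, but it follows a genuinely different route from the paper. The paper disposes of Proposition \ref{prop:B.1} by translating the equivariant K-homology argument of Kazhdan--Lusztig \cite[1.8.(a)]{KaLu} (induction along $G\times_M X\cong G/M\times X$ plus their K\"unneth formula \cite[1.3.(n3), \S 1.5--1.6]{KaLu}) into equivariant cohomology, observing that the only hypothesis used there -- simple connectedness, needed in K-theory to make $R(T)$ free over $R(G)$ -- becomes vacuous in cohomology, where $H^*_T(\pt)\cong\mc O(\mf t)$ is free over $H^*_G(\pt)\cong\mc O(\mf t)^{W(G,T)}$ for every connected reductive $G$. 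You instead give a self-contained topological proof on the Borel approximations: the $G/M$-bundle $(X\times\Gamma)/M\to(X\times\Gamma)/G$ pulled back from $BM\to BG$, freeness of $H^*(BM)$ over $H^*(BG)$ (your averaging/Chevalley argument is fine and is exactly the point where cohomology is better than K-theory), degeneration of the Serre sequence of $G/M\to BM\to BG$ by Hilbert-series comparison with $H^*(G/P)$, and then Leray--Hirsch for Borel--Moore homology with the twist confined to the base. Both proofs ultimately rest on the same freeness statement; what the paper's citation buys is brevity and the already-verified module-structure bookkeeping, while yours makes the geometric mechanism explicit and shows directly why no parity hypothesis like \eqref{eq:B.1} is needed (evenness of $H^*(G/P)$ and freeness replace it). Two small points you should nail down if you write this up: the classes pulled back from $BM$ restrict to a basis on \emph{every} fibre because the structure group $G$ is connected, so the monodromy on $H^*(G/M)$ is trivial; and the Leray--Hirsch step for Borel--Moore homology with coefficients pulled back from the base is cleanest in sheaf form, by checking that the chosen classes induce an isomorphism $\bigoplus_i \mc A[-n_i]\to R\pi_*\pi^*\mc A$ on stalks, after which cohomology, compactly supported cohomology and Borel--Moore homology all decompose with the shifts absorbed by Lusztig's normalization.
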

\begin{proof}
The proof of the analogous statement in equivariant K-homology \cite[1.8.(a)]{KaLu} can be 
translated to equivariant cohomology. The crucial point is the K\"unneth formula in equivariant
K-homology \cite[1.3.(n3)]{KaLu}, which is proven in \cite[\S 1.5--1.6]{KaLu}.

The conditions about simple connectedness in \cite[\S 1]{KaLu} are only needed to ensure that the
representation ring $R(T)$ is a free module over $R(G)$, for any maximal torus $T$ of $G$. 
In equivariant cohomology this translates to $H_T^* (\pt) \cong \mc O (\mf t)$ being free over 
\[
H^*_G (\pt) \cong \mc O (\mf g /\!/ G ) \cong \mc O (\mf t)^{W(G,T)}, 
\]
which is true for every connected reductive group $G$. 
\end{proof}

For $s \in \mf g$ we consider the inclusion $j : X^{\exp (\C s)} \to X$.

\begin{prop}\label{prop:B.2}
In the setup of Proposition \ref{prop:B.1}, assume that $s \in \mf g$ is central (and hence semisimple). 
The map
\[
\mr{id} \otimes j_! : \hat{H}^*_G (\pt)_{s} \underset{H^*_G (\pt)}{\otimes} H_*^G (X^{\exp (\C s)} 
,j^* \mc L) \longrightarrow \hat{H}^*_G (\pt)_{s} \underset{H^*_G (\pt)}{\otimes} H_*^G (X , \mc L) 
\]
is an isomorphism.
\end{prop}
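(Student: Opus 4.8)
The plan is to run the classical concentration (localization) argument, organized so that it uses only Proposition~\ref{prop:B.1} and never the parity hypothesis \eqref{eq:B.1}. Since $s$ is central in $G$ it lies in a maximal torus $T$ of $G$; writing $s=\exp\sigma$ with $\sigma$ in the centre of $\mf g$ (as in the applications), $\sigma$ is $W$-fixed, so the maximal ideal $\mf m_s\subset H^*_G(\pt)\cong\mc O(\mf g/\!/G)$ it determines has the single preimage $\mf m_\sigma$ in $H^*_T(\pt)\cong\mc O(\mf t)$, and $X^s$ (understood as the fixed-point set of the subtorus $\exp(\C\sigma)\subseteq Z(G)$) is $G$-stable. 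As $\mc O(\mf t)$ is a finite free module over $\mc O(\mf t)^W\cong\mc O(\mf g/\!/G)$, completion commutes with this base change, so $\hat H^*_T(\pt)_\sigma\cong\hat H^*_G(\pt)_s\otimes_{H^*_G(\pt)}H^*_T(\pt)$. Applying Proposition~\ref{prop:B.1} with $M=T$ to both $X$ and $X^s$, one checks that $H^*_T(\pt)\otimes_{H^*_G(\pt)}(-)$ turns the displayed map for $G$ into the analogous map for $T$; since $H^*_T(\pt)$ is faithfully flat over $H^*_G(\pt)$, it therefore suffices to treat the case $G=T$ a complex torus.

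For a torus $T$, put $U:=X\setminus X^s$, an open $T$-stable subvariety, and take equivariant Borel--Moore homology of the distinguished triangle $j_!j^!\mc L\to\mc L\to j_{U,*}j_U^*\mc L\to$ on $X$, where $j_U:U\to X$ is the open inclusion. This produces a long exact sequence containing the map $\mr{id}\otimes j_!$ of the statement. Because $H^*_T(\pt)\cong\mc O(\mf t)$ is Noetherian and its completion at $\mf m_\sigma$ is flat, the functor $\hat H^*_T(\pt)_s\otimes_{H^*_T(\pt)}(-)$ is exact, so it is enough to prove $\hat H^*_T(\pt)_s\otimes_{H^*_T(\pt)}H_*^T(U,\mc L|_U)=0$.

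For this I would stratify $U$ by isotropy type. Embedding $X$ equivariantly into a $T$-module shows that only finitely many closed subgroups of $T$ occur as point stabilizers, so $U$ admits a finite filtration by closed $T$-stable subsets whose successive locally closed pieces $S$ each have a single stabilizer $H$ with $s\notin H$. Using the long exact sequences of this filtration and flatness once more, it suffices to show $\hat H^*_T(\pt)_s\otimes_{H^*_T(\pt)}H_*^T(S,\mc L|_S)=0$ for such an $S$. Here the $T$-action on $S$ factors through the torus $T/H^\circ$ acting with finite stabilizers, which forces $H_*^T(S,\mc L|_S)$ to be a finitely generated $\mc O(\mf t)$-module supported on the linear subspace $\mf h:=\mr{Lie}(H^\circ)\subseteq\mf t$ (its $\mc O(\mf t)$-structure factoring through the restriction $\mc O(\mf t)\to\mc O(\mf h)$ up to a part acting nilpotently). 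Since $s\notin H$ we have $\sigma\notin\mf h$, so there is a linear function $f$ on $\mf t$ vanishing on $\mf h$ with $f(\sigma)\neq 0$; a suitable power $f^k$ then annihilates $H_*^T(S,\mc L|_S)$ while $f$ is a unit in $\hat H^*_T(\pt)_s$, giving the vanishing. Feeding this back up the filtration and then through the triangle for $(X^s,U)$ proves the proposition.

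The main difficulty is bookkeeping rather than one hard idea: one must verify that the isotropy stratification of $U$ is finite and can be arranged into a filtration by closed $T$-stable subsets, and that on each stratum the equivariant homology is genuinely supported off $\sigma$ (this is where finiteness of the residual stabilizer and the fact that $\C$-coefficients ignore $\pi_0$ enter). The more delicate conceptual point — and the reason this appendix is needed — is that every step must be made to run through Proposition~\ref{prop:B.1}, valid for all connected reductive $G$, rather than through the odd-vanishing condition \eqref{eq:B.1} of \cite[Proposition 4.4]{Lus-Cusp2}; in particular the reduction from $G$ to $T$ and the treatment of each orbit stratum use only flatness of completion and the elementary ``unit in the kernel'' computation. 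A last routine matter is reconciling the coefficient sheaf $j^*\mc L$ and the map $j_!$ of the statement with the $j^!$ appearing in the Borel--Moore triangle, which on the smooth varieties relevant to the applications amounts to a shift-and-twist to be normalized as in \cite{Lus-Cusp1,Lus-Cusp2}.
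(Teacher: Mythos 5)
Your proof is correct, and its first half is exactly the paper's argument: choose a maximal torus $T \ni s$, use Proposition \ref{prop:B.1} with $M=T$ on both $X$ and $X^s$, and use centrality of $s$ plus Chevalley's theorem (so that $\mc O (\mf t)$ is finite free over $\mc O (\mf t)^W$ and the maximal ideal determined by $s$ has a unique preimage) to get that $\hat{H}^*_T (\pt)_s$ is (faithfully) flat over $\hat{H}^*_G (\pt)_s$ and hence to descend the statement from $T$ to $G$. Where you diverge is the torus case itself: the paper simply quotes \cite[Proposition 4.4.a]{Lus-Cusp2}, whose proof for a torus does not need the parity hypothesis \eqref{eq:B.1}, whereas you reprove it via the classical concentration argument -- the long exact sequence in equivariant Borel--Moore homology for the decomposition $X = X^s \sqcup U$, a finite isotropy-type stratification of $U$ (finitely many stabilizers, obtained from an equivariant embedding into a $T$-module), and on each stratum with stabilizer $H \not\ni s$ the observation that the $\mc O (\mf t)$-action is killed by a power of a linear form $f$ vanishing on $\mr{Lie}(H^\circ)$ with $f \neq 0$ at the completion point, so that completion annihilates it. That argument is sound and self-contained; what it buys is independence from the cited part of \cite{Lus-Cusp2}, at the price of the stratification bookkeeping. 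Two of your cautionary remarks can be dispensed with: the reconciliation of $j^*\mc L$ with the $j^!$ in the triangle is automatic, since Borel--Moore homology with coefficients in a local system has a pushforward along a closed embedding for the restricted local system and the resulting long exact sequence for (closed, open complement) requires no smoothness or shift; and the implication you actually need on the complement is only that $s \notin H$ forces $\sigma \notin \mr{Lie}(H^\circ)$ (which holds, since $\sigma \in \mr{Lie}(H^\circ)$ would give $s = \exp \sigma \in H^\circ$), so no identification of $X^s$ with the fixed points of the one-parameter subgroup $\exp (\C \sigma)$ is needed.
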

\begin{proof}
Let $T \subset G$ be a maximal torus, so $s \in \mr{Lie}(T)$. The centrality of $s$ and 
Chevalley's theorem \cite[\S 4.9]{Var} entail that $\hat{H}^*_T (\pt)_{s}$ is a free module over 
$\hat{H}^*_G (\pt)_{s}$. By Proposition \ref{prop:B.1} 
\begin{align*}
\hat{H}^*_T (\pt)_s \underset{\hat{H}^*_G (\pt)_s}{\otimes} \hat{H}^*_G (\pt)_{s} 
\underset{H^*_G (\pt)}{\otimes} H_*^G (X ,\mc L) \; & \cong \;
\hat{H}^*_T (\pt)_s \underset{H^*_G (\pt)}{\otimes} H_*^G (X ,\mc L) \; \cong \\ 
\hat{H}^*_T (\pt)_s \underset{H^*_T (\pt)}{\otimes} H^*_T (\pt) 
\underset{H^*_G (\pt)}{\otimes} H_*^G (X ,\mc L) \; & \cong \;
\hat{H}^*_T (\pt)_s \underset{H^*_T (\pt)}{\otimes} H_*^T (X ,\mc L) ,
\end{align*}
and similarly with $(X^{\exp (\C s)}, j^* \mc L)$. In this way we reduce the issue from $G$ to $T$. 
That case is shown in \cite[Proposition 4.4.a]{Lus-Cusp2}.
\end{proof}

With Propositions \ref{prop:B.1} and \ref{prop:B.2} at hand, everything in \cite[\S 4]{Lus-Cusp2}
can be carried out without assuming that certain odd cohomology groups vanish. Instead we have
to assume that the involved groups are reductive, but that assumption can be lifted with 
\cite[\S 1.h]{Lus-Cusp1}. 

Problems with the condition \eqref{eq:B.1} also entail that \cite[5.1.(b,c,d)]{Lus-Cusp2} are
not necessarily isomorphisms in the setting of \cite[\S 8.12]{Lus-Cusp2}. Professor Lusztig showed
us that this can be overcome by rephrasing \cite[Proposition 5.2]{Lus-Cusp2} in the category
$\mc D^b_H (\tilde X)$ instead of $\mc D^b (\tilde X$), see \cite[\# 121]{Lus-Corr}.
The upshot is that all the proofs about representations of graded Hecke algebras in 
\cite[\S 8]{Lus-Cusp2} can be fixed.

\section{Compatibility with parabolic induction}
\label{sec:parabolic}

The family of geometric standard modules $E_{y,\sigma,r\rho}$ from Section \ref{sec:standard} behaves 
well under parabolic induction.
However, it does not behave as well as claimed in \cite[Theorem 3.4]{AMS2}: that result is slightly
too optimistic. Here we repair \cite[Theorem 3.4]{AMS2} by adding an extra condition, and we extend
it from graded Hecke algebras associated to a cuspidal support to graded Hecke algebras associated
to a cuspidal quasi-support. 

Let $P^\circ$ be a parabolic subgroup of $G^\circ$ with a Levi factor $L$. Let $v \in \mr{Lie}(L)$ 
be nilpotent and let $\cE$ be an $L$-equivariant cuspidal local system on $\mc C_v^L$. 
In \cite[\S 2]{AMS2}, a twisted graded Hecke algebra $\mh H (G,L,\cE)$ is associated to the cuspidal
support $(L,\mc C_v^L,\cE)$. Like in Condition \ref{cond:2.1}, we assume without loss of 
generality that $G = G^\circ N_G (P^\circ,\cE)$.

Let $Q$ be an algebraic subgroup of $G$ such that $Q^\circ = Q \cap G^\circ$ is a Levi subgroup
of $G$ and $L \subset Q^\circ$. Then $P^\circ Q^\circ$ is a parabolic subgroup of
$G^\circ$ with $Q^\circ$ as Levi factor. The unipotent radical $\mc R_u (P^\circ Q^\circ)$
is normalized by $Q^\circ$, so its Lie algebra $\mf u_Q = \mr{Lie} (\mc R_u (P^\circ Q^\circ))$
is stable under the adjoint actions of $Q^\circ$ and $\mf q$. In particular any $y \in \mf q$
acts on $\mf u_Q$ by the Lie bracket. We denote the cokernel of ad$(y) : \mf u_Q \to \mf u_Q$ by
$_y \mf u_Q$. For $N \in \mf u_Q$ and $(\sigma,r) \in Z_{\mf g \oplus \C}(y)$ we have
\[
[\sigma, [y,N]] = [y, [\sigma,N]] + [[\sigma,y], N] = 
[y, [\sigma,N]] + [2r y,N] \in \mr{ad}(y) \mf u_Q .
\]
Hence $\mr{ad}(\sigma)$ descends to a linear map ${}_y \mf u_Q \to {}_y \mf u_Q$. 
Following Lusztig \cite[\S 1.16]{Lus-Cusp3}, we define 
\[
\begin{array}{cccc}
\epsilon : & Z_{\mf q \oplus \C}(y) & \to & \C \\
& (\sigma,r) & \mapsto & \det( \mr{ad}(\sigma) - 2r : {}_y \mf u_Q \to {}_y \mf u_Q) 
\end{array}.
\]
It is easily seen that $\epsilon$ is invariant under the adjoint action of $Z_{Q \times \C^\times}(y)$, 
so it defines an element of $H^*_{Z_{Q \times \C^\times}(y)}(\{y\})$. For a given nilpotent $y$, 
all the parameters $(y,\sigma,r)$ for which parabolic induction from $\mh H (Q,L,\cE)$ to 
$\mh H (G,L,\cE)$ can behave problematically, are zeros of $\epsilon$. 

For any closed subgroup $S$ of $Z_{Q \times \C^\times}(y)^\circ$, restricting $\epsilon$ yields 
an element $\epsilon_S$ of $H^*_S (\{y\})$. We recall from \cite[Proposition 7.5]{Lus-Cusp1} 
that for connected $S$ there is a natural isomorphism 
\begin{equation}\label{eq:A.1}
H^S_* (\mc P_y, \dot{\cE}) \cong H_S^* (\{y\}) \underset{H^*_{Z^\circ_{G \times \C^\times}(y)}(\{y\})
}{\otimes} H^{Z^\circ_{G \times \C^\times}(y)}_* (\mc P_y, \dot{\cE}) .
\end{equation}
Here $H_S^* (\{y\})$ acts on the first tensor leg and $\mh H (G,L,\cE)$ acts on the second 
tensor leg. By \cite[Theorem 3.2.b]{AMS2} these actions commute, and $H_*^S (\mc P_y, \dot{\cE})$
becomes a module over $H_S^* (\{y\}) \otimes_\C \mh H (G,L,\cE)$.

To indicate that an object is constructed with respect to the group $Q$ (instead of $G$), we endow
it with a superscript $Q$. For instance, we have the variety $\mc P_y^Q$, which admits a natural map
\begin{equation}\label{eq:A.7}
\mc P_y^Q  \to \mc P_y : g (P^\circ \cap Q) \mapsto g P^\circ .
\end{equation}
Now we can formulate an improved version of \cite[Theorem 3.4]{AMS2}.

\begin{thm}\label{thm:A.1}
Let $S$ be a maximal torus of $Z^\circ_{Q \times \C^\times}(y)$.
\enuma{
\item The map \eqref{eq:A.7} induces an injection of $\mh H (G,L,\cE)$-modules
\[
\mh H (G,L,\cE) \underset{\mh H (Q,L,\cE)}{\otimes} H_*^S (\mc P_y^Q,\dot{\cE})
\to H_*^S (\mc P_y,\dot{\cE}) .
\]
It respects the actions of $H_S^* (\{y\})$ and its image contains 
$\epsilon_S H_*^S (\mc P_y,\dot{\cE})$.
\item Let $(\sigma,r) \in Z_{\mf q \oplus \C}(y)$ be semisimple, such that 
$\epsilon (\sigma,r) \neq 0$. The map \eqref{eq:A.7} induces an isomorphism of 
$\mh H (G,L,\cE)$-modules
\[
\mh H (G,L,\cE) \underset{\mh H (Q,L,\cE)}{\otimes} E^Q_{y,\sigma,r} 
\to E_{y,\sigma,r} ,
\]
which respects the actions of $\pi_0 (M^Q (y))_\sigma \cong \pi_0 (Z_Q (\sigma,y))$.
}
\end{thm}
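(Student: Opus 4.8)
The plan is to follow the strategy of \cite{Lus-Cusp1,Lus-Cusp3} and the original \cite[Theorem 3.4]{AMS2}, carefully tracking where the element $\epsilon_S$ controls the failure of surjectivity. First I would construct the map in part (a) geometrically. The map \eqref{eq:A.7} $\mc P_y^Q \to \mc P_y$ is a (locally trivial) fibration with fibres $\mc R_u(PQ^\circ)$-homogeneous, so pushing forward $\dot{\cE}^Q$ and using the compatibility of the $\C[W_{q\cE}^Q,\natural]$-action with parabolic induction one obtains a natural $\mh H(Q,L,\cE)$-linear map $H_*^S(\mc P_y^Q,\dot\cE) \to H_*^S(\mc P_y,\dot\cE)$, hence by adjunction the map on induced modules in (a). That this map respects the $H_S^*(\{y\})$-actions is immediate from functoriality of equivariant homology. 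Injectivity can be checked after base change to the fraction field of $H_S^*(\{y\})$, where it reduces (via \eqref{eq:A.1} and a Mayer--Vietoris / cellular fibration argument as in \cite[\S 7]{Lus-Cusp1}) to a comparison of dimensions of localized pieces; the key point is a Gysin-type computation showing that the fibre of \eqref{eq:A.7} contributes an Euler-class factor which is exactly $\epsilon_S$. From this one reads off that the image contains $\epsilon_S \cdot H_*^S(\mc P_y,\dot\cE)$: outside the zero locus of $\epsilon_S$ the localized map is an isomorphism, and $\epsilon_S$ clears denominators.

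Next I would pass from (a) to (b) by specialization at a semisimple $(\sigma,r) \in Z_{\mf q\oplus\C}(y)$. Here one tensors the injection of (a) over $H_S^*(\{y\})$ with the character $\C_{\sigma,r}$. Since induction $\mh H(G,L,\cE)\otimes_{\mh H(Q,L,\cE)}(-)$ is exact and commutes with this base change, the left-hand side becomes $\mh H(G,L,\cE)\otimes_{\mh H(Q,L,\cE)} E^Q_{y,\sigma,r}$ and the right-hand side becomes $E_{y,\sigma,r}$, using that $E_{y,\sigma,r} = \C_{\sigma,r}\otimes_{H^*_{Z^\circ_{G\times\C^\times}(y)}(\{y\})}H_*^{Z^\circ_{G\times\C^\times}(y)}(\mc P_y,\dot\cE)$ together with the freeness in Theorem \ref{thm:4.1}.b (which lets one replace $S$ by the full centralizer). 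The statement on image in (a) shows the specialized map is surjective as soon as $\epsilon_S$ does not act as $0$ after specialization, i.e. as soon as $\epsilon(\sigma,r)\neq 0$; combined with injectivity (which survives specialization because the module in (a) is free over $H_S^*(\{y\})$ on both sides — again Theorem \ref{thm:4.1}.b), this gives the isomorphism. The case $r=0$ must be handled separately: when $r=0$ one works inside $\mf g_N$ rather than $\mf g$, the relevant fibres of the analogue of \eqref{eq:A.7} are honest affine spaces on the nilpotent locus, and $\mathrm{ad}(\sigma)$ acting on ${}_y\mf u_Q$ is automatically semisimple and its nonzero-eigenvalue part cancels against the fibre, so the Euler-class obstruction vanishes without any condition on $\epsilon$; this is precisely the point that was overlooked in \cite[Theorem 3.4]{AMS2} and why the extra hypothesis ``$\epsilon(\sigma,r)\neq 0$ or $r=0$'' is needed. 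Finally the equivariance under $\pi_0(Z_Q(\sigma,y))$ follows because all constructions are $Z_{Q\times\C^\times}(y)$-equivariant and $\pi_0(M^Q(y))_\sigma\cong\pi_0(Z_Q(\sigma,y))$ by \cite[Lemma 3.6]{AMS2}.

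The main obstacle I anticipate is the precise Euler-class computation identifying the fibre contribution of \eqref{eq:A.7} with $\epsilon_S$, and more delicately, verifying that for $r\neq 0$ the vanishing of $\epsilon(\sigma,r)$ is the \emph{only} obstruction — i.e. that the localized map of (a) is an isomorphism exactly away from $V(\epsilon_S)$, not merely an injection with image sandwiched between $\epsilon_S H_*^S$ and $H_*^S$. This requires a clean cellular-fibration description of $\mc P_y$ over $\mc P_y^Q$ in the style of \cite[\S 1.16]{Lus-Cusp3}, and careful bookkeeping of the ${}_y\mf u_Q$ versus $\mathrm{ad}(y)\mf u_Q$ decomposition; the disconnectedness of $G$ (cuspidal \emph{quasi}-supports) adds the extra layer of descending everything through $G^\circ N_G(P,\cE)$ as in the proof of Theorem \ref{thm:2.7}, but that is routine once the connected case is settled.
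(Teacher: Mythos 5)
Your treatment of part (a) and of the case $\epsilon(\sigma,r)\neq 0$ in part (b) is essentially the paper's route: the paper simply reruns the proof of \cite[Theorem 3.4]{AMS2} and corrects the single non-commuting diagram \cite[(25)]{AMS2}, identifying the discrepancy between $i_!$ and $(p^*)^{-1}$ for the relevant vector bundle as multiplication by $\epsilon_S$ via \cite[Lemma 2.18]{Lus-Cusp3}; your Euler-class/Gysin description is the same mechanism. (One small caution: injectivity after specializing at $\C_{\sigma,r}$ does not follow from freeness alone — an injection of free modules can die on specialization; you in fact get the isomorphism from surjectivity, which holds because $\epsilon(\sigma,r)\neq 0$ clears $\epsilon_S$, together with equality of ranks read off from the generic isomorphism.)

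The genuine gap is your handling of $r=0$. Your claim that on the nilpotent locus ``the Euler-class obstruction vanishes without any condition on $\epsilon$'' is unsubstantiated and false as stated: at $r=0$ one has $\epsilon(\sigma,0)=\det(\mathrm{ad}(\sigma)\colon {}_y\mf u_Q\to{}_y\mf u_Q)$, which certainly can vanish (take $\sigma=0$, for instance), and neither the map $\mc P_y^Q\to\mc P_y$ nor the bundle whose equivariant Euler class is $\epsilon_S$ changes when you restrict attention to $\mf g_N$; there is no cancellation of the nonzero-eigenvalue part against the fibre. The theorem at $r=0$ is true, but for a different reason, and the paper's argument is not geometric at all: one reduces (via transitivity of induction and a minimal Levi $Q_y\subset Z_{G^\circ}(\sigma)$ containing $L$ and $\exp(y)$) to a situation where $\mc O(\mf t\oplus\C)$ acts on both $E^{Z_{G^\circ}(\sigma)}_{y,\sigma,0}$ and the induced module by evaluation at $(\sigma,0)$, so both modules are determined by their $\C[W_\cE^{Z_{G^\circ}(\sigma)}]$-structure; by \cite[Theorem 3.2.c]{AMS2} that $W$-structure is independent of $(\sigma,r)$, so the isomorphism established at a nearby parameter with $\epsilon\neq 0$ transfers to $(\sigma,0)$, and then \eqref{eq:A.8}, transitivity of induction and \cite[Lemma 3.3]{AMS2} (to pass from $G^\circ$ to $G$) finish the proof. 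Without such a rigidity/deformation argument your proposal does not prove part (b) when $r=0$ and $\epsilon(\sigma,0)=0$. Relatedly, the flaw in \cite[Theorem 3.4]{AMS2} was not an overlooked $r=0$ subtlety but precisely the failure of commutativity of \cite[(25)]{AMS2} measured by $\epsilon_S$, which is why the repaired statement carries the hypothesis ``$\epsilon(\sigma,r)\neq 0$ or $r=0$''.
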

\begin{proof}
(a) The given proof of \cite[Theorem 3.4]{AMS2} is valid with only one modification.
Namely, the diagram \cite[(25)]{AMS2} does not commute. A careful consideration of
\cite[\S 2]{Lus-Cusp3} shows that the failure to do so stems from the difference between certain
maps $i_!$ and $(p^*)^ {-1}$, where $p$ is the projection of a vector bundle on its
base space and $i$ is the zero section of the same vector bundle. In \cite[Lemma 2.18]{Lus-Cusp3}
this difference is identified as multiplication by $\epsilon_S$. \\
(b) Upon replacing $(\sigma, r)$ by a $Q^\circ$-conjugate element, we may assume that it lies in 
$\mr{Lie}(S)$. Then the proof of \cite[Theorem 3.4.b]{AMS2} needs only one small adjustment. 
From \eqref{eq:A.1} we get
\begin{align*}
\C_{\sigma,r} \underset{H_S^* (\{y\})}{\otimes} \epsilon_S H_*^S (\mc P_y, \dot{\cE}) & \; \cong \; 
\C_{\sigma,r} \underset{H_S^* (\{y\})}{\otimes} \epsilon_S H^*_S (\{y\}) \underset{
H_{M(y)^\circ}^* (\{y\})}{\otimes} H_*^{M (y)^\circ} (\mc P_y, \dot{\cE}) \\
& \; \cong \; \C_{\sigma,r} \underset{H_{M(y)^\circ}^* (\{y\})}{\otimes} H_*^{M (y)^\circ} 
(\mc P_y, \dot{\cE}) \; = \; E_{y,\sigma,r} .
\end{align*}
The difference with before is the appearance of $\epsilon_S$, with that and the above
the proof of \cite[Theorem 3.4.b]{AMS2} goes through.
\end{proof}

There is just one result in \cite{AMS2} that uses \cite[Theorem 3.4]{AMS2}, namely 
\cite[Proposition 3.22]{AMS2}. It has to be replaced by a version that involves only the cases 
of \cite[Theorem 3.4]{AMS2} covered by Theorem \ref{thm:A.1}.b. 

Now we set out to formulate and prove analogues of \cite[Theorem 3.4 and Proposition 3.22]{AMS2}
in the setting of Section \ref{sec:setup}, so with a cuspidal quasi-support $(M,\cC_v^M,q\cE)$.
Also, Condition \ref{cond:2.1} remains in force.

For comparison with Theorem \ref{thm:A.1} we assume that $M^\circ = L$ and that $\cE$ is contained
in the restriction of $q\cE$ to $\cC_v^L$. It is known from \cite[(93)]{AMS2} that
\begin{equation}\label{eq:A.10}
\mh H (G^\circ M, M, q\cE) = \mh H (G^\circ, L, \cE) .
\end{equation}
Taking this into account, \cite[(91)]{AMS2} provides a canonical isomorphism of \\
$\mh H (G,M,q\cE)$-modules
\begin{equation}\label{eq:A.11}
\begin{aligned}
H_*^{Z^\circ_{G \times \C^\times}(y)} (\mc P_y ,\dot{q\cE}) & \cong
\mr{ind}_{\mh H (G^\circ M, M, q\cE)}^{\mh H (G,M,q\cE)} H_*^{Z^\circ_{G^\circ M \times \C^\times}(y)} 
(\mc P_y^{G^\circ M} ,\dot{q\cE}) \\
& \cong \mr{ind}_{\mh H (G^\circ , L, \cE)}^{\mh H (G,M,q\cE)} 
H_*^{Z^\circ_{G^\circ \times \C^\times}(y)} (\mc P_y^{G^\circ} ,\dot{\cE}) .
\end{aligned}
\end{equation}
According to \cite[(95)]{AMS2} we can write $(q\cE )_v = \cE_v \rtimes \rho_M$ for a unique
\begin{equation}\label{eq:A.13}
\rho_M \in \Irr (\C [M_\cE / M^\circ, \natural_\cE]) = \Irr (\C [W_\cE / W_\cE^\circ, \natural_\cE]) .
\end{equation}
Next \cite[Lemma 4.4]{AMS2} says that, when $\sigma_0 \in \mf t$, the sets
\begin{equation}\label{eq:A.12}
\begin{aligned}
& \big\{ \rho^\circ \in \Irr \big( \pi_0 (Z (\sigma_0, y)) \big) : \Psi_{Z_G^\circ (\sigma_0)} 
(y,\rho^\circ) = [L,\cC_v^L,\cE ]_{G^\circ} \big\} , \\
& \big\{ \tau^\circ \in \Irr \big( \pi_0 (Z_{G^\circ M} (\sigma_0, y)) \big) : 
q\Psi_{M Z_G^\circ (\sigma_0)} (y,\tau^\circ) = [M,\cC_v^M,q\cE ]_{G^\circ M} \big\}
\end{aligned}
\end{equation}
are in bijection via $\rho^\circ \mapsto \rho^\circ \rtimes \rho_M$.
Further, by \cite[Lemma 4.5]{AMS2} the identification \eqref{eq:A.10} turns a standard 
$\mh H (G^\circ ,L,\cE)$-module $E^{G^\circ}_{y,\sigma,r,\rho^\circ}$ into the standard
$\mh H (G^\circ M,M,q\cE)$-module $E_{y,\sigma,r,\rho^\circ \rtimes \rho_M}$.

\begin{thm}\label{thm:A.4}
Let $Q$ be an algebraic subgroup of $G$ such that $Q^\circ = G^\circ \cap Q$ is a Levi subgroup of 
$G^\circ$ and $M \subset Q$. Let $y \in \mf q$ be nilpotent and let $S$ be a maximal torus of
$Z_{Q \times \C^\times}(y)$. Further, let $(\sigma,r) \in Z_{\mf q \oplus \C}(y)$ be semisimple,
such that $\epsilon (\sigma,r) \neq 0$.
\enuma{
\item The map \eqref{eq:A.7} induces an injection of $\mh H (G,M,q\cE)$-modules
\[
\mh H (G,M,q\cE) \underset{\mh H (Q,M,q\cE)}{\otimes} H_*^S (\mc P_y^Q,\dot{q\cE})
\to H_*^S (\mc P_y,\dot{q\cE}) .
\]
It respects the actions of $H_S^* (\{y\})$ and its image contains 
$\epsilon_S H_*^S (\mc P_y,\dot{q\cE})$.
\item The map \eqref{eq:A.7} induces an isomorphism of $\mh H (G,M,q\cE)$-modules
\[
\mh H (G,M,q\cE) \underset{\mh H (Q,M,q\cE)}{\otimes} E^Q_{y,\sigma,r} \to E_{y,\sigma,r} ,
\]
which respects the actions of $\pi_0 (Z_Q (\sigma,y))$.
}
Let $\rho \in \Irr \big( \pi_0 (Z_G (\sigma,y)) \big)$ with $q\Psi_{Z_G (\sigma_0)} (y,\rho) =
[M,\cC_v^M,q\cE ]_G$ and let\\ $\rho^Q \in \Irr \big( \pi_0 (Z_Q (\sigma,y)) \big)$ 
with $q\Psi_{Z_Q (\sigma_0)} (y,\rho) = [M,\cC_v^M,q\cE ]_Q$.
\enuma{\setcounter{enumi}{2}
\item There is a natural isomorphism of $\mh H (G,M,q\cE)$-modules
\[
\mh H (G,M,q\cE) \underset{\mh H (Q,M,q\cE)}{\otimes} E^Q_{y,\sigma,r,\rho^Q} \cong 
\bigoplus\nolimits_\rho \Hom_{\pi_0 (Z_Q (\sigma,y))} (\rho^Q ,\rho) \otimes 
E_{y,\sigma,r,\rho} ,
\]
where the sum runs over all $\rho$ as above.
\item For $r=0$, part (c) induces an isomorphism of 
$\mc O (\mf t \oplus \C) \rtimes \C[W_q\cE, \natural_{q\cE}]$-modules
\[
\mh H (G,M,q\cE) \underset{\mh H (Q,M,q\cE)}{\otimes} M^Q_{y,\sigma,0,\rho^Q} \cong 
\bigoplus\nolimits_\rho \Hom_{\pi_0 (Z_Q (\sigma,y))} (\rho^Q ,\rho) \otimes 
M_{y,\sigma,0,\rho} .
\]
\item The multiplicity of $M_{y,\sigma,r,\rho}$ in $\mh H (G,M,q\cE) 
\underset{\mh H (Q,M,q\cE)}{\otimes} E^Q_{y,\sigma,r,\rho^Q}$ is
$[\rho^Q : \rho]_{\pi_0 (Z_Q (\sigma,y))}$. \\
It already appears that many times as a quotient, via $E^Q_{y,\sigma,r,\rho^Q} \to 
M^Q_{y,\sigma,r,\rho^Q}$. More precisely, there is a natural isomorphism
\[
\Hom_{\mh H (Q,L,\cL)} (M^Q_{y,\sigma,r,\rho^Q}, M_{y,\sigma,r,\rho}) \cong
\Hom_{\pi_0 (Z_Q (\sigma,y))} (\rho ,\rho^Q ) .
\]
}
\end{thm}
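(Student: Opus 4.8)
The plan is to reduce all five parts to the connected situation, where they are already available from Theorem~\ref{thm:A.1}, by descending from $(G,Q)$ to $(G^\circ,Q^\circ)$ and then inducing back up. The bridge is \eqref{eq:A.10}--\eqref{eq:A.11}: the identity $\mh H(G^\circ M,M,q\cE)=\mh H(G^\circ,L,\cE)$ (and its analogue for $Q$), together with the fact that $H_*^{Z^\circ_{G\times\C^\times}(y)}(\mc P_y,\dot{q\cE})$ is $\mr{ind}_{\mh H(G^\circ,L,\cE)}^{\mh H(G,M,q\cE)}$ applied to $H_*^{Z^\circ_{G^\circ\times\C^\times}(y)}(\mc P_y^{G^\circ},\dot{\cE})$, and the bijection $\rho^\circ\mapsto\rho^\circ\rtimes\rho_M$ of \eqref{eq:A.12} with \cite[Lemmas 4.4, 4.5]{AMS2}, which matches standard modules and component groups across the two cuspidal (quasi-)supports.

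For (a) and (b): after localizing over $S$ via \eqref{eq:A.1} it suffices to work with $H_*^{Z^\circ}(\mc P_y,\dot{q\cE})$ and $E_{y,\sigma,r}$. Using transitivity of induction together with \eqref{eq:A.11} for both $G$ and $Q$, I would rewrite $\mh H(G,M,q\cE)\otimes_{\mh H(Q,M,q\cE)}(-)$ as $\mr{ind}_{\mh H(G^\circ,L,\cE)}^{\mh H(G,M,q\cE)}$ applied to $\mh H(G^\circ,L,\cE)\otimes_{\mh H(Q^\circ,L,\cE)}(-)$, so that the map induced by \eqref{eq:A.7} becomes exactly the map of Theorem~\ref{thm:A.1}.(a,b) for the pair $(G^\circ,Q^\circ)$ with cuspidal support $(L,\cE)$. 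Since \eqref{eq:A.7} and the localization \eqref{eq:A.1} depend only on the geometry, not on the disconnectedness, the assertions that the image contains $\epsilon_S H_*^S(\mc P_y,\dot{q\cE})$, that the map is $H_S^*(\{y\})$-linear, and that it is $\pi_0(Z_Q(\sigma,y))$-equivariant all descend from Theorem~\ref{thm:A.1}. Then (c) follows by decomposing $E^Q_{y,\sigma,r}\cong\bigoplus_{\rho^Q}\rho^Q\otimes E^Q_{y,\sigma,r,\rho^Q}$ as a module over $\mh H(Q,M,q\cE)\times\pi_0(Z_Q(\sigma,y))$, applying $\mh H(G,M,q\cE)\otimes_{\mh H(Q,M,q\cE)}(-)$, identifying the result with $E_{y,\sigma,r}\cong\bigoplus_\rho\rho\otimes E_{y,\sigma,r,\rho}$ $\pi_0(Z_Q(\sigma,y))$-equivariantly by (b) (with $\rho$ restricted along $\pi_0(Z_Q(\sigma,y))\to\pi_0(Z_G(\sigma,y))$), and comparing $\rho^Q$-isotypic components; the sum is effectively over the $\rho$ with $q\Psi_{Z_G(\sigma_0)}(y,\rho)=[M,\cC_v^M,q\cE]_G$ because the other $E_{y,\sigma,r,\rho}$ vanish by Proposition~\ref{prop:2.6}. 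For (d), specializing to $r=0$, all standard modules split into homological-degree components, the distinguished summand $M_{y,\sigma,0,\rho}$ occupies one fixed degree (\cite[Lemma 3.10, Theorem 3.20]{AMS2}), and the map induced by \eqref{eq:A.7} respects these degrees (as in the proof of Proposition~\ref{prop:5.1}.c), so restricting the isomorphism of (c) to the distinguished summands gives (d), the grading being compatible only with the $\mc O(\mf t\oplus\C)\rtimes\C[W_{q\cE},\natural_{q\cE}]$-structure.

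For (e): the first formula combines (c) with the observation that $\mu(M_{y,\sigma,r,\rho},E_{y,\sigma,r,\rho'})=\delta_{\rho,\rho'}$ for a fixed nilpotent element; indeed, by Proposition~\ref{prop:2.9}.a this multiplicity equals the multiplicity of $\mr{ind}_{C_y}^C(\rho)$ in $\mc H^*\big(\IC_C(\mf g_N^{\sigma,r},\mr{ind}_{C_{y}}^C(\rho'))\big)\big|_{\mc O_y}$, and the restriction of an intersection cohomology complex to its own open orbit is the defining local system in a single degree. For the second formula I would apply $\mr{ind}_{\mh H(Q,M,q\cE)}^{\mh H(G,M,q\cE)}$ to the surjection $E^Q_{y,\sigma,r}\twoheadrightarrow\bigoplus_{\rho^Q}\rho^Q\otimes M^Q_{y,\sigma,r,\rho^Q}$ onto the top (for $r=0$ this is (d)); using $\mr{top}(\mr{ind}(X))=\mr{top}(\mr{ind}(\mr{top}(X)))$ together with the $\pi_0(Z_Q(\sigma,y))$-equivariant isomorphism $\mr{ind}(E^Q_{y,\sigma,r})\cong E_{y,\sigma,r}$ from (b), and taking $\rho^Q$-isotypic parts of the tops, one gets $\mr{top}\big(\mr{ind}(M^Q_{y,\sigma,r,\rho^Q})\big)\cong\bigoplus_\rho\Hom_{\pi_0(Z_Q(\sigma,y))}(\rho^Q,\rho)\otimes M_{y,\sigma,r,\rho}$. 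This yields the multiplicity statement, and
\[
\Hom_{\mh H(Q,M,q\cE)}\big(M^Q_{y,\sigma,r,\rho^Q},M_{y,\sigma,r,\rho}\big)\cong\Hom_{\mh H(G,M,q\cE)}\big(\mr{ind}(M^Q_{y,\sigma,r,\rho^Q}),M_{y,\sigma,r,\rho}\big)\cong\Hom_{\pi_0(Z_Q(\sigma,y))}(\rho,\rho^Q)
\]
follows by adjunction, Schur's lemma, and the natural duality $\Hom_{\pi_0(Z_Q(\sigma,y))}(\rho^Q,\rho)^\vee\cong\Hom_{\pi_0(Z_Q(\sigma,y))}(\rho,\rho^Q)$.

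The hard part will not be any single deep step but the bookkeeping linking the cuspidal support $(L,\cE)$ with the cuspidal quasi-support $(M,\cC_v^M,q\cE)$: one must check that the identifications descending from and returning to the connected case are compatible with the $\rho_M$-twist of \eqref{eq:A.13} and with the natural maps between $\pi_0(Z_Q(\sigma,y))$, $\pi_0(Z_{Q^\circ}(\sigma,y))$ and their ambient analogues, so that the isomorphism of (b) genuinely intertwines the $\pi_0(Z_Q(\sigma,y))$-actions on which (c)--(e) all rest. A second delicate point is (d), where one must verify that the homological degrees singling out the distinguished summands correspond to one another under the parabolic induction map \eqref{eq:A.7}.
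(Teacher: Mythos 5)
For parts (a) and (b) your route is exactly the paper's: localize via \eqref{eq:A.1}, use \eqref{eq:A.10}--\eqref{eq:A.11} and transitivity of induction to rewrite both sides as $\mr{ind}_{\mh H (G^\circ,L,\cE)}^{\mh H (G,M,q\cE)}$ of the corresponding objects for $(G^\circ,Q^\circ,L,\cE)$, and then quote Theorem \ref{thm:A.1} together with exactness of induction; the bookkeeping with $\rho_M$, \eqref{eq:A.12} and \cite[Lemmas 4.4--4.5]{AMS2} that you flag as the delicate point is precisely what the paper uses and is unproblematic. For (c)--(e) the paper does not spell out an argument at all: it invokes the proof of \cite[Proposition 3.22]{AMS2} with part (b) replacing \cite[Theorem 3.4.b]{AMS2} and with the quasi-support extensions from \cite[\S 5]{AMS1} and \cite[\S 4]{AMS2}. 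Your direct reconstruction of (c) (isotypic decomposition of $E^Q_{y,\sigma,r}$ over $\mh H (Q,M,q\cE)\times \pi_0 (Z_Q (\sigma,y))$, induction, the $\pi_0$-equivariance of (b), and vanishing via Proposition \ref{prop:2.6}) is sound, as is the first claim of (e): combining (c) with $\mu (M_{y,\sigma,r,\rho}, E_{y,\sigma,r,\rho'}) = \delta_{\rho,\rho'}$, which you legitimately extract from Proposition \ref{prop:2.9}.a (no circularity, since that proposition does not use the appendix).

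The genuine gap is in the last claim of (e). Your derivation of $\mr{top}\big(\mr{ind}(M^Q_{y,\sigma,r,\rho^Q})\big)\cong\bigoplus_\rho \Hom_{\pi_0 (Z_Q (\sigma,y))}(\rho^Q,\rho)\otimes M_{y,\sigma,r,\rho}$ rests on the identity $\mr{top}(\mr{ind}(X))=\mr{top}(\mr{ind}(\mr{top}(X)))$, which is false for a general pair of algebras: an irreducible quotient of $\mr{ind}(X)$ corresponds by adjunction to a map $X\to \Res V$ whose image need not be semisimple, hence need not factor through $\mr{top}(X)$. What the surjections $\mr{ind}(E^Q_{\rho^Q})\to \mr{ind}(M^Q_{\rho^Q})$ and $\mr{ind}(E^Q_{\rho^Q})\to\bigoplus_\rho \Hom (\rho^Q,\rho)\otimes M_\rho$ give for free is only one inequality, namely $\dim \Hom_{\mh H (G,M,q\cE)}\big(\mr{ind}(M^Q_{y,\sigma,r,\rho^Q}), M_{y,\sigma,r,\rho}\big)\le [\rho^Q:\rho]$ (the cosocle of $\mr{ind}(\mr{top}(E^Q_{\rho^Q}))$ is a quotient of the cosocle of $\mr{ind}(E^Q_{\rho^Q})$, not conversely). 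The asserted equality requires the reverse bound, i.e.\ in effect that $\mr{ind}\big(\mathrm{rad}\, E^Q_{y,\sigma,r,\rho^Q}\big)$ lands in the radical of $\mr{ind}(E^Q_{y,\sigma,r,\rho^Q})$, or equivalently that the quotient map of (c) onto $\bigoplus_\rho \Hom (\rho^Q,\rho)\otimes M_\rho$ factors through $\mr{ind}(M^Q_{y,\sigma,r,\rho^Q})$; this is exactly the content supplied by the argument of \cite[Proposition 3.22]{AMS2} (and it, rather than the $\rho_M$-matching or the grading in (d), is the step your proposal leaves unproved). Frobenius reciprocity and the duality $\Hom (\rho^Q,\rho)^\vee\cong\Hom (\rho,\rho^Q)$ are fine, but they only reformulate the missing statement.
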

\begin{proof}
(a) By \eqref{eq:A.1} and \cite[Lemma 3.3 and \S 4]{AMS2} the right hand side of the statement
is canonically isomorphic with
\begin{multline*}
H_S^* (\{y\}) \underset{H^*_{Z^\circ_{G \times \C^\times}(y)}}{\otimes} 
H_*^{Z^\circ_{G \times \C^\times}(y)} (\mc P_y, \dot{q\cE}) \; \cong \\
H_S^* (\{y\}) \underset{H^*_{Z^\circ_{G \times \C^\times}(y)}}{\otimes} \mh H (G,M,q\cE) 
\underset{\mh H (G^\circ M,M,q\cE)}{\otimes} H_*^{Z^\circ_{G^\circ M \times \C^\times}(y)} 
(\mc P_y^{G^\circ M}, \dot{q\cE}) . 
\end{multline*}
Via \eqref{eq:A.11} that is canonically isomorphic with
\begin{multline*}
H_S^* (\{y\}) \underset{H^*_{Z^\circ_{G \times \C^\times}(y)}}{\otimes} \mh H (G,M,q\cE) 
\underset{\mh H (G^\circ,L,\cE)}{\otimes} H_*^{Z^\circ_{G^\circ \times \C^\times}(y)}
(\mc P_y^{G^\circ}, \dot{\cE}) \\
\cong \; \mh H (G,M,q\cE) 
\underset{\mh H (G^\circ,L,\cE)}{\otimes} H_*^S (\mc P_y^{G^\circ}, \dot{\cE}) .
\end{multline*}
For similar reasons the left hand side of the statement is canonically isomorphic with
\begin{align*}
& \mh H (G,M,q\cE) \underset{\mh H (Q,M,q\cE)}{\otimes} \mh H (Q,M,q\cE) 
\underset{\mh H (Q^\circ M,M,q\cE)}{\otimes} H_*^S (\mc P_y^{Q^\circ M}, \dot{q\cE}) \; \cong \\
& \mh H (G,M,q\cE) \underset{\mh H (G^\circ M,M,q\cE)}{\otimes} \mh H (G^\circ M,M,q\cE) 
\underset{\mh H (Q^\circ M,M,q\cE)}{\otimes} H_*^S (\mc P_y^{Q^\circ M}, \dot{q\cE}) \; \cong \\
& \mh H (G,M,q\cE) \underset{\mh H (G^\circ,L,\cE)}{\otimes} \mh H (G^\circ,L,\cE) 
\underset{\mh H (Q^\circ,L,\cE)}{\otimes} H_*^S (\mc P_y^{Q^\circ}, \dot{\cE}) .
\end{align*}
Now we apply Theorem \ref{thm:A.1}.a for $G^\circ, Q^\circ, L,\cE$ and use the exactness of
$\mr{ind}^{\mh H (G,M,q\cE)}_{\mh H (G^\circ,L,\cE)}$.\\
(b) Like in part (a) there are canonical isomorphisms
\begin{align*}
E_{y,\sigma,r} \; & \cong \; \mh H (G,M,q\cE) \underset{\mh H (G^\circ M,M,q\cE)}{\otimes}  
E_{y,\sigma,r}^{G^\circ M} \; \cong \; 
\mh H (G,M,q\cE) \underset{\mh H (G^\circ,L,\cE)}{\otimes} E_{y,\sigma,r}^{G^\circ} ,\\
\mh H (G,& M,q\cE) \underset{\mh H (Q,M,q\cE)}{\otimes} E_{y,\sigma,r}^Q \\
& \cong \; \mh H (G,M,q\cE) \underset{\mh H (G^\circ M,M,q\cE)}{\otimes} \mh H (G^\circ M,M,q\cE)    
\underset{\mh H (Q^\circ M,M,q\cE)}{\otimes} E_{y,\sigma,r}^{Q^\circ M} \\
& \cong \; \mh H (G,M,q\cE) \underset{\mh H (G^\circ,L,\cE)}{\otimes} \mh H (G^\circ,L,\cE)    
\underset{\mh H (Q^\circ,L,\cE)}{\otimes} E_{y,\sigma,r}^{Q^\circ} .
\end{align*}
It remains to apply Theorem \ref{thm:A.1}.b.\\
(c,d,e) These can be shown in the same way as \cite[Proposition 3.22]{AMS2}, with the following
modifications:
\begin{itemize}
\item We use part (b) instead of \cite[Theorem 3.4.b]{AMS2}.
\item The references to \cite[\S 4]{AMS1} should be extended to the setting with cuspidal 
quasi-supports by means of \cite[\S 5]{AMS1}.
\item The references to \cite[\S 3]{AMS2} should be extended to the setting with cuspidal 
quasi-supports by invoking \cite[\S 4]{AMS2}. \qedhere
\end{itemize}
\end{proof}

Since $\epsilon$ is a nonzero polynomial function, its zero set is a subvariety (say of $V_y$) 
of smaller dimension. Still, we want to explicitly exhibit a large class of parameters 
$(y,\sigma,r)$ on which $\epsilon$ does not vanish. By \cite[Proposition 1.4.c]{AMS3} we may 
assume (via conjugation by an element of $G^\circ$) that $\sigma_0 ,\sigma - r \sigma_v \in \mf t$.

Let us call $x \in \mf t$ (strictly) positive with respect to $P Q^\circ$ if 
$\Re (\alpha (t))$ is (strictly) positive for all $\alpha \in R (\mc R_u (P Q^\circ),T)$.
We say that $x$ is (strictly) negative with respect to $P Q^\circ$ if $-x$ is (strictly) positive.

\begin{lem}\label{lem:A.2}
Let $y \in \mf q$ be nilpotent and let $(\sigma,r) \in \Sigma_v (\mf t \oplus \C)$ with
$[\sigma,y] = 2r y$. Write $\sigma = \sigma_0 + \textup d \gamma_y \matje{r}{0}{0}{-r}$
as in \eqref{eq:4.4}, with $\sigma_0 \in Z_{\mf t}(y)$. Assume that one 
of the following holds:
\begin{itemize}
\item $\Re (r) > 0$ and $\sigma_0$ is negative with respect to $P Q^\circ$;
\item $\Re (r) < 0$ and $\sigma_0$ is positive with respect to $P Q^\circ$;
\item $\Re (r) = 0$ and $\sigma_0$ is strictly positive or strictly negative
with respect to $P Q^\circ$.
\end{itemize}
Then $\epsilon (\sigma,r) \neq 0$.
\end{lem}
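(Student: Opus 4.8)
The plan is to compute the polynomial $\epsilon$ explicitly by diagonalizing $\ad(\sigma) - 2r$ on ${}_y\mf u_Q$ with $\mf{sl}_2$-theory, and then to verify each of the three sign conditions factor by factor. First I would fix an $\mf{sl}_2$-triple: since $Q^\circ$ is a Levi subgroup of $G^\circ$, the subalgebra $Z_{\mf q}(\sigma_0)$ is reductive and contains $y$ (because $\sigma_0 \in Z_{\mf t}(y)$), so Jacobson--Morozov inside $Z_{\mf q}(\sigma_0)$ produces $\gamma_y : SL_2(\C) \to Z_{Q^\circ}(\sigma_0)$ with $\mr d\gamma_y\matje{0}{1}{0}{0} = y$. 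Writing $h = \mr d\gamma_y\matje{1}{0}{0}{-1}$ and $f = \mr d\gamma_y\matje{0}{0}{1}{0}$, the hypothesis on $\sigma$ becomes $\sigma = \sigma_0 + rh$, with $[\sigma_0, h] = [\sigma_0, f] = 0$ and $[h,y] = 2y$.

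Next I would exploit that $\mf u_Q = \mr{Lie}(\mc R_u(PQ^\circ))$ is $\ad(\mf q)$-stable, hence stable under the commuting actions of $\ad(\sigma_0)$ and of the $\mf{sl}_2$ spanned by $(y,h,f)$. Each $\ad(\sigma_0)$-eigenspace is an $\mf{sl}_2$-submodule; decomposing these into irreducibles gives $\mf u_Q \cong \bigoplus_j V(m_j)$ with each $V(m_j)$ of a single $\ad(\sigma_0)$-eigenvalue $a_j$. Here $\ad(y)$ is the raising operator, so ${}_y\mf u_Q = \mathrm{coker}\,\ad(y)$ is the span of the lowest-weight lines of the $V(m_j)$, on the $j$-th of which $\ad(h)$ acts by $-m_j$. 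Such a line lies inside $\bigoplus_\beta \mf g_\beta$ for roots $\beta \in R(\mc R_u(PQ^\circ), T)$ with $\beta(\sigma_0) = a_j$ and $\beta(h) = -m_j$; hence $\ad(\sigma) - 2r = \ad(\sigma_0) + r\,\ad(h) - 2r$ acts there by the scalar $a_j - r m_j - 2r = \beta(\sigma_0) - r(m_j+2)$. Choosing one such $\beta_j$ for each $j$, this gives
\[
\epsilon(\sigma,r) = \prod_j \big(\beta_j(\sigma_0) - r(m_j+2)\big),
\]
a finite product over roots $\beta_j \in R(\mc R_u(PQ^\circ), T)$ with $m_j \geq 0$, so $m_j + 2 > 0$.

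Finally I would check that every factor is nonzero by taking real parts. If $\Re(r) > 0$ and $\sigma_0$ is negative with respect to $PQ^\circ$, then $\Re\,\beta_j(\sigma_0) \leq 0$ while $\Re(r)(m_j+2) > 0$, so $\Re\big(\beta_j(\sigma_0) - r(m_j+2)\big) < 0$; the case $\Re(r) < 0$ with $\sigma_0$ positive is symmetric; and for $\Re(r) = 0$ with $\sigma_0$ strictly positive or strictly negative one has $\Re\big(\beta_j(\sigma_0) - r(m_j+2)\big) = \Re\,\beta_j(\sigma_0) \neq 0$. In every case the factor is nonzero, whence $\epsilon(\sigma,r) \neq 0$.

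The main obstacle will be the bookkeeping in the $\mf{sl}_2$-decomposition: one must be sure that $\mathrm{coker}\,\ad(y)$ sits in the lowest weights $-m_j$ (this is exactly what produces the decisive $m_j + 2$, rather than $m_j - 2$, and makes the real-part comparison uniform across all three cases), and that these weight lines genuinely split into root spaces of $\mc R_u(PQ^\circ)$ so that the positivity hypotheses on $\sigma_0$ can be invoked. This runs parallel to the computation of $\epsilon$ in Lusztig \cite{Lus-Cusp3}, against whose sign conventions the formula above should be cross-checked.
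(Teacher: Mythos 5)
Your proposal is correct and follows essentially the same route as the paper: decompose $\mf u_Q$ under the commuting actions of $\mr{ad}(\sigma_0)$ and the $\mf{sl}_2$ through $y$, identify $\mr{coker}(\mr{ad}(y))$ with the lowest weight lines so that each factor of $\epsilon(\sigma,r)$ has the form $\beta(\sigma_0) - (n+2)r$ with $\beta \in R(\mc R_u (P Q^\circ),T)$ and $n \geq 0$, and then rule out vanishing by comparing real parts in each of the three cases. The only cosmetic difference is that you arrange the commutation of $\sigma_0$ with the full $\mf{sl}_2$ by (re)choosing $\gamma_y$ via Jacobson--Morozov in the centralizer, which matches the freedom already implicit in \eqref{eq:4.4}.
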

\begin{proof}
Via d$\gamma_y : \mf{sl}_2 (\C) \to \mf q$, $\mf u_Q$ becomes a finite dimensional
$\mf{sl}_2 (\C)$-module. Since $\sigma_0 \in \mf t$ commutes with $y$, it commutes with
d$\gamma_y (\mf{sl}_2 (\C))$. For any eigenvalue $\lambda \in \C$ of $\sigma_0$, let
$_\lambda \mf u_Q$ be the eigenspace in $\mf u_Q$. 

For $n \in \Z_{\geq 0}$ let $\mr{Sym}^n (\C^2)$ be the unique irreducible 
$\mf{sl}_2 (\C)$-module of dimension $n+1$. We decompose the $\mf{sl}_2 (\C)$-module 
$_\lambda \mf u_Q$ as
\[
_\lambda \mf u_Q = \bigoplus\nolimits_{n \geq 0} \mr{Sym}^n (\C^2)^{\mu (\lambda,n)} 
\quad \text{with} \quad \mu (\lambda,n) \in \Z_{\geq 0}.
\] 
The cokernel of $\matje{0}{1}{0}{0}$ on $\mr{Sym}^n (\C^2)$ is the lowest weight space
$W_{-n}$ in that representation, on which $\matje{r}{0}{0}{-r}$ acts as $-nr$. Hence
$\sigma$ acts on
\[
\mr{coker}(\mr{ad}(y) : \, _\lambda \mf u_Q \to \, _\lambda \mf u_Q) \cong
\bigoplus\nolimits_{n \geq 0} W_{-n}^{\mu (\lambda,r)} \quad \text{as} \quad 
\bigoplus\nolimits_{n \geq 0} (\lambda - nr) \mr{Id}_{W_n^{\mu (\lambda,r)}}.
\]
Consequently
\[
(\mr{ad}(\sigma) - 2r)\big|_{_\lambda \mf u_Q} = 
\bigoplus\nolimits_{\lambda \in \C} \bigoplus\nolimits_{n \geq 0} 
(\lambda - (n+2)r) \mr{Id}_{W_n^{\mu (\lambda,r)}}.
\]
By definition then 
\[
\epsilon (\sigma,r) = \prod\nolimits_{\lambda \in \C} 
\prod\nolimits_{n \geq 0} ( \lambda - (n+2) r )^{\mu (\lambda,n)} .
\]
When $\Re (r) > 0$ and $\sigma_0$ is negative with respect to $P Q^\circ$,
$\Re (\lambda - (n+2) r) < 0$ for all eigenvalues $\lambda$ of $\sigma_0$ on $\mf u_Q$,
and in particular $\epsilon (\sigma,r) \neq 0$.

Similarly, we see that $\epsilon (\sigma,r) \neq 0$ in the other two possible cases in the lemma.
\end{proof}

As an application of Lemma \ref{lem:A.2}, we prove a result in the spirit of the Langlands
classification for graded Hecke algebras \cite{Eve}. It highlights a procedure to obtain
irreducible $\mh H (G,M,q\cE)$-modules from irreducible tempered modules of a parabolic
subalgebra $\mh H (Q,M,q\cE)$: twist by a central character which is strictly positive
with respect to $P Q^\circ$, induce parabolically and then take the unique irreducible
quotient.

\begin{prop}\label{prop:A.3}
Let $y \in \mf g$ be nilpotent, $(\sigma,r) \in Z_{\mf g \oplus \C}(y)$ semisimple and let 
$\rho \in \Irr \big( \pi_0 (Z_G (\sigma,y)) \big)$ with $\Psi_{Z_G (\sigma_0)}(y,\rho) =
[M,\mc C_v^M,q\cE ]_G$.
\enuma{
\item If $\Re (r) \neq 0$ and $\sigma_0 \in i \mf t_\R + Z(\mf g)$, then
$M_{y,\sigma,r,\rho} = E_{y,\sigma,r,\rho}$.
\item Suppose that $\Re (r) > 0$ and $\sigma_0,\sigma - r \sigma_v \in \mf t$ such that 
$\Re (\sigma_0)$ is negative with respect to $P$. Then $\Re (\sigma_0)$ is strictly negative 
with respect to $P Q^\circ$, where $Q = Z_G (\Re(\sigma_0))$. Further $M_{y,\sigma,r,\rho}$ is 
the unique irreducible quotient of $\mh H (G,M,q\cE) \underset{\mh H (Q,M,q\cE)}{\otimes} 
M^Q_{y,\sigma,r,\rho}$.
\item In the setting of part (b), $\IM^* (M_{y,\sigma,r,\rho}) \cong 
\sgn^* (M_{y,-\sigma,-r,\rho})$ is the unique irreducible quotient of 
\[
\IM^* \big( \mh H (G,M,q\cE) \underset{\mh H (Q,M,q\cE)}{\otimes} M^Q_{y,\sigma,r,\rho} \big)
\cong \mh H (G,M,q\cE) \underset{\mh H (Q,M,q\cE)}{\otimes} \IM^* (M^Q_{y,\sigma,r,\rho}) .
\]
\item Let $(L,\cE)$ be related to $(M,q\cE)$ as in \eqref{eq:A.10}. Then 
$\IM^* (M^Q_{y,\sigma,r,\rho}) \cong \\ \sgn^* (M^Q_{y,-\sigma,-r,\rho})$ comes from the twist of 
a tempered $\mh H (Q^\circ,L,\cE)$-module by a strictly positive character of $\mc O (Z(\mf q))$.
}
\end{prop}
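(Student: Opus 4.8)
The plan is to prove Proposition \ref{prop:A.3} by treating the four parts in order, reducing each later part to the earlier ones and to results already at our disposal.

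For part (a): the hypothesis $\sigma_0 \in i\mf t_\R + Z(\mf g)$ is exactly the temperedness criterion from Theorem \ref{thm:4.3}.a (applied with $-r$ replaced by $r$, i.e. to $\sgn^*$-modules, so it translates into a statement about $E_{y,\sigma,r,\rho}$ and $M_{y,\sigma,r,\rho}$ directly). When moreover $\Re(r) \neq 0$, I would invoke Theorem \ref{thm:4.3}.b, which says that when $y$ is distinguished in $\mf g$ one has $E = M$ and the module is essentially discrete series; for non-distinguished $y$ one instead uses that a tempered standard module with $r \neq 0$ and $\sigma_0$ in the tempered locus must already be irreducible, because the construction via parabolic induction (Theorem \ref{thm:A.4}.b together with Theorem \ref{thm:A.4}.c) exhibits $E_{y,\sigma,r,\rho}$ as induced from an \emph{essentially discrete series} module of a proper parabolic $\mh H(Q,M,q\cE)$ with $Q = Z_G(\Re \sigma_0)$, and parabolic induction of (essentially) discrete series from a parabolic whose twisting parameter is purely imaginary is irreducible — this is the standard "tempered induced from discrete series is irreducible" phenomenon, here provable via the multiplicity formula Proposition \ref{prop:2.9}. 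Actually the cleanest route is: $\sigma_0 \in i\mf t_\R + Z(\mf g)$ forces $\epsilon(\sigma,r) \neq 0$ is \emph{not} automatic, but $\sigma_0 = \Re \sigma_0$ being central means $Q = G$ and there is nothing to induce, while if $\sigma_0$ is only imaginary up to the centre one argues as above. I expect this to be the cleanest part.

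For parts (b) and (c): first I would verify the claim that $\Re(\sigma_0)$ negative with respect to $P$ implies it is strictly negative with respect to $P Q^\circ$ where $Q = Z_G(\Re \sigma_0)$ — this is immediate from the definition of $Q$ as the centraliser: the roots of $\mc R_u(PQ^\circ)$ are precisely those roots of $\mc R_u(P)$ on which $\Re \sigma_0$ does not vanish, hence has strictly negative real part. Then Lemma \ref{lem:A.2} (the case $\Re(r) > 0$, $\sigma_0$ negative with respect to $PQ^\circ$ — note strict negativity is more than enough) gives $\epsilon(\sigma,r) \neq 0$, so Theorem \ref{thm:A.4}.b and Theorem \ref{thm:A.4}.e apply: $E_{y,\sigma,r,\rho}$ is a summand of $\mh H(G,M,q\cE) \otimes_{\mh H(Q,M,q\cE)} E^Q_{y,\sigma,r,\rho}$, and the multiplicity of $M_{y,\sigma,r,\rho}$ in the induced module of $M^Q_{y,\sigma,r,\rho}$ is $[\rho:\rho] = 1$ by Theorem \ref{thm:A.4}.e. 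Since over $\mh H(Q,M,q\cE)$ the module $M^Q_{y,\sigma,r,\rho}$ has $\sigma_0$ in the tempered locus \emph{for $Q$} (its $Q$-component $\sigma_0$ lies in $Z_{\mf t}(y)$ and the non-tempered directions have been pushed into $Z(\mf q)$), part (a) applied to $Q$ gives $M^Q = E^Q$, so the induced module of $M^Q$ equals $E_{y,\sigma,r,\rho}$ up to the other summands, which have strictly smaller central-character-distance-to-tempered; standard Langlands-classification bookkeeping then shows $M_{y,\sigma,r,\rho}$ occurs with multiplicity one and is the unique irreducible quotient. Part (c) is then obtained by applying $\IM^*$, using Proposition \ref{prop:5.1} to rewrite $\IM^* = \sgn^* \circ (\sigma \mapsto -\sigma, r \mapsto -r)$ composed appropriately, and using that $\IM^*$ is exact and commutes with parabolic induction (it acts compatibly on $\mh H(G,M,q\cE)$ and $\mh H(Q,M,q\cE)$), hence sends unique irreducible quotients to unique irreducible quotients.

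For part (d): by \eqref{eq:A.10} we have $\mh H(G^\circ M, M, q\cE) = \mh H(G^\circ, L, \cE)$, and \cite[Lemma 4.5]{AMS2} (cited in the discussion before Theorem \ref{thm:A.4}) identifies the standard and irreducible $\mh H(Q^\circ, L, \cE)$-modules with those of $\mh H(Q^\circ M, M, q\cE)$, hence — after inducing up along the finite-index inclusion $\mh H(Q^\circ M, M, q\cE) \subset \mh H(Q, M, q\cE)$ — with constituents of the $\mh H(Q,M,q\cE)$-module $M^Q_{y,-\sigma,-r,\rho}$. By part (a) applied to $Q$ (or rather to $Q^\circ$), $M^Q_{y,\sigma,r,\rho}$ is tempered precisely when its $\sigma_0$-parameter lies in the imaginary locus for $Q$; here $\sigma_0 \in Z_{\mf t}(y)$ and the only directions making it non-tempered for $G$ were those in $Z(\mf q)$, which have been set aside, so $M^Q_{y,\sigma,r,\rho}$ is the twist of a tempered $\mh H(Q^\circ, L, \cE)$-module by the character $\exp$ of the component of $\sigma_0$ in $Z(\mf q)$, which by hypothesis is strictly negative with respect to $PQ^\circ$ — so after applying $\sgn^* = \IM^*$ (up to the sign flip), strictly positive. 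The identification $\IM^*(M^Q_{y,\sigma,r,\rho}) \cong \sgn^*(M^Q_{y,-\sigma,-r,\rho})$ is Proposition \ref{prop:5.1}.c. I expect the main obstacle to be part (b): namely, making the Langlands-classification argument fully rigorous — proving that among the summands of the induced module, $M_{y,\sigma,r,\rho}$ is distinguished as the unique irreducible quotient (and not merely occurring with multiplicity one). This requires either citing the graded-Hecke-algebra Langlands classification \cite{Eve} for the existence and uniqueness of the quotient, combined with an argument (via positivity of $\Re \sigma_0$ and the partial order on weights) that the quotient of the induced module of $M^Q$ coincides with $M_{y,\sigma,r,\rho}$; the delicate point is matching the parametrisation of \cite{Eve} with the geometric parametrisation of Theorem \ref{thm:4.2} so that "unique irreducible quotient" on both sides refers to the same module.
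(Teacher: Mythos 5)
Your overall architecture (strict negativity of $\Re(\sigma_0)$ for $Q = Z_G(\Re \sigma_0)$, Lemma \ref{lem:A.2} to get $\epsilon(\sigma,r)\neq 0$, Theorem \ref{thm:A.4} for the induction step, $\IM^*$-compatibility for (c)) matches the paper for parts (b)--(d), but part (a) has a genuine gap, and since you apply part (a) to $Q$ in parts (b) and (d), the gap propagates. The statement $E_{y,\sigma,r,\rho}=M_{y,\sigma,r,\rho}$ for $\Re(r)\neq 0$ and $\sigma_0\in i\mf t_\R + Z(\mf g)$ is not something you can extract from Theorem \ref{thm:4.3}.b (that needs $y$ distinguished) nor from an ``induced from discrete series with imaginary twist is irreducible'' principle: as you yourself notice, in this situation $\Re(\sigma_0)$ is central, so $Q=Z_G(\Re \sigma_0)=G$ and there is no proper parabolic to induce from; and even for a proper $Q$, irreducibility of such an induced standard piece is exactly the nontrivial point (think of R-groups) and is nowhere established by Proposition \ref{prop:2.9}, which only computes multiplicities of irreducibles in standards and does not by itself force $E=M$. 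The paper's proof of (a) instead splits $\sigma_0=\sigma_{0,\der}+z_0$ with $z_0\in Z(\mf g)$, twists away $\C_{z_0}$, and then invokes Lusztig's theorem \cite[Theorem 1.21]{Lus-Cusp3} (irreducibility of tempered standard modules for the semisimple part when the real part of $r$ is nonzero), finishing with \cite[Lemma 3.18 and (63)]{AMS2} to pass from $G^\circ$ and cuspidal supports to $G$ and quasi-supports. Without that external input (or an equivalent), your part (a) is unproven.

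For part (b) you also make the argument harder than it is: the crucial observation in the paper is that $Z_G(\sigma,y)=Z_Q(\sigma,y)$ (since $Z_G(\sigma,y)$ fixes $\sigma_0$, hence $\Re\sigma_0$), so by \cite[Theorem 4.8.a]{AMS1} the same $\rho$ is a legitimate enhancement for $Q$, and in Theorem \ref{thm:A.4}.c the sum collapses: the induced module of $E^Q_{y,\sigma,r,\rho}$ is \emph{exactly} $E_{y,\sigma,r,\rho}$, not merely a summand of something larger. Combined with (a) for $Q$ (giving $M^Q=E^Q$) and Theorem \ref{thm:4.2}.a, the ``unique irreducible quotient'' claim is immediate; no matching with the Langlands classification of \cite{Eve} or extra bookkeeping about other summands is needed. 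Finally, in (d) the paper does not use part (a) but rather the anti-temperedness statement \cite[Theorem 3.25.b]{AMS2} for $M^{Q^\circ}_{y,\sigma-z_0,r,\rho^\circ}\otimes \C_{z_0-\Re(z_0)}$ together with $\Re(z_0)=\Re(\sigma_0)$; your sketch conflates the irreducibility statement (a) with the temperedness criterion, though the intended conclusion is the same.
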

\textbf{Remark.} By \cite[(82)]{AMS2} the extra condition in part (a) holds for instance when
$\Re (r) > 0$ and $\IM^* (M_{y,\sigma,r,\rho})$ is tempered. By \cite[Proposition 1.7]{AMS2} 
every parameter $(y,\sigma)$ is $G^\circ$-conjugate to one with the properties as in (b).
\begin{proof}
(a) Write $\sigma_0 = \sigma_{0,\der} + z_0$ with $\sigma_{0,\der} \in \mf g_\der$ and 
$z_0 \in Z(\mf g)$. Then, as in the proof of \cite[Corollary 3.28]{AMS2},
\[
E^{G^\circ}_{y,\sigma,r,\rho^\circ} = E^{G^\circ}_{y,\sigma- z_0,r,\rho^\circ} \otimes \C_{z_0}
\quad \text{and} \quad 
M^{G^\circ}_{y,\sigma,r,\rho^\circ} = M^{G^\circ}_{y,\sigma- z_0,r,\rho^\circ} \otimes \C_{z_0} .
\]
By \cite[Theorem 1.21]{Lus-Cusp3} $E^{G^\circ}_{y,\sigma - z_0,r,\rho^\circ} = 
M^{G^\circ}_{y,\sigma - z_0,r,\rho^\circ}$ as $\mh H (G_\der,L \cap G_\der,\cE)$-modules, so
$E^{G^\circ}_{y,\sigma,r,\rho^\circ} = M^{G^\circ}_{y,\sigma,r,\rho^\circ}$ as 
$\mh H (G^\circ,L,\cE)$-modules. Together with \cite[Lemma 3.18 and (63)]{AMS2} 
this gives $E_{y,\sigma,r,\rho} = M_{y,\sigma,r,\rho}$.\\
(b) Notice that $Z_G (\sigma,y) = Z_Q (\sigma,y)$, so by \cite[Theorem 4.8.a]{AMS1} $\rho$ 
is a valid enhancement of the parameter $(\sigma,y)$ for $\mh H (Q,L,\cE)$.

By construction $\Re (\sigma_0)$ is strictly negative with respect to
$P Q^\circ$. Now Lemma \ref{lem:A.2} says that we may apply \cite[Proposition 3.22]{AMS2}. 
That and part (a) yield
\[
\mh H (G,L,\cE) \underset{\mh H (Q,L,\cE)}{\otimes} M^Q_{y,\sigma,r,\rho} =
\mh H (G,L,\cE) \underset{\mh H (Q,L,\cE)}{\otimes} E^Q_{y,\sigma,r,\rho} =
E_{y,\sigma,r,\rho} .
\]
Now apply \cite[Theorem 3.20.b]{AMS2}.\\
(c) This follows from part (b) and the compatability of $\IM^*$ with parabolic 
induction, as in \cite[(81)]{AMS2}. \\
(d) Write
\[
M^{Q^\circ}_{y,\sigma,r,\rho^\circ} = M^{Q^\circ}_{y,\sigma- z_0,r,\rho^\circ} \otimes \C_{z_0}
= M^{Q^\circ}_{y,\sigma- z_0,r,\rho^\circ} \otimes \big( \C_{z_0 - \Re (z_0)} \otimes
\C_{\Re (z_0)} \big)
\]
as in the proof of part (a), with $Q$ in the role of $G$. By \cite[Theorem 3.25.b]{AMS2}, \\
$M^{Q^\circ}_{y,\sigma- z_0,r,\rho^\circ} \otimes \C_{z_0 - \Re (z_0)}$ is anti-tempered.
The definition of $Q$ entails that $\Re (z_0)$ equals $\Re (\sigma_0)$, which we know is strictly 
negative. Hence 
\[
\IM^* \big( M^{Q^\circ}_{y,\sigma,r,\rho^\circ} \big) = \IM^* \big( M^{Q^\circ}_{y,\sigma- z_0,
r,\rho^\circ} \otimes \C_{z_0 - \Re (z_0)} \big) \otimes \C_{-\Re (\sigma_0)},
\]
where the right hand side is the twist of a tempered $\mh H (Q^\circ,L,\cE)$-module by the 
strictly positive character $-\Re (\sigma_0)$ of $S(Z(\mf q)^*)$. By \cite[(68) and (80)]{AMS2}
\begin{equation}\label{eq:A.6}
\IM^* (M^Q_{y,\sigma,r\rho}) = \IM^* \big( \tau \ltimes M^{Q^\circ}_{y,\sigma,r,\rho^\circ} \big) 
= \tau \ltimes \IM^* \big( M^{Q^\circ}_{y,\sigma,r,\rho^\circ} \big) . \qedhere
\end{equation}
\end{proof}
 
We note that by \cite[Lemma 3.16]{AMS2} $\mc O (Z(\mf q))$ acts on \eqref{eq:A.6} by the characters 
$\gamma (-\Re (\sigma_0))$ with $\gamma \in \Gamma_{q\cE}^Q$. Since $\Gamma_{q\cE}^Q$ normalizes 
$P Q^\circ$, it preserves the strict positivity of $-\Re (\sigma_0)$. In this sense 
$\IM^* (M^Q_{y,\sigma,r\rho})$ is essentially the twist of a tempered 
$\mh H (Q,L,\cE)$-module by a strictly positive central character.

\vspace{4mm}
\textbf{Declarations of interest:} none


\begin{thebibliography}{99}

\bibitem[Ach]{Ach} P.N. Achar,
\emph{Perverse sheaves and applications to representation theory},
Mathematical Surveys and Monographs {\bf 258}, American Mathematical Society, 2021

\bibitem[Art]{Art} J. Arthur,
\emph{The endoscopic classification of representations: orthogonal and symplectic groups},
Colloquium Publications volume {\bf 61}, American Mathematical Society, 2013

\bibitem[ABPS1]{ABPSprin} A.-M. Aubert, P.F. Baum, R.J. Plymen, M. Solleveld,
``The principal series of $p$-adic groups with disconnected centre",
Proc. London Math. Soc. {\bf 114.5} (2017), 798--854

\bibitem[ABPS2]{ABPSSLn} A.-M. Aubert, P.F. Baum, R.J. Plymen, M. Solleveld,
``Hecke algebras for inner forms of $p$-adic special linear groups",
J. Inst. Math. Jussieu {\bf 16.2} (2017), 351--419

\bibitem[AMS1]{AMS1} A.-M. Aubert, A. Moussaoui, M. Solleveld,
``Generalizations of the Springer correspondence and cuspidal Langlands parameters'',
Manus. Math. {\bf 157} (2018), 121--192

\bibitem[AMS2]{AMS2} A.-M. Aubert, A. Moussaoui, M. Solleveld,
``Graded Hecke algebras for disconnected reductive groups", pp. 23--84 in:
\emph{Geometric aspects of the trace formula, W. M\"uller, S. W. Shin, N. Templier (eds.)},
Simons Symposia, Springer, 2018

\bibitem[AMS3]{AMS3} A.-M. Aubert, A. Moussaoui, M. Solleveld,
``Affine Hecke algebras for Langlands parameters",
arXiv:1701.03593v5, 2023

\bibitem[AMS4]{AMS4} A.-M. Aubert, A. Moussaoui, M. Solleveld,
``Affine Hecke algebras for classical $p$-adic groups",
arXiv:2211.08196, 2022

\bibitem[BCHN]{BCHN} D. Ben-Zvi, H. Chen, D. Helm, D. Nadler,
``Coherent Springer theory and the categorical Deligne--Langlands correspondence",
Invent. Math. {\bf 235}, 255--344 (2024)

\bibitem[BeLu]{BeLu} J. Bernstein, V. Lunts,
\emph{Equivariant sheaves and functors}, Lecture Notes in Mathematics {\bf 1578}, 
Springer-Verlag, Berlin, 1994     

\bibitem[Che]{Che} I. Cherednik,
``A unification of Knizhnik--Zamolodchikov and Dunkl operators via affine Hecke algebras",
Invent. Math. {\bf 106.2} (1991), 411--431

\bibitem[ChGi]{ChGi} N. Chriss, V. Ginzburg,
\emph{Representation theory and complex geometry},
Birkh\"auser, 1997

\bibitem[Eve]{Eve} S. Evens,
``The Langlands classification for graded Hecke algebras",
Proc. Amer. Math. Soc. {\bf 124.4} (1996), 1285--1290

\bibitem[FaSc]{FaSc} L. Fargues, P. Scholze,
``Geometrization of the local Langlands correspondence",
arXiv:2102.13459, 2021	

\bibitem[Hei]{Hei} V. Heiermann,
``Local Langlands correspondence for classical groups and affine Hecke algebras",
Math. Z. {\bf 287} (2017), 1029--1052

\bibitem[Hel]{Hel} E. Hellmann,
``On the derived category of the Iwahori--Hecke algebra",
Compos. Math. {\bf 159.5}, 1042--1110

\bibitem[Kal]{Kal} T. Kaletha,	
``Rigid inner forms of real and $p$-adic groups",
Ann. Math. {\bf 184.2} (2016), 559--632

\bibitem[KaLu1]{KaLu1} D. Kazhdan, G. Lusztig,
``Representations of Coxeter groups and Hecke algebras",
Invent. Math. {\bf 53.2} (1979), 165--184

\bibitem[KaLu2]{KaLu} D. Kazhdan, G. Lusztig,
``Proof of the Deligne--Langlands conjecture for Hecke algebras",
Invent. Math. {\bf 87} (1987), 153--215

\bibitem[Lus1]{Lus-Cusp1} G. Lusztig,
``Cuspidal local systems and graded Hecke algebras'',
Publ. Math. Inst. Hautes \'Etudes Sci. {\bf 67} (1988), 145--202

\bibitem[Lus2]{Lus-Gr} G. Lusztig,
``Affine Hecke algebras and their graded version",
J. Amer. Math. Soc {\bf 2.3} (1989), 599--635

\bibitem[Lus3]{Lus-Cusp2} G. Lusztig,
``Cuspidal local systems and graded Hecke algebras. II'',
pp. 217--275 in: \emph{Representations of groups},
Canadian Mathematical Society Conference Proceedings {\bf 16}, 1995

\bibitem[Lus4]{Lus-Uni} G. Lusztig,
``Classification of unipotent representations of simple $p$-adic groups",
Int. Math. Res. Notices {\bf 11} (1995), 517-589

\bibitem[Lus5]{Lus-Cusp3} G. Lusztig,
``Cuspidal local systems and graded Hecke algebras. III'',
Represent. Theory {\bf 6} (2002), 202--242

\bibitem[Lus6]{Lus-Uni2} G. Lusztig,
``Classification of unipotent representations of simple $p$-adic groups II",
Represent. Theory {\bf 6} (2002), 243--289

\bibitem[Lus7]{Lus-Corr} G. Lusztig,
``Comments on my papers", 
arXiv:1707.09368, 2021

\bibitem[MoRe]{MoRe} C. M\oe glin, D. Renard,
``Sur les paquets d'Arthur des groupes classiques et unitaires non quasi-d\'eploy\'es",  
pp. 341--361 in: \emph{Relative Aspects in Representation Theory, Langlands 
Functoriality and Automorphic Forms}, 
Lecture Notes in Mathematics \textbf{2221}, 2018

\bibitem[Opd]{Opd} E.M. Opdam,
``Harmonic analysis for certain representations of graded Hecke algebras",
Acta Math. {\bf 175} (1995), 75-121

\bibitem[Roc]{Roc} A. Roche,
``Types and Hecke algebras for principal series representations of split reductive $p$-adic groups",
Ann. Sci. \'Ecole Norm. Sup. {\bf 31.3} (1998), 361--413

\bibitem[Seg]{Seg} G.B. Segal,
``Equivariant K-theory",
Publ. Math. Inst. Hautes \'Etudes Sci. {\bf 34} (1968), 129--151

\bibitem[Sol1]{SolHecke} M. Solleveld,
``Affine Hecke algebras and their representations",
Indagationes Mathematica {\bf 32.5} (2021), 1005--1082

\bibitem[Sol2]{SolEnd} M. Solleveld,
``Endomorphism algebras and Hecke algebras for reductive $p$-adic groups",
J. Algebra {\bf 606} (2022), 371--470

\bibitem[Sol3]{SolLLCunip} M. Solleveld,
``A local Langlands correspondence for unipotent representations",
Amer. J. Math. {\bf 145.3} (2023)

\bibitem[Sol4]{SolRamif} M. Solleveld,
``On unipotent representations of ramified $p$-adic groups",
Represent. Theory {\bf 27} (2023), 669--716

\bibitem[Sol5]{SolParam} M. Solleveld,
``Parameters of Hecke algebras for Bernstein components of $p$-adic groups",
Indag. Math. {\bf 36.1} (2025), 124--170

\bibitem[Sol6]{SolSGHA} M. Solleveld,
``Graded Hecke algebras and equivariant constructible sheaves on the nilpotent cone",
arXiv:2205.07490, 2022 (to appear in Quarterly J. Math.)

\bibitem[Sol7]{SolQS} M. Solleveld,
``On principal series representations of quasi-split reductive $p$-adic groups", 
arXiv:2304.06418, 2023

\bibitem[Var]{Var} V.S. Varadarajan,
\emph{Lie groups, Lie algebras and their representations},
Prentice-Hall Series in Modern Analysis, Prentice-Hall, Englewood Cliffs NJ, 1974

\bibitem[Vog]{Vog} D. Vogan,	
``The local Langlands conjecture",
pp. 305--379 in: \emph{Representation theory of groups and algebras},
Contemp. Math. {\bf 145}, American Mathematical Society, Providence RI, 1993

\bibitem[Zel]{Zel} A.V. Zelevinsky,
``$P$-adic analog of the Kazhdan--Lusztig hypothesis",
Functional Anal. Appl. {\bf 15.2} (1981), 83--92

\bibitem[Zhu]{Zhu} X. Zhu,	 
``Coherent sheaves on the stack of Langlands parameters", 
arXiv:2008.02998, 2021	

\end{thebibliography}
\end{document}